\newtheorem*{theorem*}{Theorem}
\newtheorem{lemma}{Lemma}[subsection]
\newtheorem{proposition}[lemma]{Proposition}
\newtheorem{remark}[lemma]{Remark}
\newtheorem{theorem}[lemma]{Theorem}
\newtheorem{definition}[lemma]{Definition}
\newtheorem{notation}[lemma]{Notation}
\newtheorem{corollary}[lemma]{Corollary}
\sloppy \theoremstyle{plain}
\newcommand{\tr}{\operatorname{Tr}}
\newcommand{\cc}{\mathbb{C}}
\newcommand{\eps}{\varepsilon}
\newcommand{\Z}{{\mathbb Z}}
\newcommand{\C}{{\mathbb C}}
\newcommand{\Fre}{{Fr\'{e}chet \,}}
\newcommand{\g}{{\mathfrak{g}}}
\newcommand{\h}{{\mathfrak{h}}}
\newcommand{\gd}{\g^{\sigma}}
\newcommand{\opi}{{\overline{\pi}}}
\newcommand{\de}{def}
\begin{document}

\author{Avraham Aizenbud}
\address{Avraham Aizenbud and Dmitry Gourevitch, Faculty of Mathematics
and Computer Science, The Weizmann Institute of Science POB 26,
Rehovot 76100, ISRAEL.} \email{aizenr@yahoo.com}
\author{Dmitry Gourevitch}
\email{guredim@yahoo.com}
\title{Some regular symmetric pairs}
\begin{abstract}
In \cite{AG2} we explored the question what symmetric pairs are
Gelfand pairs. We introduced the notion of regular symmetric pair
and conjectured that all symmetric pairs are regular. This
conjecture would imply that many symmetric pairs are Gelfand
pairs, including all connected symmetric pair over $\C$.

In this paper we show that the pairs $$(GL(V),O(V)), \,
(GL(V),U(V)), \, (U(V),O(V)), \, (O(V \oplus W),O(V) \times O(W)),
\, (U(V \oplus W),U(V) \times U(W))$$ are regular where $V$ and
$W$ are quadratic or hermitian spaces over arbitrary local field
of characteristic zero. We deduce from this that the pairs
$(GL_n(\C),O_n(\C))$ and $(O_{n+m}(\C),O_n(\C) \times O_m(\C))$
are Gelfand pairs.
\end{abstract}


%
%
%
%
%
%

\keywords{Uniqueness, multiplicity one, Gelfand pair, symmetric
pair, unitary group, orthogonal group.\\ \indent MSC Classes:
20G05, 20G25, 22E45}

\maketitle
 \tableofcontents

\section{Introduction}
In \cite{AG2} we explored the question what symmetric pairs are
Gelfand pairs. We introduced the notion of regular symmetric pair
and conjectured that all symmetric pairs are regular. This
conjecture would imply that many symmetric pairs are Gelfand
pairs, including all connected symmetric pair over $\C$.

In this paper we show that the pairs $$(GL(V),O(V)), \,
(GL(V),U(V)), \, (U(V),O(V)), \, (O(V \oplus W),O(V) \times O(W)),
\, (U(V \oplus W),U(V) \times U(W))$$ are regular where $V$ and
$W$ are quadratic or hermitian spaces over arbitrary local field
of characteristic zero. We deduce from this that the pairs
$(GL_n(\C),O_n(\C))$ and $(O_{n+m}(\C),O_n(\C) \times O_m(\C))$
are Gelfand pairs.

In general, if we would know that all symmetric pairs are regular,
then in order to show that a given symmetric pair $(G,H)$ is a
Gelfand pair it would be enough to check the following condition
that we called "goodness":\\
(*) Every closed $H$-double coset in $G$ is invariant with respect
to $\sigma$. Here, $\sigma$ is the anti-involution defined by
$\sigma(g):= \theta(g^{-1})$ and $\theta$ is an involution (i.e.
automorphism of order 2) of $G$ such that $H = G^{\theta}$.

This condition always holds for connected symmetric pairs over
$\C$.

Meanwhile, before the conjecture is proven, in order to show that
a given symmetric pair is a Gelfand pair one has to verify that
the pair is good, to prove that it is regular and also to compute
its "descendants" and show that they are also regular. The
"descendants" are certain symmetric pairs related to centralizers
of semisimple elements.

In this paper we develop further the tools from \cite{AG2} 
for proving regularity of symmetric pairs. We also
introduce a systematic way to compute descendants of classical
symmetric pairs.

Based on that we show that all the descendants of the above
symmetric pairs are regular.

\subsection{Structure of the paper} $ $\\
In section \ref{PrelNot} we introduce the notions that we discuss
in this paper. In subsection \ref{GelPairs} we discuss the notion
of Gelfand pair and review a classical technique for proving
Gelfand property due to Gelfand and Kazhdan. In subsection
\ref{SymPar} we review the results of \cite{AG2}, introduce the
notions of symmetric pair, descendants of a symmetric pair, good
symmetric pair and regular symmetric pair mentioned above and
discuss their relations to Gelfand property.

In section \ref{MainRes} we formulate the main results of the
paper. We also explain how they follow from the rest of the paper.

In section \ref{GradRepDef} we introduce terminology that enables
us to prove regularity for symmetric pairs in question.

In section \ref{SecReg} we prove regularity for symmetric pairs in
question. 

In section \ref{CompDes} we compute the descendants of those
symmetric pairs.

\subsection{Acknowledgements}
We are grateful to \textbf{Herve Jacquet} for a suggestion to
consider the pair $(U_{2n}, U_n \times U_n)$ which inspired this
paper. We also thank \textbf{Joseph Bernstein}, \textbf{Erez
Lapid}, \textbf{Eitan Sayag} and \textbf{Lei Zhang} for fruitful
discussions and \textbf{Gerard Schiffmann} for  useful remarks.

Both authors were partially supported by a BSF grant, a GIF grant, and an ISF Center
of excellency grant. A.A was also supported by ISF grant No. 583/09 and 
D.G. by NSF grant DMS-0635607. Any opinions, findings and conclusions or recommendations expressed in this material are those of the authors and do not necessarily reflect the views of the National Science Foundation.

\section{Preliminaries and notations} \label{PrelNot}
\setcounter{lemma}{0}
\begin{itemize}
\item Throughout the paper we fix an arbitrary local field $F$ of characteristic zero.
\item All the algebraic varieties and algebraic
groups that we will consider will be defined over $F$.
\item For a group $G$ acting on a set $X$ and an element $x \in X$
we denote by $G_x$ the stabilizer of $x$.
\item By a reductive group we mean an algebraic reductive group.
\end{itemize}

In this paper we will refer to distributions on algebraic
varieties over archimedean and non-archimedean fields. In the
non-archimedean case we mean the notion of distributions on
$l$-spaces from \cite{BZ}, that is linear functionals on the space
of locally constant compactly supported functions. In the
archimedean case one can consider the usual notion of
distributions, that is continuous functionals on the space of
smooth compactly supported functions, or the notion of Schwartz
distributions (see e.g. \cite{AG1}). It does not matter here which
notion to choose since in the cases of consideration of this
paper, if there are no nonzero equivariant Schwartz distributions
then there are no nonzero equivariant distributions at all (see
Theorem 4.0.2 in \cite{AG2}).

\begin{notation}
Let $E$ be an extension of $F$. Let $G$ be an algebraic group
defined over $F$. We denote by $G_{E/F}$ the canonical algebraic
group defined over $F$ such that $G_{E/F}(F)=G(E)$.
\end{notation}

\subsection{Gelfand pairs} \label{GelPairs}
$ $\\
In this section we recall a technique due to Gelfand and Kazhdan
(\cite{GK}) which allows to deduce statements in representation
theory from statements on invariant distributions. For more
detailed description see \cite{AGS}, section 2.

\begin{definition}
Let $G$ be a reductive group. By an \textbf{admissible
representation of} $G$ we mean an admissible representation of
$G(F)$ if $F$ is non-archimedean (see \cite{BZ}) and admissible
smooth \Fre representation of $G(F)$ if $F$ is archimedean.
\end{definition}

We now introduce three notions of Gelfand pair.

\begin{definition}\label{GPs}
Let $H \subset G$ be a pair of reductive groups.
\begin{itemize}
\item We say that $(G,H)$ satisfy {\bf GP1} if for any irreducible
admissible representation $(\pi,E)$ of $G$ we have
$$\dim Hom_{H(F)}(E,\cc) \leq 1$$

\item We say that $(G,H)$ satisfy {\bf GP2} if for any irreducible
admissible representation $(\pi,E)$ of $G$ we have
$$\dim Hom_{H(F)}(E,\cc) \cdot \dim Hom_{H}(\widetilde{E},\cc)\leq
1$$

\item We say that $(G,H)$ satisfy {\bf GP3} if for any irreducible
{\bf unitary} representation $(\pi,\mathcal{H})$ of $G(F)$ on a
Hilbert space $\mathcal{H}$ we have
$$\dim Hom_{H(F)}(\mathcal{H}^{\infty},\cc) \leq 1.$$
\end{itemize}

\end{definition}
Property GP1 was established by Gelfand and Kazhdan in certain
$p$-adic cases (see \cite{GK}). Property GP2 was introduced in
\cite{Gross} in the $p$-adic setting. Property GP3 was studied
extensively by various authors under the name {\bf generalized
Gelfand pair} both in the real and $p$-adic settings (see e.g.
\cite{vD,Bos-vD}).

We have the following straightforward proposition.

\begin{proposition}
$GP1 \Rightarrow GP2 \Rightarrow GP3.$
\end{proposition}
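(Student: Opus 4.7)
The plan is to prove the two implications separately, both of which reduce to relating a representation to its contragredient.

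For $GP1 \Rightarrow GP2$, I would simply observe that if $(\pi,E)$ is an irreducible admissible representation then so is $(\widetilde{\pi}, \widetilde{E})$. Applying $GP1$ to each of $E$ and $\widetilde E$ yields $\dim Hom_{H(F)}(E,\cc) \leq 1$ and $\dim Hom_{H(F)}(\widetilde E,\cc)\leq 1$, so their product is bounded by $1$. The only thing to check here is that the paper's notion of admissibility is stable under passing to the contragredient, which is standard in both the $p$-adic case (\cite{BZ}) and the admissible smooth \Fre case over $\rr$.

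For $GP2 \Rightarrow GP3$, let $(\pi,\hh)$ be an irreducible unitary representation on a Hilbert space. The space of smooth vectors $\hh^{\infty}$ is an irreducible admissible representation of $G$ in the sense of the definition above; this is immediate in the non-archimedean case and is a theorem of Harish-Chandra in the archimedean case. The Hermitian inner product on $\hh$ provides a $G(F)$-equivariant \emph{anti-linear} isomorphism $\hh^{\infty} \to \widetilde{\hh^{\infty}}$, so composing with complex conjugation on the target gives
\[
\dim Hom_{H(F)}(\hh^{\infty},\cc) = \dim Hom_{H(F)}(\widetilde{\hh^{\infty}},\cc).
\]
Applying $GP2$ to $\hh^{\infty}$ then forces the common value to be at most $1$, which is $GP3$.

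I do not expect any real obstacle: the whole proposition is an exercise in unpacking definitions, and the only non-trivial ingredient is the admissibility of smooth vectors of an irreducible unitary representation in the archimedean setting, which the paper treats as background. The one point worth being careful about is the anti-linearity of the map supplied by the inner product; one must apply conjugation once to convert it into a statement about $Hom$ spaces of complex vector spaces before invoking $GP2$.
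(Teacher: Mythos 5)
Your proof is correct and is exactly the standard unpacking of the definitions that the paper has in mind (it omits the proof entirely, calling the proposition straightforward): $GP1\Rightarrow GP2$ via applying $GP1$ to $E$ and $\widetilde E$, and $GP2\Rightarrow GP3$ via admissibility of $\hh^{\infty}$ and the conjugate-linear identification $\hh^{\infty}\cong\widetilde{\hh^{\infty}}$ supplied by the inner product. The only quibble is that admissibility of irreducible unitary representations in the non-archimedean case is a theorem of Bernstein/Harish-Chandra rather than ``immediate,'' but it is standard background and does not affect the argument.
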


We will use the following theorem from \cite{AGS} which is a
version of a classical theorem of Gelfand and Kazhdan.

\begin{theorem}\label{DistCrit}
Let $H \subset G$ be reductive groups and let $\tau$ be an
involutive anti-automorphism of $G$ and assume that $\tau(H)=H$.
Suppose $\tau(\xi)=\xi$ for all bi $H(F)$-invariant distributions
$\xi$ on $G(F)$. Then $(G,H)$ satisfies GP2.
\end{theorem}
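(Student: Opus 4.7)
The plan follows the classical Gelfand--Kazhdan distribution criterion for GP2. Given an irreducible admissible $(\pi, E)$ of $G$, assume both $\Hom_{H(F)}(E, \C)$ and $\Hom_{H(F)}(\widetilde E, \C)$ are nonzero (otherwise the GP2 product vanishes trivially). Admissibility ensures that $\pi(f)$ has finite rank for every $f \in C_c^\infty(G(F))$, so $\pi(f) \in E \otimes \widetilde E$. For $\ell \in \Hom_{H(F)}(E, \C)$ and $\widetilde\ell \in \Hom_{H(F)}(\widetilde E, \C)$, set $T_{\ell,\widetilde\ell}(f) := (\ell \otimes \widetilde\ell)(\pi(f))$; this is bi-$H(F)$-invariant, giving a bilinear map
$$\Phi \colon \Hom_{H(F)}(E, \C) \otimes \Hom_{H(F)}(\widetilde E, \C) \longrightarrow \mathcal{D}(G(F))^{H(F) \times H(F)}.$$
Injectivity of $\Phi$ follows from Jacobson density: irreducibility of $\pi$ implies $\pi(C_c^\infty(G(F)))$ spans all algebraic finite-rank operators on $E$, so a nonzero $\ell \otimes \widetilde\ell$ produces a nonzero distribution.

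\textbf{Step 2 ($\tau$-action).} A direct change of variable in the defining integral identifies $\tau T_{\ell,\widetilde\ell}$ with the analogous distribution for the twisted representation $\pi^\theta$, where $\theta(g) := \tau(g^{-1})$ is the group involution underlying $\tau$, with the roles of the two Hom-spaces appropriately exchanged. Combined with the hypothesis that every bi-$H(F)$-invariant distribution is $\tau$-invariant, a Schur-type isotype-separation argument on the decomposition of bi-$H(F)$-invariant distributions then forces an isomorphism $\pi^\theta \cong \widetilde\pi$ of $G(F)$-representations; since $\theta(H) = H$, this yields a canonical identification $\Hom_{H(F)}(\widetilde E, \C) \cong \Hom_{H(F)}(E, \C)$ via $\widetilde\ell \mapsto \widetilde\ell \circ \theta$.

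\textbf{Step 3 (dimension bound).} Under the Step 2 identification, the $\tau$-action on $\operatorname{image}(\Phi)$, pulled back along $\Phi$, becomes the swap involution on $V \otimes V$ with $V := \Hom_{H(F)}(E, \C)$. The hypothesis forces this swap to act as the identity on the image, and injectivity of $\Phi$ from Step 1 promotes this to an identity on all of $V \otimes V$, i.e.\ $\Lambda^2 V = 0$, i.e.\ $\dim V \leq 1$. This yields the GP2 inequality $\dim\Hom_{H(F)}(E, \C)\cdot\dim\Hom_{H(F)}(\widetilde E, \C) \leq 1$.

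The main obstacle is Step 2: deriving $\pi^\theta \cong \widetilde\pi$ from the distribution hypothesis and pinning down the transported $\tau$-action precisely as the swap on $V \otimes V$ requires careful isotype-level bookkeeping and a correct matching of the formal $\tau$-pullback on distributions with the representation-theoretic twist by $\theta$. In the archimedean case, the cited Theorem 4.0.2 of \cite{AG2} allows the argument to proceed with Schwartz distributions without loss of generality.
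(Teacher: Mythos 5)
The paper does not actually prove Theorem \ref{DistCrit}: it is quoted from \cite{AGS} (the reader is referred to Section 2 there), so the only available comparison is with the classical Gelfand--Kazhdan argument that the cited source carries out. Your outline reproduces exactly that argument: the bilinear map $\Phi$ into bi-$H(F)$-invariant distributions, the computation that pullback by $\tau$ interchanges $\pi$ with the $\theta$-twist of $\widetilde{\pi}$ while swapping the two Hom-spaces, and the conclusion $\Lambda^2 V=0$ are the standard steps. In the non-archimedean case the skeleton is sound, with two caveats: injectivity of $\Phi$ has to be checked on the whole tensor product, not only on pure tensors (it does follow, since $f\mapsto\pi(f)$ surjects onto $E\otimes\widetilde{E}$ and $\Hom_{H(F)}(E,\C)\otimes\Hom_{H(F)}(\widetilde{E},\C)$ embeds in $(E\otimes\widetilde{E})^*$); and the ``Schur-type isotype-separation'' of Step 2 is where the real work sits --- it amounts to applying Jacobson density to $\pi\oplus\widetilde{\pi}^{\theta}$ to see that a nonzero generalized matrix-coefficient distribution of $\pi$ cannot coincide with one of a non-isomorphic irreducible. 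You flag this step as the main obstacle but do not supply it.

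The genuine gap is the archimedean case, which the theorem as stated (and the paper's applications, e.g.\ $F=\C$ in the corollaries of Section \ref{MainRes}) requires. For archimedean $F$ and $f\in C_c^{\infty}(G(F))$, the operator $\pi(f)$ is \emph{not} of finite rank, the map $f\mapsto\pi(f)$ does not surject onto $E\otimes\widetilde{E}$, and Jacobson density is unavailable; hence both the injectivity of $\Phi$ in Step 1 and the isotype separation in Step 2 lose their stated justifications. This is precisely the difficulty that \cite{AGS} addresses in the archimedean version of the Gelfand--Kazhdan criterion, working with admissible smooth Fr\'echet (Casselman--Wallach) representations and replacing finite-rank surjectivity by density and continuity arguments for generalized matrix coefficients. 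Your appeal to Theorem 4.0.2 of \cite{AG2} does not repair this: that result only allows one to pass from Schwartz distributions to arbitrary distributions and is orthogonal to the finite-rank issue. As written, your argument proves the theorem only for non-archimedean $F$.
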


In the cases we consider in this paper GP2 is equivalent to GP1 by
the following proposition.

\begin{proposition} \label{GP2GP1}
$ $\\
(i) Let $V$ be a quadratic space (i.e. a linear space with a
non-degenerate quadratic form) and let $H \subset GL(V)$ be any
transpose invariant subgroup.
 Then $GP1$ is equivalent to $GP2$ for the pair
$(\mathrm{GL}(V),H)$.\\
(ii) Let $V$ be a quadratic space and let $H \subset O(V)$ be any
subgroup. Then $GP1$ is equivalent to $GP2$ for the pair
$(O(V),H)$.
\end{proposition}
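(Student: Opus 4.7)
The plan is to establish the nontrivial direction GP2 $\Rightarrow$ GP1 in both parts, since GP1 $\Rightarrow$ GP2 is the implication already recorded. In each case the common strategy is to produce a natural identification
$\dim \Hom_{H(F)}(E,\cc) = \dim \Hom_{H(F)}(\widetilde{E},\cc)$
for every irreducible admissible $(\pi,E)$, so that the product of the two dimensions appearing in GP2 becomes a square and the bound $\le 1$ forces each factor to be $\le 1$, which is exactly GP1.

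For part (i), I would invoke the Gelfand--Kazhdan theorem for $GL(V)$: for every irreducible admissible representation $\pi$ of $GL(V)$, composition with the involutive automorphism $\sigma(g):=(g^{t})^{-1}$ gives $\pi\circ\sigma\cong\widetilde{\pi}$, where $g^{t}$ denotes the transpose with respect to the fixed non-degenerate quadratic form on $V$. By hypothesis $H$ is transpose-invariant, and every subgroup is closed under inversion, so $\sigma(H)=H$. Pullback by $\sigma$ therefore yields a linear isomorphism $\Hom_{H(F)}(\widetilde{E},\cc)\cong \Hom_{H(F)}(\pi\circ\sigma,\cc)\cong \Hom_{H(F)}(E,\cc)$, and GP2 collapses to GP1.

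For part (ii) the analogous scheme requires the stronger assertion that every irreducible admissible representation $\pi$ of $O(V)$ is self-contragredient, i.e. $\widetilde{\pi}\cong\pi$. Granting this, the two factors in GP2 are identified for \emph{any} subgroup $H\subset O(V)$, and the inequality forces GP1. I would prove the self-contragredience via the classical fact that every element of $O(V)$ is $O(V)$-conjugate to its inverse (bireflectionality: every isometry of a non-degenerate quadratic space over a field of characteristic zero is a product of two involutive isometries). This equates the distribution characters $\chi_{\pi}(g)=\chi_{\pi}(g^{-1})=\chi_{\widetilde{\pi}}(g)$, and since distribution characters of irreducible admissible representations determine the representation up to isomorphism (uniformly in the archimedean and non-archimedean cases, cf.\ the setup recalled in Section \ref{PrelNot}), we conclude $\pi\cong\widetilde{\pi}$.

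The main obstacle is the bireflectionality input in part (ii); part (i) is essentially a formal consequence of Gelfand--Kazhdan once one checks that $\sigma$ preserves $H$. Bireflectionality for $O(V)(F)$ over an arbitrary local field $F$ of characteristic zero is a statement about rational conjugacy classes of classical groups, and while standard, verifying it (or locating a reference valid in both the archimedean and non-archimedean settings) is the step I would expect to require the most care.
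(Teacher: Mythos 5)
Your proposal is correct and follows essentially the same route as the paper: the paper reduces both parts to a general criterion (Proposition \ref{GKCor}, cited from [AG2] --- an anti-automorphism preserving $H$ and all closed conjugacy classes forces the two factors in GP2 to coincide, via the same character argument you sketch) together with exactly the two conjugacy facts you identify, namely $g \sim g^{t}$ in $GL(V)$ and $g \sim g^{-1}$ inside $O(V)$, the latter cited from [MVW, Ch.~4, Prop.~I.2], which is precisely the bireflectionality input you flag as the delicate step. You have merely unpacked the content of the cited criterion rather than taken a different path.
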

It follows from the following 2 propositions.

\begin{proposition} \label{GKCor}
Let $H \subset G$ be reductive groups and let $\tau$ be an
anti-automorphism of $G$ such that\\
(i) $\tau^2\in Ad(G(F))$\\
(ii) $\tau$ preserves any closed conjugacy class in $G(F)$\\
(iii) $\tau(H)=H$.\\
Then $GP1$ is equivalent to $GP2$ for the pair $(G,H)$.
\end{proposition}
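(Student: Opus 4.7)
The plan is to prove the nontrivial implication $GP2\Rightarrow GP1$; the converse is immediate, since applying $GP1$ to both $\pi$ and $\widetilde\pi$ gives $GP2$. The strategy is to show that under (i)--(iii), for every irreducible admissible representation $(\pi,V)$ of $G$,
\[
\dim\Hom_{H(F)}(\pi,\C)\ =\ \dim\Hom_{H(F)}(\widetilde\pi,\C).
\]
The $GP2$ inequality then reads $\bigl(\dim\Hom_{H(F)}(\pi,\C)\bigr)^{2}\le 1$, which is exactly $GP1$.

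I would obtain this equality by the classical Gelfand--Kazhdan twisting trick. Since $\tau$ is an algebraic anti-automorphism, the map $g\mapsto\tau(g)^{-1}=\tau(g^{-1})$ is an algebraic automorphism of $G$, and pulling back $\pi$ along it gives an admissible irreducible representation $(\pi^{\tau},V)$ with $\pi^{\tau}(g):=\pi(\tau(g)^{-1})$. Hypothesis (iii), together with the bijectivity of $h\mapsto\tau(h^{-1})$ on $H(F)$, shows that a linear functional on $V$ is $\pi^{\tau}(H(F))$-invariant precisely when it is $\pi(H(F))$-invariant, so
\[
\Hom_{H(F)}(\pi^{\tau},\C)\ =\ \Hom_{H(F)}(\pi,\C).
\]

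It remains to identify $\pi^{\tau}$ with $\widetilde\pi$, which I would do by comparing Harish--Chandra characters on the regular semisimple locus:
\[
\chi_{\pi^{\tau}}(g)\ =\ \chi_{\pi}\bigl(\tau(g^{-1})\bigr)\ =\ \chi_{\pi}(g^{-1})\ =\ \chi_{\widetilde\pi}(g),
\]
where the middle equality uses (ii) to place the regular semisimple element $\tau(g^{-1})$ in the closed conjugacy class of $g^{-1}$, on which characters are constant. Invoking the theorem that an irreducible admissible representation of a reductive group over a local field of characteristic zero is determined up to isomorphism by its distribution character (Harish--Chandra in both the archimedean and non-archimedean cases), one concludes $\pi^{\tau}\cong\widetilde\pi$, which closes the argument. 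Condition (i) plays the complementary role of making $\pi\mapsto\pi^{\tau}$ an involution on isomorphism classes of irreducibles (since $\tau^{2}$ inner implies $(\pi^{\tau})^{\tau}\cong\pi$), but is not logically needed for the implication above. The only real technical input — and the only potential obstacle — is the character-separation theorem, which is classical in both settings.
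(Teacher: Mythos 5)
Your argument is correct and is essentially the proof the paper relies on: the paper itself only cites \cite{AG2}, Corollary 8.2.3, and the argument there is exactly this Gelfand--Kazhdan twist, namely that (ii) forces $\chi_{\pi^{\tau}}=\chi_{\widetilde\pi}$ on the (dense, closed-orbit) regular semisimple locus, hence $\pi^{\tau}\cong\widetilde\pi$ by character separation, while (iii) identifies $\Hom_{H(F)}(\pi^{\tau},\C)$ with $\Hom_{H(F)}(\pi,\C)$. Your side remark that (i) is not logically needed for the implication $GP2\Rightarrow GP1$ is also accurate; it enters only in the broader framework of \cite{AG2} where $\tau$ is required to be an admissible anti-automorphism.
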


For proof see \cite{AG2}, Corollary 8.2.3.

\begin{proposition} $ $\\
(i) Let $V$ be a quadratic space and let $g \in GL(V)$. Then $g$
is conjugate to $g^{t}$.\\
(ii) Let $V$ be a quadratic space and let $g \in O(V)$. Then $g$
is conjugate to $g^{-1}$ inside $O(V)$.
\end{proposition}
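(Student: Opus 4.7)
For part (i), the plan is to reduce to a standard linear algebra fact that is valid over any field. By fixing the basis-free transpose as the adjoint with respect to the bilinear form (any other choice of transpose differs by an inner automorphism, so the conclusion is unaffected), the claim becomes: any $g\in GL_n(F)$ is conjugate to $g^t$ in $GL_n(F)$. I would prove this via rational canonical form: two elements of $GL_n(F)$ are conjugate over $F$ iff the matrices $xI-g$ and $xI-g'$ have the same Smith normal form in $M_n(F[x])$. Since transposition commutes with row/column operations up to swapping them, $xI-g$ and $xI-g^t=(xI-g)^t$ have the same invariant factors, whence $g$ and $g^t$ share a rational canonical form and are $GL(V)$-conjugate.

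For part (ii), the natural strategy is to first decompose $V$ orthogonally into $g$-invariant blocks and then solve the conjugation problem on each block. Factor the minimal polynomial of $g$ as a product of prime powers, and group together each pair $\{p(x),p^*(x)\}$ where $p^*(x):=x^{\deg p}p(1/x)/p(0)$ is the reciprocal polynomial. Because $g\in O(V)$ satisfies $g^t=g^{-1}$, the generalized eigenspace decomposition pairs $V_p$ and $V_{p^*}$ into non-degenerate $g$-stable summands orthogonal to the others, so $V$ splits orthogonally into pieces of two types. It suffices to construct the conjugating element $h\in O(V)$ on each piece and then assemble them.

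For a piece of the first type, where $p\ne p^*$, the summand has the form $V_p\oplus V_{p^*}$ with both factors totally isotropic and in perfect duality under the form; here one takes $h$ to be the involution swapping the two factors (on $V_p$ by any $F$-linear isomorphism $\phi:V_p\to V_{p^*}$, and on $V_{p^*}$ by $\phi^{-1}$, with $\phi$ chosen so that $h$ is orthogonal and interchanges the actions of $g$ and $g^{-1}$). The main obstacle, and where the real work lies, is the self-reciprocal case $p=p^*$, which includes in particular the unipotent/anti-unipotent blocks with $p(x)=x\mp1$. In that case $V=V_p$ is already non-degenerate and the problem is genuinely internal to $O(V_p)$. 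I would handle it by describing $(V_p,g)$ as a hermitian module over the commutative $F$-algebra $A=F[g]$ with involution $\sigma:a\mapsto a^t$ sending $g\mapsto g^{-1}$, and showing that $(V_p,g)$ and $(V_p,g^{-1})$ represent the same class of such hermitian modules; equivalently, one reduces to the classification of orthogonal conjugacy classes in $O(V)$ with given rational canonical form and checks that the residual discriminant/Witt-type invariants are invariant under $g\mapsto g^{-1}$. For the subcase $p(x)=x\mp1$ the problem reduces to showing that every unipotent element of $O(V)$ is conjugate to its inverse, which follows from the Jacobson--Morozov $\mathfrak{sl}_2$-triple construction (the Weyl element of the triple conjugates the nilpotent to its negative, hence the unipotent to its inverse), and this archetype of the argument can be adapted to handle the higher-degree self-reciprocal primes by working inside the $\sigma$-fixed subalgebra of $A$.
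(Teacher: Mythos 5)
The paper itself does not prove this proposition: it declares part (i) well known and refers to \cite{MVW}, Proposition I.2 of Chapter 4, for part (ii). Your part (i) is correct and complete: the form-adjoint $g^t$ equals $B^{-1}g^TB$ for the (symmetric) Gram matrix $B$, hence is $GL(V)$-conjugate to the ordinary transpose, and the Smith-normal-form argument for $xI-g$ shows any matrix is $GL_n(F)$-conjugate to its transpose. Your plan for part (ii) --- primary decomposition of the minimal polynomial, hyperbolic pairs $\{p,p^*\}$ handled by a swap, self-reciprocal $p$ handled via hermitian modules over $F[g]$ with the involution $g\mapsto g^{-1}$ --- is in substance the strategy behind the cited result.

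There is, however, a genuine gap exactly where the content of (ii) lies. The one explicit mechanism you offer for the self-reciprocal case is wrong: the Weyl element $w$ of an $sl_2$-triple $(e,h,f)$ satisfies $\mathrm{Ad}(w)e=-f$, not $-e$, and the torus through $h$ conjugates $e$ only to $t^2e$; since $-1$ need not be a square in $F$, this does not produce $-e$ (indeed $e$ and $-e$ are \emph{not} $SL_2(\R)$-conjugate in $\mathfrak{sl}_2(\R)$). The statement that a nilpotent $N\in\mathfrak{so}(V)$ is $O(V)$-conjugate to $-N$ is true, but it requires the invariant check you defer: writing $V=\bigoplus_d V_d\otimes W_d$ for the $sl_2$-decomposition, the form induced on the multiplicity space $W_d$ is multiplied by $(-1)^{d-1}$ when $N$ is replaced by $-N$, and this sign is $+1$ precisely for odd $d$, which are the only blocks carrying a quadratic (rather than symplectic, invariant-free) form. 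An analogous explicit comparison of invariants over $(F_p,\mu)$ is needed for higher-degree self-reciprocal primes, and in the hyperbolic case you also need the conjugating map $\phi$ to be symmetric as a bilinear form on $V_p$ for your swap $h$ to be orthogonal (the classical fact that a matrix is conjugated to its transpose by a \emph{symmetric} matrix), which you do not address. As written, part (ii) is a correct outline of the known proof rather than a proof.
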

Part (i) is well known. For the proof of (ii) see \cite{MVW},
Proposition I.2 in chapter 4.

\subsection{Symmetric pairs} \label{SymPar}
$ $\\
In this subsection we review some tools developed in \cite{AG2}
that enable to prove that a symmetric pair is a Gelfand pair. The
main results discussed in this subsection are Theorem
\ref{LinDes}, Theorem \ref{GoodHerRegGK} and Proposition
\ref{SpecCrit}.

\begin{definition}
A \textbf{symmetric pair} is a triple $(G,H,\theta)$ where $H
\subset G$ are reductive groups, and $\theta$ is an involution of
$G$ such that $H = G^{\theta}$. We call a symmetric pair
\textbf{connected} if $G/H$ is connected.

For a symmetric pair $(G,H,\theta)$ we define an antiinvolution
$\sigma :G \to G$ by $\sigma(g):=\theta(g^{-1})$, denote $\g:=Lie
G$, $\h := LieH$, $\gd:=\{a \in \g | \theta(a)=-a\}$. Note that
$H$ acts on $\gd$ by the adjoint action. Denote also
$G^{\sigma}:=\{g \in G| \sigma(g)=g\}$ and define a
\textbf{symmetrization map} $s:G \to G^{\sigma}$ by $s(g):=g
\sigma(g)$.

In case when the involution is obvious we will omit it.
\end{definition}

\begin{remark}
Let $(G,H,\theta)$ be a symmetric pair. Then $\g$ has a $\Z/2\Z$
grading given by $\theta$.
\end{remark}

\begin{definition}
Let $(G_1,H_1,\theta_1)$ and $(G_2,H_2,\theta_2)$ be symmetric
pairs. We define their \textbf{product} to be the symmetric pair
$(G_1 \times G_2,H_1 \times H_2,\theta_1 \times \theta_2)$.
\end{definition}

\begin{definition}
We call a symmetric pair $(G,H,\theta)$ \textbf{good} if for any
closed $H(F) \times H(F)$ orbit $O \subset G(F)$, we have
$\sigma(O)=O$.
\end{definition}

\begin{proposition} \label{GoodCrit}
Every connected symmetric pair over $\C$ is good.
\end{proposition}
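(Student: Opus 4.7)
The plan is to reduce to showing that every closed $H\times H$ double coset in $G$ meets a fixed maximal $\theta$-split torus $A\subset G$. Once this is done, the conclusion is immediate: every $a\in A$ satisfies $\theta(a)=a^{-1}$ and hence $\sigma(a)=a$, while $\sigma(H)=H$ because $\sigma(h)=h^{-1}$ for $h\in H$; combining these, $\sigma(HaH)=HaH$.

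The main tool will be the symmetrization map $s\colon G\to G^{\sigma}$, $s(g)=g\sigma(g)$. First I would check two elementary facts: (i) $s(h_{1}gh_{2})=h_{1}s(g)h_{1}^{-1}$ for $h_{1},h_{2}\in H$, and (ii) the fibers of $s$ are precisely the right $H$-cosets, since $s(g')=s(g)$ rearranges to $g^{-1}g'=\sigma(g^{-1}g')^{-1}$, which forces $g^{-1}g'\in G^{\theta}=H$. Together these say that $s$ descends to an $H$-equivariant injection $\bar s\colon G/H\hookrightarrow G^{\sigma}$, with $H$ acting on the target by conjugation; in the connected complex case $\bar s$ is in fact a closed immersion, giving a bijection between closed $H\times H$-orbits in $G$ and closed $H$-conjugacy orbits in $s(G)\subset G^{\sigma}$.

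Next I would invoke the algebraic Cartan decomposition for affine symmetric varieties over $\C$ (the group-level analogue of Kostant--Rallis): every closed $H$-orbit on $G^{\sigma}$ contains an element of $A$. Since $A\cong(\C^{\times})^{r}$ is $2$-divisible and $s(a)=a^{2}$ for $a\in A$, we have $s(A)=A$. Thus, for any closed double coset $O=HgH$, the corresponding closed $H$-orbit through $s(g)$ meets $A$, so it contains $a^{2}=s(a)$ for some $a\in A$. Hence $s(g)=h\,s(a)\,h^{-1}=s(ha)$ for some $h\in H$, and by the fiber description this forces $g\in haH\subset HaH$. Therefore $O=HaH$, which is $\sigma$-stable.

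The principal obstacle is the Cartan-decomposition step, namely that every closed $H$-orbit on the symmetric variety $G^{\sigma}$ meets the chosen $\theta$-split torus. Over $\C$ this is standard structural theory, following from the $H$-conjugacy of maximal $\theta$-split tori together with the fact that closed $H$-orbits are orbits of \emph{semisimple symmetric} elements. A secondary technical point is verifying that $\bar s$ is a closed immersion rather than merely bijective on points; this is where the connectedness hypothesis on $G/H$ enters, to rule out stray components of $G^{\sigma}$ lying outside the image.
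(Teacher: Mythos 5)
The paper offers no proof of this proposition: it is quoted from \cite{AG2} (Corollary 7.1.7), so there is nothing in the text to compare your argument with line by line. That said, your overall strategy --- push a closed double coset through the symmetrization map $s$ and land in a maximal $\theta$-split torus $A$ --- is a legitimate classical route, and the elementary parts are all correct: the identity $s(h_1gh_2)=h_1s(g)h_1^{-1}$, the fiber computation $s(g')=s(g)\Leftrightarrow g^{-1}g'\in G^{\theta}=H$, the $2$-divisibility of $A$ over $\C$, and the final deduction $O=HaH=\sigma(O)$. The entire weight of the proof, however, rests on the ``Cartan decomposition'' step, and there you have two genuine problems. First, the statement ``every closed $H$-orbit on $G^{\sigma}$ meets $A$'' is false as written; it must be restricted to the image $s(G)\subset G^{\sigma}$. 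Concretely, for $G=GL_2$, $\eps=\diag(1,-1)$, $\theta=\mathrm{Ad}(\eps)$ and $H$ the diagonal torus, the point $\eps$ is a semisimple element of $G^{\sigma}$ whose $H$-orbit is the closed singleton $\{\eps\}$, yet $\eps$ lies in no $\theta$-split torus: for $a$ in such a torus, $\theta(a)=a^{-1}$ forces $\det(a)=\det(a)^{-1}$, so $\det\equiv 1$ on the (connected) torus, while $\det\eps=-1$. The correct input is Richardson's theorem that every \emph{semisimple} element of $s(G)$ is $H$-conjugate into $A$; combined with Proposition \ref{PropDescend} ($HgH$ closed $\Rightarrow$ $s(g)$ semisimple), this gives exactly what you need and lets you discard the separate, and itself nontrivial, claim that $\bar s$ is a closed immersion. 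Second, that theorem is proved in the literature for \emph{connected} $G$, whereas the pairs this paper cares about include $G=O(V)$; the hypothesis that $G/H$ is connected is used not to control components of $G^{\sigma}$, as you suggest, but to guarantee $G=G^{0}H$, which reduces the double cosets, the image $s(G)=s(G^{0})$, and the conjugating elements to the connected group $G^{0}$.

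You should also be aware of a more elementary argument that avoids split tori entirely and is closer in spirit to the toolkit of \cite{AG2}: since $\h+\gd=\g$, the set $H\,s(G)\,H$ contains a dense open subset of $G^{0}$, hence (again using $G=G^{0}H$) is dense in $G$; on this set every $H(\C)\times H(\C)$-invariant regular function is visibly $\sigma$-invariant, hence all such functions are; therefore the closed orbits $O$ and $\sigma(O)$ have the same image in the categorical quotient $G/\!/(H\times H)$ and must coincide. This needs nothing beyond a tangent-space computation and basic GIT.
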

For proof see e.g. \cite{AG2}, Corollary 7.1.7.

\begin{definition}
We say that a symmetric pair $(G,H,\theta)$ is a \textbf{GK pair}
if any $H(F) \times H(F)$ - invariant distribution on $G(F)$ is
$\sigma$ - invariant.
\end{definition}

\begin{remark}
Theorem \ref{DistCrit} implies that any GK pair satisfies GP2.
\end{remark}
\subsubsection{Descendants of symmetric pairs}
\begin{proposition} \label{PropDescend}
Let  $(G,H,\theta)$ be a symmetric pair. Let $g \in G(F)$ such
that $HgH$ is closed. Let $x=s(g)$. Then $x$ is a semisimple
element of $G$.
\end{proposition}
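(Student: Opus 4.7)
The plan is to exploit the equivariance of the symmetrization map $s$ to translate the hypothesis on $HgH$ into a statement about $H$-conjugacy orbits in $G^{\sigma}$, and then to use the Jacobson--Morozov theorem for symmetric pairs to rule out a nontrivial unipotent part of $x$.

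First I would verify by direct computation that $x\in G^{\sigma}$ (since $\sigma(x)=\sigma(\sigma(g))\sigma(g)=g\sigma(g)=x$) and that, because $\sigma(h)=\theta(h^{-1})=h^{-1}$ for $h\in H$, the map $s$ satisfies $s(gh)=s(g)$ and $s(hg)=hg\sigma(g)h^{-1}=hxh^{-1}$. Thus $s$ descends to a map $\bar{s}:G/H\to G^{\sigma}$ that is $H$-equivariant, where $H$ acts on $G/H$ by left translation and on $G^{\sigma}$ by conjugation. This $\bar{s}$ is a closed immersion: it is the orbit map at $1\in G^{\sigma}$ for the twisted action $g\cdot y:=gy\sigma(g)$ of $G$ on $G^{\sigma}$, whose stabilizer at $1$ is precisely $H$, and whose image $s(G)$ is a union of connected components of $G^{\sigma}$.

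Next, since $HgH$ is right $H$-saturated and closed in $G(F)$, its image in $(G/H)(F)$ is closed, and pushing forward via $\bar{s}$ shows that the $H$-conjugacy orbit of $x$ is closed in $G^{\sigma}(F)\subset G(F)$. Now assume for contradiction that $x$ is not semisimple, and let $x=x_{s}x_{u}$ be its Jordan decomposition. Uniqueness combined with $\sigma(x)=x$ forces $\sigma(x_{s})=x_{s}$ and $\sigma(x_{u})=x_{u}$, so in particular $\theta(x_{s})=x_{s}^{-1}$ and the centralizer $M:=Z_{G}(x_{s})$ is $\theta$-stable, yielding a symmetric subpair $(M,M\cap H,\theta|_{M})$ in which $x_{u}$ is a nontrivial unipotent element of $M^{\sigma}$. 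Writing $x_{u}=\exp(X)$ with $0\neq X\in\mm^{-\theta}$ nilpotent (where $\mm=\mathrm{Lie}\,M$), the Kostant--Rallis version of Jacobson--Morozov embeds $X$ in an $\mathfrak{sl}_{2}$-triple whose neutral element lies in $\mm\cap\h$, exponentiating to a one-parameter subgroup $\lambda:\mathbb{G}_{m}\to M\cap H$ such that $\lambda(t)x\lambda(t)^{-1}=x_{s}\exp(t^{2}X)\to x_{s}$ as $t\to 0$. The closedness of the $H$-conjugacy orbit of $x$ would then place $x_{s}$ in that orbit, contradicting uniqueness of Jordan decomposition.

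The main obstacle will be the second step: showing that closedness of $HgH$ in $G(F)$ (in the $F$-analytic topology) really does transfer, via the closed immersion $\bar{s}$, to closedness of the $H$-conjugacy orbit of $x$ in $G(F)$. Over a general local field this requires checking that the structural statement "$\bar{s}$ is a closed immersion of algebraic varieties" propagates through the analytic topology on $F$-points, a point that is standard but needs care and fits naturally into the framework developed in \cite{AG2}. The Jordan-decomposition and Jacobson--Morozov portions are routine in characteristic zero.
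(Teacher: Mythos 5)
The paper gives no argument of its own for this proposition---it only cites \cite{AG2}, Proposition 7.2.1---and your proof is correct and follows essentially the same route as that reference: use the symmetrization map (with Richardson's theorem that $s(G)$ is a union of connected components of $G^{\sigma}$ and that $G/H \stackrel{\sim}{\to} s(G)$) to convert closedness of $HgH$ into closedness of the $H(F)$-conjugation orbit of $x$, and then contract the unipotent part of $x$ to the identity along the one-parameter subgroup of $H$ supplied by the graded Jacobson--Morozov lemma applied inside $Z_G(x_s)$. The two points you flag as needing care---closedness of $s(G)$ and the transfer of closedness from $G(F)$ to $(G/H)(F)$ in the analytic topology (via openness and finiteness of the $G(F)$-orbits on $(G/H)(F)$)---are exactly the standard facts one must invoke, and both hold over a local field of characteristic zero.
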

For proof see e.g. \cite{AG2}, Proposition 7.2.1.
\begin{definition}
In the notations of the previous proposition we will say that the
pair $(G_x,H_x,\theta|_{G_x})$ is a \textbf{descendant} of
$(G,H,\theta)$.
\end{definition}

\subsubsection{Tame symmetric pairs}
\begin{definition}
Let $\pi$ be an action of a reductive group $G$ on a smooth affine
variety $X$.
We say that an algebraic automorphism $\tau$ of $X$ is \textbf{$G$-admissible} if \\
(i) $\pi(G(F))$ is of index at most 2 in the group of
automorphisms of $X$
generated by $\pi(G(F))$ and $\tau$.\\
(ii) For any closed $G(F)$ orbit $O \subset X(F)$, we have
$\tau(O)=O$.
\end{definition}

\begin{definition}
We call an action of a reductive group $G$ on a smooth affine
variety $X$ \textbf{tame} if for any $G$-admissible $\tau : X \to
X$, every $G(F)$-invariant distribution on $X(F)$ is
$\tau$-invariant.

We call a symmetric pair $(G,H,\theta)$ \textbf{tame} if the
action of $H\times H$ on $G$ is tame.
\end{definition}

\begin{remark}
Evidently, any good tame symmetric pair is a GK pair.
\end{remark}

\begin{notation}
Let $V$ be an algebraic finite dimensional representation over $F$
of a reductive group $G$. Denote $Q(V):=V/V^G$. Since $G$ is
reductive, there is a canonical embedding $Q(V) \hookrightarrow
V$.
\end{notation}

\begin{notation}
Let $(G,H,\theta)$ be a symmetric pair. We denote by
$\mathcal{N}_{G,H}$ the subset of all the nilpotent elements in
$Q(\gd)$. Denote $R_{G,H}:=Q(\gd) - \mathcal{N}_{G,H}$.
\end{notation}
Note that our notion of $R_{G,H}$ coincides with the notion
$R(\gd)$ used in \cite{AG2}, 
Notation 2.3.10. This follows from
Lemma 7.1.11 in \cite{AG2}.

\begin{definition}
We call a symmetric pair $(G,H,\theta)$ \textbf{weakly linearly
tame} if for any $H$-admissible transformation $\tau$ of $\gd$
such that every $H(F)$-invariant distribution on $R_{G,H}$ is also
$\tau$-invariant,
we have\\
(*) every $H(F)$-invariant distribution on $Q(\gd)$ is also
$\tau$-invariant.
\end{definition}

\begin{theorem} \label{LinDes}
Let $(G,H,\theta)$ be a symmetric pair. Suppose that all its
descendants (including itself)  are weakly linearly tame. Then
$(G,H,\theta)$ is tame.
\end{theorem}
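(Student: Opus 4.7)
The plan is to proceed by induction on $\dim G$; the case $\dim G=0$ is vacuous. So assume the statement for all symmetric pairs of strictly smaller dimension, fix $(G,H,\theta)$ whose descendants are all weakly linearly tame, pick an $H\times H$-admissible automorphism $\tau$ of $G$, and let $\xi$ be an $H(F)\times H(F)$-invariant distribution on $G(F)$; we must show $\tau(\xi)=\xi$.

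First I would perform Harish-Chandra style descent along the symmetrization map $s\colon G\to G^\sigma$: using a Luna slice at each semisimple $x\in G^\sigma(F)$ together with the linearization results of \cite{AG2}, the problem of $\tau$-invariance of an $H(F)\times H(F)$-invariant distribution near a closed orbit $HgH$ with $s(g)=x$ is converted into the problem of $\tau_x$-invariance of every $H_x(F)$-invariant distribution on (a neighborhood of $0$ in) the $-1$-eigenspace $\gd_x$ of the descendant pair $(G_x,H_x,\theta|_{G_x})$. Here $\tau_x$ is the induced admissible automorphism of $\gd_x$. A standard localization principle then reduces the global statement on $G(F)$ to the collection of these local statements indexed by the semisimple $x$ produced by Proposition \ref{PropDescend}.

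Now split into two cases according to $x$. If $x$ is not central, the descendant $(G_x,H_x)$ has strictly smaller dimension, hence by the inductive hypothesis it is tame, and in particular every $H_x(F)$-invariant distribution on $\gd_x$ is $\tau_x$-invariant, as required. It remains to treat the case where $x$ is central, where the descendant coincides with $(G,H,\theta)$ itself and the question reduces to showing that every $H(F)$-invariant distribution on $Q(\gd)$ is invariant under the linearized $\tau$. Because $(G,H,\theta)$ is weakly linearly tame by hypothesis, it is enough to verify this on $R_{G,H}=Q(\gd)\setminus\mathcal{N}_{G,H}$.

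Finally, to deal with $R_{G,H}$, note that every point of $R_{G,H}$ has nonzero semisimple part, so I would apply a second Harish-Chandra descent, this time on the Lie algebra $\gd$: the $\tau_x$-invariance of $H(F)$-invariant distributions supported on $R_{G,H}$ reduces, at each non-central semisimple $y\in\gd$, to an invariance statement for the pair $(G_y,H_y)$, which is a proper descendant and hence tame by induction. Patching these local statements produces $\tau$-invariance on $R_{G,H}$, and the weakly linearly tame hypothesis then lifts it to all of $Q(\gd)$, closing the induction.

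The main technical obstacle, for which I would lean heavily on the descent and linearization machinery already assembled in \cite{AG2}, is the twofold reduction from the group $G$ to the Lie algebras $\gd_x$: one must verify that $\tau$ really does restrict to an $H_x$-admissible automorphism on each slice $\gd_x$, that no $\tau$-invariance is lost in passing through the Luna slice and the symmetrization map, and that the compatibility of these local statements genuinely recovers the global statement on $G(F)$. Once this descent is set up, the inductive combinatorics together with the weakly linearly tame hypothesis run as above.
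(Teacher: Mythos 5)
Your overall strategy---generalized Harish-Chandra descent via Luna slices along the symmetrization map, induction on dimension, and the weakly-linearly-tame hypothesis to cross the nilpotent cone---is exactly the strategy behind the proof the paper points to (Theorem 7.3.3 of \cite{AG2}; the paper itself gives no argument, only that citation). However, as written your induction does not close. At both places where you invoke the inductive hypothesis you conclude that a proper descendant $(G_x,H_x)$ is \emph{tame} and then assert ``in particular every $H_x(F)$-invariant distribution on $\g_x^{\sigma}$ is $\tau_x$-invariant.'' That ``in particular'' is a genuine gap: tameness is a statement about $H_x(F)\times H_x(F)$-invariant distributions on the group $G_x(F)$, while what the Luna slice hands you is a problem about $H_x(F)$-invariant distributions on the linear space $Q(\g_x^{\sigma})$ (what \cite{AG2} calls \emph{linear tameness}). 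The descent machinery converts the group problem into the linear one, not conversely; there is no evident way to exponentiate an arbitrary $H_x$-admissible transformation of $\g_x^{\sigma}$ to an $H_x\times H_x$-admissible automorphism of $G_x$, nor to recover distributions on all of $\g_x^{\sigma}$ from distributions near the identity of $G_x$. So ``tame $\Rightarrow$ linearly tame'' is unproved, and the conclusion your inductive hypothesis delivers is not the one you need at the point of application.

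The repair is the one carried out in \cite{AG2}: make the inductive statement the \emph{linear} one. Prove by induction that every descendant $(G_x,H_x)$ is linearly tame, using the Lie-algebra descent at non-central semisimple elements of $\g_x^{\sigma}$ (whose centralizer pairs are again descendants, by transitivity of the descendant relation) to handle $R_{G_x,H_x}$, and the weak linear tameness hypothesis to pass from $R_{G_x,H_x}$ to $Q(\g_x^{\sigma})$; only then apply the group-level descent once to conclude that $(G,H,\theta)$ is tame. Two smaller points to watch: for disconnected $G$ (e.g.\ orthogonal groups) a non-central semisimple element can have a centralizer of full dimension, so the induction should run on a quantity such as $\dim Q(\g_x^{\sigma})$ rather than on $\dim G$ with a ``central vs.\ non-central'' dichotomy; and the claim that $\tau$ induces an $H_x$-admissible transformation of each slice, which you rightly flag as the main technical obstacle, is indeed where most of the work in \cite{AG2} lives.
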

For proof see Theorem 7.3.3 in \cite{AG2}.

Now we would like to formulate a criterion for being weakly linearly
tame. For it we
will need the following lemma and notation.

\begin{lemma}
Let $(G,H,\theta)$ be a symmetric pair. Then any nilpotent element
$x \in \gd$ can be extended to an $sl_2$ triple $(x,d(x),x_-)$
such that $d(x) \in \h$ and $x_- \in \gd$.
\end{lemma}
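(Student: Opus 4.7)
The statement is a $\mathbb{Z}/2\mathbb{Z}$-graded version of the Jacobson--Morozov theorem, known as the Kostant--Rallis lemma. My plan is to bootstrap the classical Jacobson--Morozov theorem using the grading $\g = \h \oplus \gd$ induced by $\theta$.

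\emph{Step 1.} Apply the classical Jacobson--Morozov theorem to the nilpotent $x\in\g$ to obtain some $\mathfrak{sl}_2$-triple $(x,h_0,y_0)$ in $\g$, with no regard to the grading.

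\emph{Step 2.} Decompose $h_0 = h_0^+ + h_0^-$ and $y_0 = y_0^+ + y_0^-$ with $(\cdot)^+\in\h$ and $(\cdot)^-\in\gd$. The bracket identities $[h_0,x]=2x$ and $[x,y_0]=h_0$ decompose by grade, using $[\h,\gd]\subset\gd$ and $[\gd,\gd]\subset\h$, into
\[
[h_0^+,x] = 2x, \qquad [h_0^-,x] = 0, \qquad [x,y_0^-] = h_0^+, \qquad [x,y_0^+] = h_0^-.
\]
In particular $h_0^+ \in \h \cap [x,\gd]$.

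\emph{Step 3.} Let $d(x)$ denote the semisimple part of $h_0^+$ in its Jordan decomposition. Because $\h$ is $\theta$-fixed and Jordan decomposition is compatible with the $\theta$-action, $d(x)\in\h$; because $x$ spans a single $2$-eigenline of $\ad(h_0^+)$, one still has $[d(x),x] = 2x$, while the nilpotent part lies in $Z_\g(x)\cap\h$.

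\emph{Step 4.} Apply Morozov's extension theorem to $d(x)$ to obtain some $z\in\g$ with $(x,d(x),z)$ an $\mathfrak{sl}_2$-triple. Decompose $z = z^+ + z^-$. Since $d(x)\in\h$ preserves the grading, projecting $[d(x),z]=-2z$ and $[x,z]=d(x)\in\h$ onto each grade (again using $[\gd,\gd]\subset\h$ and $[\gd,\h]\subset\gd$) gives $[d(x),z^-]=-2z^-$ and $[x,z^-]=d(x)$. Setting $x_- := z^-$ yields the required triple $(x,d(x),x_-)$ with $d(x)\in\h$ and $x_-\in\gd$.

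\emph{Main obstacle.} Verifying the hypothesis of Morozov's extension in Step 4, namely that $d(x)\in[x,\g]$. While $h_0^+$ is visibly in $[x,\gd]$, its semisimple part $d(x)$ differs from it by the nilpotent part, which sits in $Z_\g(x)\cap\h$. To bridge this gap, I would either absorb the nilpotent part by conjugating the original triple $(x,h_0,y_0)$ with $\exp$ of an appropriate ad-nilpotent element of $Z_\g(x)\cap\h$ (exploiting Malcev's uniqueness of $\mathfrak{sl}_2$-triples containing $x$ up to the action of $Z_G(x)^\circ$), or, more invariantly, interpret the construction as a trivialization of a $\theta$-cocycle on $Z_G(x)^\circ$, where the $\theta$-fixed points of the torsor of triples containing $x$ are precisely the triples normal for the grading. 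This cocycle vanishing is the technical crux and reduces, via a Levi decomposition of $Z_G(x)^\circ$, to routine statements about connected algebraic groups with an involution over a field of characteristic zero.
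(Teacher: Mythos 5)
The paper itself gives no proof of this lemma; it cites \cite{AG2}, Lemma 7.1.11, whose argument is the standard graded Jacobson--Morozov (Kostant--Rallis) bootstrap that you set out in Steps 1--2. Those steps, and the graded projection in Step 4, are correct. The genuine gap is the one you yourself flag as the ``technical crux'': after replacing $h_0^+$ by its semisimple part $d(x)$ in Step 3, you can no longer verify the hypothesis $d(x)\in[x,\g]$ needed to invoke Morozov's extension lemma, and neither of your proposed repairs (conjugating the original triple by $\exp$ of an ad-nilpotent element of $Z_{\g}(x)\cap\h$, or trivializing a ``$\theta$-cocycle'' on $Z_G(x)^{\circ}$) is actually carried out. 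As written, the proof does not close.

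The gap is, however, entirely self-inflicted, and the repair is simply to delete Step 3. Morozov's lemma (Bourbaki, LIE VIII, \S 11, Lemme 6; Collingwood--McGovern, Lemma 3.3.5) requires only that $x$ be nilpotent, $[h,x]=2x$, and $h\in[x,\g]$; it does \emph{not} require $h$ to be semisimple --- semisimplicity of the middle element is a consequence of sitting in an $sl_2$-triple, not a hypothesis. Your Step 2 already exhibits $h_0^+=[x,y_0^-]\in[x,\gd]$ with $[h_0^+,x]=2x$ and $h_0^+\in\h$, so Morozov applies directly to the pair $(x,h_0^+)$ and yields $z$ with $(x,h_0^+,z)$ an $sl_2$-triple. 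Projecting as in your Step 4 then finishes: writing $z=z^++z^-$, the relation $[x,z]=h_0^+\in\h$ forces $[x,z^-]=h_0^+$ (since $[\gd,\gd]\subset\h$ and $[\gd,\h]\subset\gd$), and $[h_0^+,z]=-2z$ forces $[h_0^+,z^-]=-2z^-$, so $(x,\,h_0^+,\,z^-)$ is the required triple with $d(x):=h_0^+\in\h$ and $x_-:=z^-\in\gd$. This is precisely the argument behind the reference the paper cites.
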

For proof see e.g. \cite{AG2}, Lemma 7.1.11.

\begin{notation}
We will use the notation $d(x)$ from the last lemma in the future.
It is not uniquely defined but whenever we will use this notation
nothing will depend on its choice.
\end{notation}

\begin{proposition} \label{SpecCrit}
Let $(G,H,\theta)$ be a symmetric pair. Suppose that for any
nilpotent $x \in \gd$ we have
$$\tr(ad(d(x))|_{\h_x}) < \dim Q(\gd) .$$ 
Then the pair $(G,H,\theta)$ is weakly linearly tame.
\end{proposition}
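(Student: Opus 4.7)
The plan is to argue by contradiction. Let $\tau$ be an $H$-admissible transformation of $\gd$ such that every $H(F)$-invariant distribution on $R_{G,H}$ is $\tau$-invariant, and suppose there is an $H$-invariant distribution $\xi$ on $Q(\gd)$ which fails to be $\tau$-invariant. Then $\eta := \xi - \tau(\xi)$ is a nonzero, $H(F)$-invariant, $\tau$-antiinvariant distribution whose restriction to $R_{G,H} = Q(\gd) - \mathcal{N}_{G,H}$ vanishes by hypothesis. Hence $\Supp(\eta) \subset \mathcal{N}_{G,H}$, and the goal reduces to ruling out such a distribution.

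I would next fix an $H$-invariant nondegenerate symmetric bilinear form on $Q(\gd)$ (coming from the restriction of the Killing form to $\gd$, which is nondegenerate on the $(-1)$-eigenspace of $\theta$ in a reductive Lie algebra) and use it to define a Fourier transform $\Fou$ on $Q(\gd)$. Since $\tau$ is $H$-admissible and preserves this form up to scalar, the distribution $\Fou(\eta)$ is again $H$-invariant and $\tau$-antiinvariant. Applying the hypothesis once more, $\Fou(\eta)$ also vanishes on $R_{G,H}$, so $\Supp(\Fou(\eta)) \subset \mathcal{N}_{G,H}$ as well.

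The core of the argument is a localization at each nilpotent $H$-orbit, combined with the homogeneity supplied by the $sl_2$ triple $(x, d(x), x_-)$ from the preceding lemma. The one-parameter subgroup generated by $\ad(d(x))$ acts diagonalizably on $\gd$, with $x$ sitting in the weight-$2$ eigenspace. After localizing $\eta$ near $x$ via $H$-invariant cutoffs and restricting to a slice transverse to $H\cdot x$, an $H$-invariant distribution supported on the nilpotent cone transforms under this scaling by a character whose exponent is controlled by $\tr(\ad(d(x))|_{\h_x})$; simultaneously, the demand that $\Fou(\eta)$ be supported on the nilpotent cone forces a conjugate homogeneity, produced by the Jacobian $|\lambda|^{-\dim Q(\gd)}$ of the scaling on $\gd$. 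The two constraints are compatible only when $\tr(\ad(d(x))|_{\h_x}) \geq \dim Q(\gd)$; the strict inequality in the hypothesis rules this out, forcing $\eta$ to vanish in a neighborhood of $x$. Running over all nilpotent $x$ shows $\eta = 0$, contradicting our assumption.

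The hard part will be the last step, namely the precise comparison of the two homogeneity constraints. One must carefully keep track of the Weil/Jacobian factor picked up by $\Fou$ under the $sl_2$ scaling, of the contribution of the orbit dimension (controlled by $\dim \h - \dim \h_x$), and of the weights of $\ad(d(x))$ on $\gd$ and on $\h_x$, so that the sharp numerical inequality $\tr(\ad(d(x))|_{\h_x}) < \dim Q(\gd)$ emerges as exactly the critical threshold. These ingredients are all developed in \cite{AG2}, Sections 5--7, and the proof amounts to assembling them in the present setting.
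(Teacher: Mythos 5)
Your proposal is correct in outline and follows essentially the same route as the proof the paper invokes (Propositions 7.3.5 and 7.3.7 of \cite{AG2}): reduce to an $H(F)$-invariant distribution supported on $\mathcal{N}_{G,H}$ whose Fourier transform is also supported there, and then rule it out via the homogeneity along the $sl_2$-triple $(x,d(x),x_-)$, with the threshold given exactly by $\tr(\ad(d(x))|_{\h_x})$ versus $\dim Q(\gd)$. The quantitative comparison you defer to \cite{AG2} is precisely the content of those cited propositions, so there is nothing substantively different to flag.
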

This proposition follows from \cite{AG2} (Propositions 7.3.7 and
7.3.5).

\subsubsection{Regular symmetric pairs}
\begin{definition}
Let $(G,H,\theta)$ be a symmetric pair. We call an element $g \in
G(F)$ \textbf{admissible} if\\
(i) $Ad(g)$ commutes with $\theta$ (or, equivalently, $s(g)\in Z(G)$) and \\
(ii) $Ad(g)|_{\g^{\sigma}}$ is $H$-admissible.
\end{definition}

\begin{definition}
We call a symmetric pair $(G,H,\theta)$ \textbf{regular} if for
any admissible $g \in G(F)$ such that every $H(F)$-invariant
distribution on $R_{G,H}$ is also $Ad(g)$-invariant,
we have\\
(*) every $H(F)$-invariant distribution on $Q(\gd)$ is also
$Ad(g)$-invariant.
\end{definition}

The following two propositions are evident.
\begin{proposition} \label{TrivReg}
Let $(G,H,\theta)$ be symmetric pair. Suppose that any $g \in
G(F)$ satisfying $\sigma(g)g \in Z(G(F))$ lies in $Z(G(F))H(F)$.
Then $(G,H,\theta)$ is regular. In particular if the normalizer of
$H(F)$ lies inside $Z(G(F))H(F)$ then $(G,H,\theta)$ is regular.
\end{proposition}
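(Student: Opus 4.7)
The plan is to observe that admissibility is already enough to force $g \in Z(G(F))H(F)$ under the hypothesis, at which point the regularity condition becomes vacuous.

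First I would unpack the definition: an admissible $g$ satisfies $s(g) = g\sigma(g) \in Z(G(F))$. Writing $z := g\sigma(g)$, we get $\sigma(g) = g^{-1}z$, and therefore
\[
\sigma(g)g = g^{-1}zg = z \in Z(G(F)).
\]
By the hypothesis of the proposition we conclude $g = z'h$ for some $z' \in Z(G(F))$ and $h \in H(F)$. Since $Ad(z')$ is trivial on $\g$, we have $Ad(g) = Ad(h)$ as automorphisms of $\g$, and in particular of $\gd$ and $Q(\gd)$. Any $H(F)$-invariant distribution on $Q(\gd)$ is automatically $Ad(h)$-invariant, hence $Ad(g)$-invariant. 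Thus the implication defining regularity holds trivially, without using the assumption on $R_{G,H}$.

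For the "in particular" clause, I would deduce the weaker hypothesis from the normalizer hypothesis by showing that every admissible $g$ normalizes $H(F)$. Using the identity $\theta(g) = \sigma(g)^{-1}$, which follows from $\sigma(g) = \theta(g^{-1}) = \theta(g)^{-1}$, we compute
\[
g^{-1}\theta(g) = g^{-1}\sigma(g)^{-1} = (\sigma(g)g)^{-1} \in Z(G(F)),
\]
so $g^{-1}\theta(g)$ centralizes $H(F)$. Consequently, for any $h \in H(F)$,
\[
\theta(ghg^{-1}) = \theta(g)\,h\,\theta(g)^{-1} = g\,(g^{-1}\theta(g))\,h\,(g^{-1}\theta(g))^{-1}\,g^{-1} = ghg^{-1},
\]
so $ghg^{-1} \in H(F)$. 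Hence $g$ normalizes $H(F)$, is therefore in $Z(G(F))H(F)$, and the first part of the proposition applies.

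There is no genuine obstacle here; the whole content is the calculation $\sigma(g)g = g\sigma(g)$ when either side is central, together with the reformulation of "$g$ normalizes $H$" in terms of $g^{-1}\theta(g)$. The role of the "admissible" hypothesis beyond $s(g) \in Z(G)$ (namely the $H$-admissibility of $Ad(g)|_{\g^\sigma}$) is simply not needed: regularity is forced by the centrality alone.
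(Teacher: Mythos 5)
Your proof is correct: the paper offers no written proof (it declares the proposition ``evident''), and your argument is exactly the intended one --- admissibility forces $s(g)=g\sigma(g)\in Z(G(F))$, hence $\sigma(g)g\in Z(G(F))$, hence $g\in Z(G(F))H(F)$ and $Ad(g)|_{\gd}=Ad(h)|_{\gd}$, making the regularity implication vacuous. Your verification of the ``in particular'' clause via the centrality of $g^{-1}\theta(g)$ is also the right (and complete) way to reduce it to the first statement.
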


\begin{proposition} $ $\\
(i) Any weakly linearly tame pair is regular. \\
(ii) A product of regular pairs is regular (see \cite{AG2},
Proposition 7.4.4).
\end{proposition}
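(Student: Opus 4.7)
The plan is to unpack the definitions and observe that both parts are essentially formal.

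For (i), let $(G,H,\theta)$ be weakly linearly tame and let $g \in G(F)$ be admissible. Since $s(g) \in Z(G)$, the automorphism $Ad(g)$ commutes with $\theta$ and hence preserves the $-1$ eigenspace $\gd = \g^\sigma$. The restriction $\tau := Ad(g)|_{\gd}$ is, by the admissibility hypothesis on $g$, an $H$-admissible transformation of $\gd$. The defining implication of weak linear tameness, applied to this $\tau$, is then word-for-word the defining implication of regularity applied to $g$: both assert that $\tau$-invariance of every $H(F)$-invariant distribution on $R_{G,H}$ forces $\tau$-invariance of every $H(F)$-invariant distribution on $Q(\gd)$. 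So (i) follows at once.

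For (ii), let $(G_i,H_i,\theta_i)$ be regular for $i=1,2$ and let $(G,H,\theta)$ denote their product. An admissible $g = (g_1,g_2) \in G(F)$ satisfies $s(g)=(s(g_1),s(g_2)) \in Z(G_1)\times Z(G_2)$, and closed $(H_1\times H_2)(F)$-orbits on $\gd_1 \oplus \gd_2$ are products of closed $H_i(F)$-orbits on the factors, so each $g_i$ is itself admissible. Since $\gd = \gd_1 \oplus \gd_2$ and $\mathcal{N}_{G,H} = \mathcal{N}_{G_1,H_1} \times \mathcal{N}_{G_2,H_2}$, one has $R_{G,H} = (R_{G_1,H_1}\times Q(\gd_2)) \cup (Q(\gd_1) \times R_{G_2,H_2})$. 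The strategy is to write any $H(F)$-invariant distribution $\xi$ on $Q(\gd)$ via a Fubini/kernel-theorem unraveling and then to apply the regularity of $(G_1,H_1,\theta_1)$ and $(G_2,H_2,\theta_2)$ successively, one variable at a time, transporting $Ad(g)$-invariance from $R_{G,H}$ to all of $Q(\gd)$.

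The only genuine obstacle is the product-structure bookkeeping in (ii): one must carefully match the two open strata of $R_{G,H}$ with the regularity hypothesis applied to the respective factor, and the Fubini/kernel argument must be run in both archimedean and non-archimedean settings. This careful tracking is exactly what \cite{AG2}, Proposition 7.4.4 carries out, so we invoke it as a black box. Part (i), by contrast, requires no auxiliary technology beyond matching the two definitions.
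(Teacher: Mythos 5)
Your proposal is correct and matches the paper's treatment: the paper declares both parts evident, and your part (i) is exactly the intended definitional unpacking (an admissible $g$ yields, by condition (ii) of admissibility, an $H$-admissible $\tau=Ad(g)|_{\gd}$, so the regularity implication is a special case of weak linear tameness). For part (ii) you defer, as the paper does, to \cite{AG2}, Proposition 7.4.4, so there is nothing further to compare.
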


In section \ref{GradRepDef} we will introduce terminology that will
help to verify the condition of Proposition \ref{SpecCrit}.

The importance of the notion of regular pair is demonstrated by
the following theorem.

\begin{theorem} \label{GoodHerRegGK}
Let $(G,H,\theta)$ be a good symmetric pair such that all its
descendants (including itself) are regular. Then it is a GK pair.
\end{theorem}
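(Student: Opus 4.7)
The plan is to argue by induction on $\dim G$, employing the generalized Harish-Chandra descent of \cite{AG2}. We need to show that every bi-$H(F)$-invariant distribution $\xi$ on $G(F)$ satisfies $\sigma(\xi) = \xi$. By Bernstein's localization principle, it suffices to check this in a neighborhood of each closed $H(F)\times H(F)$-orbit $O \subset G(F)$. The goodness hypothesis provides $\sigma(O) = O$, so $\sigma$ acts on the local distributions at $O$, and the question reduces to verifying that this action is trivial.

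Next I would pick $g \in O$, set $x := s(g)$ (semisimple by Proposition \ref{PropDescend}), and apply the Luna slice theorem through $x$. Via the symmetrization map $s: G \to G^\sigma$ and standard compatibilities with $\theta$, the local analytic neighborhood of $O$ at $g$ is modelled $H_x$-equivariantly on a neighborhood of $0$ in $\gd_x$, where $\gd_x$ denotes the $(-1)$-eigenspace of $\theta$ in $\g_x$, i.e.\ the Lie-algebra version of the descendant $(G_x,H_x,\theta|_{G_x})$. Under this identification, $\sigma$ transports to an $H_x$-admissible linear transformation $\tau$ of $\gd_x$ (this is precisely the reason the $H$-admissibility notion was introduced in the definition of regularity). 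If $x \notin Z(G)$, then $\dim G_x < \dim G$; every descendant of $(G_x,H_x,\theta|_{G_x})$ is also a descendant of $(G,H,\theta)$ and hence regular, and the descendant itself is regular by assumption, so by the induction hypothesis it is a GK pair, which provides the desired $\tau$-invariance on the slice. Summing over non-central $x$ reduces the problem to the case when the support of $\xi$ sits over the central closed orbits.

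For a central $x$, we have $g \in Z(G)H$ hence $g$ is admissible, $\gd_x = \gd$, and the local question on the slice is exactly: every $H(F)$-invariant distribution on $Q(\gd)$ is $\tau$-invariant, with $\tau = \operatorname{Ad}(g)|_{\gd}$. By the regularity of $(G,H,\theta)$, which is among the hypotheses, this is implied by the analogous statement on $R_{G,H} = Q(\gd) - \mathcal{N}_{G,H}$. Points of $R_{G,H}$ lie over non-zero semisimple orbits, so one further Luna-slice step replaces them with proper descendants, to which the induction already applies; this closes the argument.

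The main obstacle is the bookkeeping in the slice step: one must identify how $\sigma$ acts after transport to $\gd_x$ and check that the resulting transformation is genuinely $H_x$-admissible, so that the regularity hypothesis on the descendant can be invoked. This is the technical core of the framework built in Sections 2 and 7 of \cite{AG2}; once one accepts that dictionary, the induction plus the regularity condition combine cleanly, and the goodness hypothesis is only used once, at the very first step, to ensure that $\sigma$ preserves each closed bi-$H(F)$-orbit.
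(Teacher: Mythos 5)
The paper itself does not prove this theorem; it cites \cite{AG2}, Theorem 7.4.5. Your sketch follows the same general strategy as that proof (localization to closed orbits, Luna slices at $x=s(g)$, the dichotomy between $R$ and the nilpotent cone), but the inductive step has a genuine gap. For non-central $x$ you invoke the induction hypothesis to conclude that the descendant $(G_x,H_x,\theta|_{G_x})$ is a GK pair. The induction hypothesis is the theorem itself, whose hypotheses are goodness \emph{plus} regularity of all descendants; you verify the second (modulo the unproved transitivity of the descendant relation) but not the first, and goodness does not pass to descendants --- this is precisely why the theorem is stated with ``all descendants regular'' rather than proved by a naive induction on the GK property. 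Moreover, even granting that $(G_x,H_x)$ were a GK pair, that is a statement about $H_x(F)\times H_x(F)$-invariant distributions on the group $G_x(F)$, whereas the slice argument requires that $H_x(F)$-invariant distributions on the linear space $Q(\gd_x)$ be invariant under the transported automorphism $Ad(g)$; these are not the same statement, and passing from one to the other is itself a descent.

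A symptom of the gap is that your argument uses regularity only for the original pair $(G,H,\theta)$, at the central orbits; if it were correct, the hypothesis that all descendants are regular would be redundant. The correct bookkeeping, as in \cite{AG2}, is to run the induction on the linear statement: for every descendant $(G_x,H_x)$ and every admissible $g$ produced by the descent, every $H_x(F)$-invariant distribution on $Q(\gd_x)$ is $Ad(g)$-invariant. This is established by exactly the mechanism you describe only in the central case: linear Harish-Chandra descent on $R_{G_x,H_x}$ reduces to strictly smaller descendants, and regularity of $(G_x,H_x)$ --- used at \emph{every} descendant --- extends the invariance from $R_{G_x,H_x}$ to all of $Q(\gd_x)$, i.e.\ across the nilpotent cone. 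Goodness of $(G,H)$ is used not once but throughout, to guarantee that the transported automorphism on each slice is $H_x$-admissible so that regularity is applicable; the group-level GK statement is then deduced at the end from these linear statements.
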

For proof see \cite{AG2}, Theorem 7.4.5.
\section{Main Results} \label{MainRes}
Here we formulate the main results of the paper and explain how
they follow from the rest of the paper.

\setcounter{lemma}{0}

\begin{definition}
A quadratic space is a linear space with a fixed non-degenerate
quadratic form.

Let $F'$ be an extension of $F$ and $V$ be a quadratic space over
it. We denote by $O(V)$ the canonical algebraic group such that
its $F$-points form the group of orthogonal transformations of
$V$.
\end{definition}

\begin{definition}
Let $D$ be a field with an involution $\tau$. A hermitian space
over $(D,\tau)$  is a linear space over $D$ with a fixed
non-degenerate hermitian form.

Suppose that $D$ is an extension of $F$ and $F \subset D^{\tau}$.
Let $V$ be a hermitian space over $(D,\tau)$. We denote by $U(V)$
the canonical algebraic group such that its $F$-points form the
group of unitary transformations of $V$.
\end{definition}

\begin{definition}
Let $G$ be a reductive group and $\eps \in G$ be an element of
order 2. We denote by $(G,G_{\eps})$ the symmetric pair defined by
the involution $x \mapsto \eps x \eps$.
\end{definition}

The following lemma is straightforward.
\begin{lemma}
Let $V$ be a quadratic space.\\
(i) Let $\eps \in GL(V)$ be an element of order 2. Then
$GL(V)_{\eps} \cong GL(V_1) \times GL(V_2)$ for some decomposition
$V=V_1 \oplus V_2$.\\
(ii) Let $\eps \in O(V)$ be an element of order 2. Then
$O(V)_{\eps} \cong O(V_1) \times O(V_2)$ for some orthogonal
decomposition $V=V_1 \oplus V_2$.\\
(iii) Let $V$ be a hermitian space.\\
 Let $\eps \in U(V)$ be an element of order 2. Then
$U(V)_{\eps} \cong U(V_1) \times U(V_2)$ for some orthogonal
decomposition $V=V_1 \oplus V_2$.
\end{lemma}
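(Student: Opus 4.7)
The plan is to use the standard decomposition of a space under an order $2$ operator into its $\pm 1$ eigenspaces, and then check compatibility with the relevant bilinear form.

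First I would set $V_1 = \ker(\eps - \id)$ and $V_2 = \ker(\eps + \id)$. Since $\eps^2 = \id$ and the characteristic is zero, we have the direct sum decomposition $V = V_1 \oplus V_2$ via the projectors $\tfrac{1}{2}(\id \pm \eps)$. For part (i), an element $g \in GL(V)$ commutes with $\eps$ if and only if it preserves each eigenspace, so $GL(V)_\eps = GL(V_1) \times GL(V_2)$.

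For parts (ii) and (iii), the key extra point is that the decomposition $V = V_1 \oplus V_2$ is orthogonal with respect to the given form $\langle \cdot, \cdot \rangle$. For $v \in V_1$ and $w \in V_2$, and using that $\eps \in O(V)$ (resp.\ $\eps \in U(V)$) preserves the form, we compute
\[
\langle v, w \rangle = \langle \eps v, \eps w \rangle = \langle v, -w \rangle = -\langle v, w \rangle,
\]
so $\langle v, w \rangle = 0$. Thus the restriction of the form to each $V_i$ is non-degenerate and the decomposition is orthogonal. An element $g \in O(V)$ (resp.\ $U(V)$) commuting with $\eps$ preserves both $V_1$ and $V_2$ and the restricted form on each, which identifies $O(V)_\eps$ with $O(V_1) \times O(V_2)$ and $U(V)_\eps$ with $U(V_1) \times U(V_2)$.

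There is no real obstacle here; the lemma is called straightforward in the text and indeed the only thing to verify beyond linear algebra is the orthogonality of the eigenspace decomposition, which follows from the one-line computation above. I would present the three parts together, treating (i) as the case without form and (ii), (iii) as the two parallel cases where the form-preservation argument applies verbatim.
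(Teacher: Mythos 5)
Your proof is correct and is exactly the eigenspace argument the paper has in mind; the paper itself gives no proof, simply declaring the lemma straightforward. The one-line computation showing $V_1=\ker(\eps-\id)$ and $V_2=\ker(\eps+\id)$ are orthogonal (using that $\eps$ preserves the form and that $2$ is invertible in characteristic zero) is precisely the only point needing verification, and you have it.
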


\begin{theorem} 
Let $V$ be a quadratic space over $F$. Then all the descendants of
the pair $(O(V),O(V)_{\eps})$ are regular.
\end{theorem}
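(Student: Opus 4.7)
The plan is to exploit the general machinery of Sections \ref{PrelNot} and the reductions developed earlier in the paper: to show all descendants of $(O(V),O(V)_\eps)$ are regular, I would first classify these descendants explicitly and then observe that each one splits as a product of symmetric pairs from the list announced in the abstract, each of which is already known to be regular by the main regularity results of Section \ref{SecReg}. Combined with the fact that a product of regular pairs is regular, this yields the claim.

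More concretely, after invoking the structure lemma to write $O(V)_\eps\cong O(V_1)\times O(V_2)$ for an orthogonal decomposition $V=V_1\oplus V_2$, the first step is to compute $G_x$ and $H_x$ where $x=s(g)$ is a semisimple element of $G^\sigma$ with $HgH$ closed. This is precisely the systematic computation promised in Section \ref{CompDes}. The semisimple element $x$ acts on $V$ preserving the quadratic form, and its eigenspace (or generalized eigenspace) decomposition over $\oF$ together with the interaction with $\eps$ produces, piece by piece, the centralizers. The resulting list of descendants, after grouping eigenvalues into Galois orbits and pairing $\lambda$ with $\lambda^{-1}$, decomposes into a product whose factors are, up to base change $G_{E/F}$, the pairs
\[
(O(V'),O(V'_1)\times O(V'_2)),\quad (GL(V'),O(V')),\quad (GL(V'),U(V')),\quad (U(V'),O(V')),
\]
together with trivial (diagonal) pairs $(L,L)$ coming from the centralizer of eigenvalues $\lambda\neq\pm 1$ that do not meet $\eps$-dual eigenspaces; trivial pairs are tautologically regular via Proposition \ref{TrivReg}.

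Once the descendants are written as such products, regularity of every factor is provided by the results of Section \ref{SecReg}, which handles exactly these classical pairs by verifying the inequality $\tr(\ad d(x)|_{\h_x})<\dim Q(\gd)$ from Proposition \ref{SpecCrit} (yielding weak linear tameness, hence regularity) for each of them. Closing the argument, regularity is preserved under products, so each descendant is regular, proving the theorem.

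The main obstacle is the descendant computation itself: one must verify that every closed $H\times H$-orbit in $G$ produces, via the symmetrization map $s$, a semisimple element whose centralizer pair really is isomorphic to a product drawn from the above list, with no surprise factor appearing. This requires a careful case analysis over the Galois orbit of each eigenvalue of $x$ on $V$ (the cases $\lambda=\pm 1$, $\lambda\in F^\times\setminus\{\pm 1\}$, $\lambda\notin F$ with $\lambda\bar\lambda=1$, etc.), tracking both the quadratic form on the corresponding $x$-isotypic subspace and the action of $\eps$. All other steps in the argument are either direct invocations of previously quoted results or routine.
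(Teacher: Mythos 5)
Your proposal is correct and takes essentially the same route as the paper: compute the descendants of $(O(V),O(V)_{\eps})$ by analyzing the centralizer of the semisimple element $x=s(g)$ case by case over the factorization of its minimal polynomial (this is exactly Theorem \ref{CompDesO_OtO}), then invoke regularity of each factor from Section \ref{SecReg} together with closure of regularity under products. The only discrepancy is in your predicted list of descendant types: the paper finds exactly the factors $(GL(W),O(W))$, $(U(W_E),O(W))$ and $(O(W),O(W)_{\eps})$, whereas you also list $(GL(W),U(W))$ and trivial diagonal pairs, which do not actually arise here (e.g.\ the pair $(\Delta GL(V_{Q,x}^{0}),\Delta O(V_{Q,x}^{0}))$ from the case $P=Q\,inv(Q)$ is identified with $(GL(W),O(W))$ rather than being a trivial pair) --- but since every pair on your longer list is also regular by the paper's results, this does not affect the validity of the argument.
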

\begin{proof}
By Theorem \ref{CompDesO_OtO} below, the descendants of the pair
$(O(V),O(V)_{\eps})$ are products of pairs of the types\\
(i) $(GL(W),O(W))$ for some quadratic space $W$ over some field
$F'$ that extends $F$\\
(ii) $(U(W_E) , O(W))$ for some quadratic space $W$ over some
field $F'$ that extends $F$, and some quadratic extension $E$ of
$F'$. Here, $W_E:=W \otimes _{F'}E$ is the extension of scalars
with the corresponding hermitian structure.\\
(iii) $(O(W),O(W)_{\eps})$ for some quadratic space $W$ over $F$.

The pair (i) is regular by Theorem \ref{GL_O} below. The pair (ii)
is regular by subsection \ref{U_reg} below. The pair (iii) is
regular by subsection \ref{O_OtO} below.
\end{proof}
\begin{corollary} 
Suppose that $F=\C$ and Let $V$ be a quadratic space over it. Then
the pair $(O(V),O(V)_{\eps})$ satisfies GP1.
\end{corollary}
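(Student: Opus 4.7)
The plan is to assemble the corollary from the theorem immediately preceding it together with the general machinery recalled in Section \ref{SymPar}. Schematically: connected symmetric pair over $\C$ $\Rightarrow$ good (Proposition \ref{GoodCrit}); good $+$ all descendants regular $\Rightarrow$ GK pair (Theorem \ref{GoodHerRegGK}); GK pair $\Rightarrow$ GP2 (the remark following the definition, which invokes Theorem \ref{DistCrit}); GP2 $\Leftrightarrow$ GP1 in our setting (Proposition \ref{GP2GP1}(ii)).

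The first step is to verify that $(O(V),O(V)_\eps)$ is good. By the preceding lemma, $\eps$ gives an orthogonal decomposition $V=V_1 \oplus V_2$ with $O(V)_\eps \cong O(V_1)\times O(V_2)$, so the homogeneous space $O(V)/O(V)_\eps$ can be identified with the variety of non-degenerate subspaces of $V$ of a fixed dimension, which over $\C$ is a Zariski-open subset of an ordinary Grassmannian and hence connected. Proposition \ref{GoodCrit} then shows the pair is good.

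Next, the theorem proved just above states that every descendant of $(O(V),O(V)_\eps)$ is regular (and the pair itself is one of its own descendants, corresponding to $g=e$ in Proposition \ref{PropDescend}). Combining goodness with regularity of all descendants, Theorem \ref{GoodHerRegGK} tells us that $(O(V),O(V)_\eps)$ is a GK pair. By the remark following the definition of GK pair, this gives GP2 for $(O(V),O(V)_\eps)$. Finally, since $O(V)_\eps \subset O(V)$, Proposition \ref{GP2GP1}(ii) upgrades GP2 to GP1, yielding the claim.

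I do not expect a serious obstacle: the argument is essentially a bookkeeping assembly of tools prepared in Section \ref{SymPar} together with the theorem just proved. The only non-trivial ingredient is the verification of connectedness of $O(V)/O(V)_\eps$ over $\C$, but this reduces to the standard fact that the variety of non-degenerate subspaces of a given dimension in a complex quadratic space is irreducible.
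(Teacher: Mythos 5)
Your proof is correct and follows essentially the same route as the paper: goodness via Proposition \ref{GoodCrit}, then Theorem \ref{GoodHerRegGK} to get a GK pair, then GP2 via Theorem \ref{DistCrit}, and finally GP1 via Proposition \ref{GP2GP1}. Your explicit verification that $O(V)/O(V)_\eps$ is connected over $\C$ (as an open subvariety of a Grassmannian) is a detail the paper leaves implicit, but it is correct and does not change the argument.
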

\begin{proof}
This pair is good by Proposition \ref{GoodCrit} and all its
descendants are regular. Hence by Theorem \ref{GoodHerRegGK} it is
a GK pair. Therefore by Theorem \ref{DistCrit} it satisfies GP2.
Now, by Proposition \ref{GP2GP1}, it satisfies GP1.
\end{proof}
\begin{theorem}
Let $D/F$ be a quadratic extension and $\tau \in Gal(D/F)$ be the
non-trivial element. Let $V$ be a hermitian space over $(D,\tau)$.
Then all the descendants of the pair $(U(V),U(V)_{\eps})$ are
regular.
\end{theorem}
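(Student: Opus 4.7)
The plan is to mirror exactly the template just used for the $(O(V),O(V)_{\eps})$ theorem. First I would invoke the unitary counterpart of Theorem \ref{CompDesO_OtO} (the classification to be proved in Section \ref{CompDes}), which should express every descendant of $(U(V),U(V)_{\eps})$ as a product of factors, each arising from a different type of orbit of eigenvalues $\{\alpha,\alpha^{-1}\}$ of the semisimple element $x = s(g)$ acting on $V$: first, pairs of the form $(GL(W),U(W))$ over some extension of $F$ together with a quadratic extension of scalars (eigenvalue orbits with $\alpha\neq\alpha^{-1}$ and $\alpha$ not absorbed by Galois); second, pairs that collapse to a diagonal situation and are therefore regular by Proposition \ref{TrivReg} (when the Galois action identifies the eigenvalue with its inverse); and third, smaller self-referential pairs $(U(W),U(W)_{\eps})$ coming from the $\pm 1$ eigenspaces of $x$.

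Once such a classification is available, the statement reduces, via the fact that a product of regular pairs is regular, to verifying regularity of each factor. The factor $(GL(W),U(W))$ is one of the five main pairs of the paper and its regularity is part of the results of Section \ref{SecReg}, so can be cited directly. The diagonal-type factor is immediately regular by Proposition \ref{TrivReg}, since there the normalizer of $H(F)$ sits inside $Z(G(F))H(F)$. The substantive content lies entirely in the third factor, $(U(W),U(W)_{\eps})$ itself.

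For this self-referential factor my plan is to establish the stronger property of weak linear tameness and then invoke the implication that weakly linearly tame pairs are regular. Weak linear tameness in turn is to be deduced from the trace criterion of Proposition \ref{SpecCrit}: for each nilpotent $x\in\gd$, extend to an $sl_2$-triple $(x,d(x),x_-)$ with $d(x)\in\h$ and $x_-\in\gd$, decompose $\g$ into weight spaces under $\ad(d(x))$, and use the $\Z/2\Z$-grading $\g=\h\oplus\gd$ to rewrite both $\tr(\ad(d(x))|_{\h_x})$ and $\dim Q(\gd)$ as combinatorial invariants of the resulting $sl_2$-representation, organized by the action of $\eps$ and of the Galois involution $\tau$. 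The language to be introduced in Section \ref{GradRepDef} is presumably designed precisely to make this bookkeeping clean.

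The main obstacle is verifying the strict inequality $\tr(\ad(d(x))|_{\h_x})<\dim Q(\gd)$ uniformly in the dimensions and signatures of the hermitian decomposition $W=W_1\oplus W_2$. The unitary involution couples more tightly with $\tau$ than the orthogonal involution couples with transposition, so the slack in the inequality is narrower, and one has to check that no small $sl_2$-type in $\g$ attains equality. If the combinatorics of Section \ref{GradRepDef} reduce the inequality to a pointwise comparison on each irreducible $sl_2$-isotypic component of the graded representation attached to $V$, the verification should be uniform in the signatures of $W_1$ and $W_2$, and the argument concludes in the same shape as the orthogonal counterpart.
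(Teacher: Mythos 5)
Your high-level template (classify the descendants, use that a product of regular pairs is regular, check each factor) is the paper's template, but two of your concrete steps do not survive contact with the actual situation. First, your guessed descendant list is materially incomplete: Theorem \ref{CompDesU_UtU} produces, besides $(GL(W),U(W))$, diagonal pairs, and the self-referential $(U(W),U(W)_{\eps})$, also factors of the form $(G_{E/F},G)$ (e.g.\ $(GL(W_{D'}),GL(W))$ and $(U(W_E),U(W))$, handled by Theorem \ref{2RegPairs}) and, crucially, $(GL(W),GL(W)_{\eps})$ over $D$ --- the Jacquet--Rallis pair --- whose regularity is a separate nontrivial input (Theorem \ref{RJR}). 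None of these fall under your ``diagonal, hence Proposition \ref{TrivReg}'' category, and the Jacquet--Rallis factor in particular cannot be waved away.

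Second, your plan for the self-referential factor $(U(W_1\oplus W_2),U(W_1)\times U(W_2))$ --- verify the trace inequality of Proposition \ref{SpecCrit} ``uniformly in the dimensions and signatures'' --- fails at exactly the point you flag as the main obstacle. The strict inequality $\tr(\ad(d(x))|_{\h_x})<\dim Q(\gd)$ is \emph{not} uniform in the dimensions: already for the complexified model $(GL_3,GL_2\times GL_1)$ a nilpotent $x\in\gd$ corresponding to a single Jordan block of length $3$ gives $\h_x$ spanned by $1$ and $x^2$, so $\tr(\ad(d(x))|_{\h_x})=0+4=4=\dim Q(\gd)$, i.e.\ equality, and the criterion of Proposition \ref{SpecCrit} is simply unavailable when $\dim W_1\neq\dim W_2$. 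The paper instead splits: if $W_1\cong W_2$ it does \emph{no} new defect computation, observing that the pair is a form of $(GL(W\oplus W),GL(W)\times GL(W))$ and importing negative normalized defect from Theorem \ref{RJR} via Lemma \ref{NegDefAlg} (the defect condition may be checked after extension of scalars, where the unitary group becomes a general linear group --- so your worry that ``the unitary involution couples more tightly with $\tau$'' is moot, but so is the need for a fresh computation); if $W_1\not\cong W_2$ it abandons the defect criterion entirely and uses the normalizer argument of Proposition \ref{TrivReg} (see subsection \ref{U_reg}). Without this case split and without the Jacquet--Rallis input your argument has a genuine hole.
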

\begin{proof}
By theorem \ref{CompDesU_UtU} below, the descendants of the pair
$(U(V),U(V)_{\eps})$ are products of pairs of the types\\
(a) $(G \times G, \Delta G)$ for some reductive group $G$.\\
(b) $(GL(W),U(W))$ for some hermitian space $W$ over some
extension $(D',\tau')$ of $(D,\tau)$\\
(c) $(G_{E/F},G)$ for some reductive group $G$ and some quadratic
extension $E/F$.\\
(d) $(GL(W),GL(W)_{\eps})$ where $W$ is a linear space over $D$
and $\eps \in GL(W)$ is an element of order $\leq 2$.\\
(e) $(U(W),U(W)_{\eps})$ where $W$ is a hermitian space over
$(D,\tau)$.

The pairs (a) and (c) are regular by Theorem \ref{2RegPairs}
below. The pairs (b) and (e) are regular by subsection \ref{U_reg}
below. The pair (d) is regular by Theorem \ref{RJR} below.
\end{proof}
\begin{theorem} \label{Main_GL_O}
Let $V$ be a quadratic space over $F$. Then all the descendants of
the pair $(GL(V),O(V))$ are weakly linearly tame. In particular,
this pair is tame.
\end{theorem}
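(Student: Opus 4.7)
The plan is to reduce the claim to a finite-dimensional numerical check for each descendant, using the machinery from the earlier sections. By Theorem \ref{LinDes}, it suffices to prove that every descendant of $(GL(V),O(V))$ (including $(GL(V),O(V))$ itself) is weakly linearly tame. So I would first identify the descendants and then verify the hypothesis of Proposition \ref{SpecCrit} for each of them.

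To identify the descendants, note that the defining involution is $\theta(g)=(g^{t})^{-1}$, where $t$ is the transpose with respect to the quadratic form. Hence $\sigma(g)=g^{t}$ and $s(g)=gg^{t}$ is a symmetric element of $GL(V)$. For any $g$ with closed orbit $O(V)gO(V)$, the element $s(g)$ is semisimple by Proposition \ref{PropDescend}, and its spectral decomposition induces a decomposition $V=\bigoplus V_{i}$ over suitable finite extensions of $F$, on each summand of which the form is non-degenerate. The centralizer pair then splits as a product of pairs of the same shape $(GL(W),O(W))$ over those extensions. This is the systematic descent computation carried out in Section \ref{CompDes}.

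Next, the criterion of Proposition \ref{SpecCrit} is additive under direct products: a nilpotent element in the product $\gd_{1}\oplus\gd_{2}$ decomposes componentwise as $x=x_{1}\oplus x_{2}$, and then both $\tr(\ad(d(x))|_{\h_{x}})$ and $\dim Q(\gd)$ split as sums over the two factors. Consequently a product of pairs satisfying the criterion still satisfies it, and so it is enough to verify weak linear tameness for a single pair $(GL(W),O(W))$ over an arbitrary finite extension of $F$.

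The hard part is the verification of the inequality $\tr(\ad(d(x))|_{\h_{x}})<\dim Q(\gd)$ for an arbitrary nilpotent symmetric $x$. I would extend $x$ by Jacobson--Morozov to an $sl_{2}$-triple $(x,d(x),x_{-})$ with $d(x)\in o(W)$ and $x_{-}$ symmetric, turning $W$ into a finite-dimensional $sl_{2}$-module. Decomposing $W=\bigoplus_{i}V(n_{i})$ into irreducibles, one identifies $\g=gl(W)\cong W\otimes W^{*}$ as an $sl_{2}$-module (using the form to identify $W^{*}$ with $W$) and splits it as $\g=\gd\oplus\h$ via symmetric versus antisymmetric tensors. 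Then $\h_{x}$ is the kernel of $\ad(x)$ on $\h$, which by $sl_{2}$-theory is spanned by the highest weight vectors of the irreducible summands of $\h$, and the trace of $\ad(d(x))$ on $\h_x$ is the sum of their highest weights. The inequality then reduces to a short combinatorial statement about the partition $(n_{i}+1)$ of $\dim W$, which is made transparent by the graded-representation language of Section \ref{GradRepDef}. This combinatorial step is the main obstacle, but once the right formalism is in place the computation is elementary, and the "In particular" clause follows immediately from Theorem \ref{LinDes}.
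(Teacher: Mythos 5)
Your proposal follows the paper's proof essentially verbatim: the descendants are computed exactly as you describe (Theorem \ref{CompDesGL_O}, via the spectral decomposition of the symmetric semisimple element $s(g)=gg^{t}$ and the resulting orthogonal splitting of $V$ over the fields $F[\xi]/P_i$), and the trace inequality of Proposition \ref{SpecCrit} for a single factor $(GL(W),O(W))$ is verified in Theorem \ref{GL_O} by the same identification $\g\cong W\otimes W^{*}$ with its symmetric/antisymmetric splitting, packaged in the ``defect'' language of Section \ref{GradRepDef}. The only point to watch is that the strict inequality of Proposition \ref{SpecCrit} actually fails for one-dimensional factors (there $Q(\gd)=0$ and both sides vanish), so these must be disposed of directly --- as the paper does by noting weak linear tameness is obvious when $\dim W\leq 1$.
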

\begin{proof}
By Theorem \ref{CompDesGL_O} below, the descendants of the pair
$(GL(V),O(V))$ are products of pairs of the type $(GL(W),O(W))$
for some quadratic space $W$ over some field $F'$ that extends
$F$. By Theorem \ref{GL_O} below, these pairs are weakly linearly
tame. Now, the pair $(GL(V),O(V))$ is tame by Theorem
\ref{LinDes}.
\end{proof}
\begin{corollary} 
Suppose that $F=\C$ and Let $V$ be a quadratic space over it. Then
the pair $(GL(V),O(V))$ is GP1.
\end{corollary}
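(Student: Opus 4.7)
The plan is to chain together the implications already assembled in the paper. By Theorem \ref{Main_GL_O} the pair $(GL(V),O(V))$ is tame, so the only remaining ingredient to reach GP1 is to promote tameness to a GK property and then to GP1 using the transpose symmetry available on $GL(V)$.

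First I would check that $(GL(V),O(V))$ is good over $F=\C$. The homogeneous space $GL(V)/O(V)$ is identified with the variety of nondegenerate symmetric bilinear forms on $V$, on which $GL(V)$ acts transitively over $\C$; since $GL(V)(\C)$ is connected, so is $GL(V)/O(V)$. Hence the pair is a connected symmetric pair over $\C$, and Proposition \ref{GoodCrit} gives that it is good.

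Next, combining goodness with tameness (Theorem \ref{Main_GL_O}): every $H(F)\times H(F)$-invariant distribution on $G(F)$ is $\sig$-invariant, where $\sig(g)=\theta(g^{-1})$ corresponds to the involution $\theta(g)=(g^{t})^{-1}$ up to an inner twist, so by the remark following the definition of GK pairs the pair is a GK pair. Applying Theorem \ref{DistCrit} then yields that $(GL(V),O(V))$ satisfies GP2.

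Finally, since $O(V)\subset GL(V)$ is stable under the transpose involution, Proposition \ref{GP2GP1}(i) upgrades GP2 to GP1. The main conceptual obstacle, namely verifying the vanishing of obstructive invariant distributions (i.e.\ the regularity and tameness of all descendants), has already been dispatched by Theorem \ref{Main_GL_O}; the present corollary is essentially an assembly of the results of the preceding subsections with the $F=\C$ simplification that guarantees goodness for free.
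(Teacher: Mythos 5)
Your proof is correct and follows essentially the same chain the paper uses for the analogous corollary on $(O(V),O(V)_{\eps})$: goodness from Proposition \ref{GoodCrit} (connectedness of $GL(V)/O(V)$ over $\C$), then GK, then GP2 via Theorem \ref{DistCrit}, then GP1 via Proposition \ref{GP2GP1}(i). The only cosmetic difference is that you pass from goodness to the GK property through tameness (Theorem \ref{Main_GL_O} plus the remark that a good tame pair is GK), whereas the paper's template invokes Theorem \ref{GoodHerRegGK} via regularity of all descendants; both routes are available in the paper and equivalent here.
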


\begin{theorem} 
Let $D/F$ be a quadratic extension and $\tau \in Gal(D/F)$ be the
non-trivial element. Let $V$ be a hermitian space over $(D,\tau)$.
Then all the descendants of the pair $(GL(V),U(V))$ are weakly
linearly tame. In particular, this pair is tame.
\end{theorem}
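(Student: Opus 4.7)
The plan is to mirror, step for step, the proof of Theorem \ref{Main_GL_O}, substituting the hermitian analogue at each step. The scheme has three ingredients: a descendant computation, a weak linear tameness statement for the atomic descendants, and the reduction Theorem \ref{LinDes}.

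First I would appeal to the companion theorem in section \ref{CompDes} (the natural analogue of Theorem \ref{CompDesGL_O}) asserting that every descendant of $(GL(V),U(V))$ is a product of pairs of the form $(GL(W),U(W))$, where $W$ is a hermitian space over some extension $(D',\tau')$ of $(D,\tau)$; when $\tau'$ is trivial, the factor degenerates via the usual extension-of-scalars identification to a pair $(GL(W) \times GL(W),\Delta GL(W))$. The structural input behind this statement is a centralizer calculation for a $\sigma$-fixed semisimple element $x = s(g)$: decompose $V$ into $x$-generalized eigenspaces, observe that the Galois involution $\tau$ permutes eigenvalues, and group each $\tau$-fixed eigenspace (respectively each $\tau$-swapped pair of eigenspaces) into a single block carrying the induced hermitian form. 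The stabilizer of $x$ in the symmetric pair then visibly factors through the product description claimed.

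Next I would invoke subsection \ref{U_reg}, where the atomic pair $(GL(W),U(W))$ is shown to be weakly linearly tame via the $sl_2$-weight criterion of Proposition \ref{SpecCrit}. For any nilpotent hermitian $x \in \gd$, one extends $x$ to an $sl_2$-triple $(x,d(x),x_-)$ with $d(x) \in \h$ and checks the trace inequality $\tr(\ad(d(x))|_{\h_x}) < \dim Q(\gd)$, the calculation being organized by the graded representation formalism of section \ref{GradRepDef}. Since weak linear tameness is stable under products, the factorization of each descendant as a product of atomic pairs $(GL(W_i),U(W_i))$ propagates the property to every descendant of $(GL(V),U(V))$, including the pair itself; Theorem \ref{LinDes} then delivers tameness.

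The principal obstacle is the trace inequality underlying the weak linear tameness of the atomic pair. The $sl_2$-triple has to be chosen compatibly with both the $\Z/2\Z$-grading induced by $\theta$ and the Galois involution $\tau$, which is the technical point distinguishing this case from the split one. Once such a triple is produced, the inequality reduces to a partition-theoretic weight count, formally analogous to the one used for $(GL(W),O(W))$ in Theorem \ref{GL_O}, but with the multiplicities of the $sl_2$-weights read off from the $\tau$-hermitian structure on $W$ rather than from its symmetric bilinear form.
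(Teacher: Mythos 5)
Your overall scheme --- a descendant computation, weak linear tameness of the atomic factors, stability under products, and the reduction via Theorem \ref{LinDes} --- is exactly the paper's, and your description of the descendant computation matches Theorem \ref{CompDesGL_U} in substance (the paper lists the split factors $(GL(W)\times GL(W),\Delta GL(W))$ separately as type (i), disposed of by Theorem \ref{2RegPairs}).

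The gap is in the atomic step. You propose to prove that $(GL(W),U(W))$ is weakly linearly tame by a direct rational computation: produce an $sl_2$-triple compatible with both the grading $\theta$ and the Galois involution $\tau$, and verify the trace inequality of Proposition \ref{SpecCrit} with multiplicities read off from the hermitian structure, modeled on the computation for $(GL(W),O(W))$. You correctly flag the compatibility of the triple with $\tau$ as the ``principal obstacle,'' but you do not resolve it, and the analogue you reach for is the wrong one: $(GL(W),U(W))$ is a form of the group case $(GL\times GL,\Delta GL)$, not of $(GL,O)$, so the relevant defect estimate is $\de(\pi\oplus\opi)<0$ (Proposition \ref{pi_pibar}), not $\de(\pi\otimes\opi)<-1$ (the computation behind Theorem \ref{GL_O}). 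The paper's subsection \ref{U_reg} sidesteps the rationality issue entirely: negative normalized defect is a condition on the graded Lie algebra which, by Lemma \ref{NegDefAlg}, may be checked after extension of scalars, where the pair becomes $(GL(W')\times GL(W'),\Delta GL(W'))$ and Theorem \ref{2RegPairs} applies verbatim. That reduction is the missing idea; without it (or an actual execution of your rational $sl_2$ count, which you only sketch) the key step of your argument is not established.
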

\begin{proof}
By Theorem \ref{CompDesGL_U} below, all the descendants of the
pair
$(GL(V),U(V))$ are products of pairs of the types\\
(i) $(GL(W) \times GL(W), \Delta GL(W))$ for some linear space $W$
over some field $D'$ that extends $D$\\
(ii)  $(GL(W),U(W))$ for some hermitian space $W$ over some
$(D',\tau')$ that extends $(D,\tau)$.

The pair (i) is weakly linearly tame by Theorem \ref{2RegPairs}
below and the pair (ii) is weakly linearly tame by subsection
\ref{U_reg} below. Now, the pair $(GL(V),U(V))$ is tame by Theorem
\ref{LinDes}.
\end{proof}
\begin{theorem} 
Let $V$ be a quadratic space over $F$. Let $D/F$ be a quadratic
extension and $\tau \in Gal(D/F)$ be the non-trivial element. Let
$V_D:=V \otimes_F D$ be its extension of scalars with the
corresponding hermitian structure. Then all the descendants of the
pair $(U(V_D),O(V))$ are weakly linearly tame. In particular, this
pair is tame.
\end{theorem}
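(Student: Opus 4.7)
The plan is to mimic the template used for Theorems \ref{Main_GL_O} and its companions: classify all descendants of $(U(V_D),O(V))$ via a structural result (the analogue of Theorems \ref{CompDesO_OtO}, \ref{CompDesU_UtU}, \ref{CompDesGL_O}, \ref{CompDesGL_U}) proved in Section \ref{CompDes}; then quote the weak linear tameness of each descendant type from earlier parts of the paper; and finally apply Theorem \ref{LinDes} to conclude that $(U(V_D),O(V))$ itself is tame.

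To enumerate the descendants I would analyse a semisimple element $x = s(g)$ coming from $g \in U(V_D)(F)$ with closed $O(V)\times O(V)$-orbit. On $U(V_D)$ the involution $\theta$ is realised by the nontrivial Galois element $\tau \in \mathrm{Gal}(D/F)$, so $\sigma(g)=\tau(g^{-1})$ and $x = g \cdot \tau(g^{-1})$ lies in $U(V_D)(F)$ and satisfies $\tau(x)=x^{-1}$. Decomposing $V_D$ into generalised eigenspaces of $x$ and grouping each eigenvalue $\lambda$ with its partner $\tau(\lambda)=\lambda^{-1}$, one expects the descendant $(U(V_D)_x, O(V)_x, \theta|_{U(V_D)_x})$ to factor as a product of pairs of two kinds:
(i) $(GL(W_{D'}), O(W))$, coming from a $\tau$-orbit $\{\lambda,\lambda^{-1}\}$ with $\lambda\neq\pm 1$, where $W$ is a quadratic space over some finite extension $F'/F$ and $D' = D\otimes_F F'$;
(ii) $(U(W_{D'}), O(W))$, coming from the $\pm 1$-eigenspaces, which are again of the same shape as the original pair but over a possibly larger quadratic space.

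Granting this classification, the descendants of type (i) are weakly linearly tame by Theorem \ref{GL_O} (the same input as in the proof of Theorem \ref{Main_GL_O}), and the descendants of type (ii) are weakly linearly tame by Subsection \ref{U_reg} (which underlies the regularity of $(U(W_{D'}),O(W))$ used above in the treatment of $(O(V),O(V)_{\eps})$). Combining these inputs, Theorem \ref{LinDes} then gives at once that $(U(V_D),O(V))$ is tame, which is the desired conclusion.

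The main obstacle is the first step: verifying the explicit list of descendants. One has to track how $\theta$ acts on the $x$-isotypic decomposition of $V_D$ and how the hermitian form restricts to each isotypic component. The subtle point is that on a block corresponding to a non-real $\tau$-orbit $\{\lambda,\lambda^{-1}\}$ the involution $\theta$ swaps the two eigenspaces, so that its fixed points form a diagonal copy of $GL(W)$ rather than a unitary subgroup; combined with the $F$-rational structure this then yields the pair $(GL(W_{D'}),O(W))$. Carrying out this bookkeeping carefully is the only real work; once it is done, the rest of the argument is a direct appeal to results already established.
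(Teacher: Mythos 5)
Your overall template is exactly the paper's: classify the descendants of $(U(V_D),O(V))$ (this is Theorem \ref{CompDesU_O}), check that each type occurring is weakly linearly tame using Theorem \ref{GL_O} and subsection \ref{U_reg}, and conclude tameness via Theorem \ref{LinDes}. However, the classification itself --- which you rightly identify as ``the only real work'' and then only sketch --- is incorrect as sketched, in two ways that break the second step. First, the dichotomy is not ``$\lambda=\pm1$ versus $\lambda\neq\pm1$''. For $x=s(g)$ one has $x=x^t$ and $\widetilde{\tau}x\widetilde{\tau}=x^{-1}$, so the relevant partner of an irreducible factor $Q$ of the minimal polynomial is $Q^*=\tau(inv(Q))$, and the case division is by whether $Q$ is proportional to $Q^*$ or not. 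Self-paired factors with roots different from $\pm1$ exist (already $Q=\xi-\lambda$ with $\lambda\tau(\lambda)=1$, $\lambda\neq\pm1$, and irreducible $Q$ of higher degree with $Q\sim Q^*$); these produce descendants $(U(W_{D'}),O(W))$ with $(D',\tau')$ a genuine extension of $(D,\tau)$ and $W$ a quadratic space over $D'^{\tau'}$, which your list (type (ii) only ``from the $\pm1$-eigenspaces'', over the original $D/F$) omits.

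Second, and more seriously, your type (i) pair $(GL(W_{D'}),O(W))$ --- general linear group over the quadratic extension $D'$ but orthogonal group of a space over the subfield $F'$ --- is not what arises, and is in fact not a symmetric pair at all: over $\overline{F}$ the ambient group becomes $GL_n\times GL_n$, whose symmetric subgroups are (twisted) diagonals or products of symmetric subgroups of the two factors, never a ``small'' diagonal $O_n$. In a block $\ker Q(x)\oplus\ker Q^*(x)$ with $Q\not\sim Q^*$, the unitarity condition already identifies $U(V_D)_x$ with a diagonal copy of $GL(\ker Q(x))$ over the field $D_Q$, and the further condition of $\widetilde{\tau}$-invariance cuts this down to the orthogonal group of the symmetric $D_Q$-bilinear form obtained by restricting $B_D$ to $\ker Q(x)$. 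So the descendant is $(GL(W),O(W))$ for $W$ a quadratic space over the \emph{larger} field $F'=D_Q\supseteq D$, which is exactly the type covered by Theorem \ref{GL_O} together with Lemma \ref{NegDefAlg}. With your version of type (i) the appeal to Theorem \ref{GL_O} does not apply, so the proposed proof does not close; with the corrected bookkeeping it becomes the paper's argument.
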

\begin{proof}

By Theorem \ref{CompDesU_O} below, all the descendants of the pair
$(U(V_D),O(V))$ are products
of pairs of the types\\
(i) $(GL(W),O(W))$ for some quadratic space $W$ over some field
$F'$ that extends $F$.\\
(ii) $(U(W_{D'}),O(W))$ for some extension $(D',\tau')$ of
$(D,\tau)$ and some quadratic space $W$ over $D'^{\tau'}$.

The pair (i) is weakly linearly tame by Theorem \ref{GL_O} below
and the pair (ii) is weakly linearly tame by subsection
\ref{U_reg} below. Now, the pair $(GL(V),U(V))$ is tame by Theorem
\ref{LinDes}.
\end{proof}

\section{$\Z/2\Z$ graded representations of $sl_2$ and their
defects} \label{GradRepDef}

In this section we will introduce terminology that will help to
verify the condition of Proposition \ref{SpecCrit}.

\subsection{Graded representations of $sl_2$}

\begin{definition}
We fix standard basis $e,h,f$ of $sl_2(F)$. We fix a grading on
$sl_2(F)$ given by $h \in sl_2(F)_0$ and $e,f \in sl_2(F)_1$. A
\textbf{graded representation of $sl_2$} is a representation of
$sl_2$ on a graded vector space $V=V_0 \oplus V_1$ such that
$sl_2(F)_i(V_j) \subset V_{i+j}$ where $i,j \in \Z/2\Z$.
\end{definition}

The following lemma is standard.
\begin{lemma}$ $\\
(i) Every graded representation of $sl_2$ which is
irreducible as a graded representation is irreducible just as a representation.\\
(ii) Every irreducible representation $V$ of $sl_2$ admits exactly
two gradings. In one highest weight vector lies in $V_0$ and in
the other in $V_1$.
\end{lemma}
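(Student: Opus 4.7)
The plan is to use the parity involution $\alp \colon V \to V$ defined by $\alp|_{V_i} = (-1)^i \id$ as the bridge between the plain and graded $sl_2$-structures. A direct computation using that $h \in sl_2(F)_0$ while $e, f \in sl_2(F)_1$ shows that $\alp$ commutes with $h$ and anti-commutes with $e$ and $f$. Consequently, for any $sl_2$-subrepresentation $W \subseteq V$, the subspace $\alp(W)$ is again $sl_2$-invariant, and a subspace is graded if and only if it is $\alp$-stable.

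For part (ii), I would first note that since $h$ is semisimple and preserves each $V_i$, each $V_i$ is a sum of weight spaces of $V$. The irreducible $sl_2$-representation of highest weight $n$ has one-dimensional weight spaces with weight vectors $v_n, v_{n-2}, \ldots, v_{-n}$, so each $v_k$ is homogeneous of some parity $\eps_k \in \Z/2\Z$. The requirement $e V_i \subseteq V_{i+1}$ forces $\eps_{k+2} = \eps_k + 1$ whenever $e v_k \neq 0$, and analogously for $f$. These conditions determine all $\eps_k$ from $\eps_n$, giving the two claimed gradings.

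For part (i), assume $V$ is graded-irreducible and, for contradiction, that there is a proper non-zero $sl_2$-subrepresentation $W \subsetneq V$. Then $W + \alp(W)$ and $W \cap \alp(W)$ are both $\alp$-stable $sl_2$-subrepresentations, hence graded, so by graded-irreducibility they must equal $V$ and $0$ respectively, giving $V = W \oplus \alp(W)$. Replacing $W$ by an irreducible $sl_2$-subrepresentation inside it, pick a highest weight vector $w \in W$ of weight $n$ and set $u := w + \alp(w) \in V_0$. Since $e$ raises weight by $2$ we have $eu = 0$, and using $f\alp = -\alp f$ one computes $f^k u = f^k w + (-1)^k \alp(f^k w)$, so the $sl_2$-submodule generated by $u$ has basis $\{f^k w + (-1)^k \alp(f^k w)\}_{k=0}^{n}$. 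This submodule is visibly graded (the $k$-th basis vector lies in $V_{k \bmod 2}$) and has dimension $n+1$, while $\dim V = 2 \dim W \geq 2(n+1)$; so it is a proper non-zero graded $sl_2$-subrepresentation, contradicting graded-irreducibility.

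The main obstacle, which is really the only non-formal step, is the explicit construction of the graded submodule generated by $u$ in the final argument; the key point there is the sign-twisted intertwining $f \alp = -\alp f$, which makes $u$ behave like a highest weight vector whose $sl_2$-orbit remains homogeneous with alternating parity.
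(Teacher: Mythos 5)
Your proof is correct, and in fact the paper offers no proof at all---it simply declares the lemma ``standard''---so there is nothing to compare against; your argument via the parity involution $\alpha$ (graded subspaces $=$ $\alpha$-stable subspaces, $\alpha$ commuting with $h$ and anti-commuting with $e,f$) is a clean and complete way to supply the missing details. The only step you leave implicit is that $u=w+\alpha(w)\neq 0$; this does hold, since $W\cap\alpha(W)=0$ means $w=-\alpha(w)$ would force $w=0$, but it is worth saying, as $u$ could a priori vanish if $w$ happened to be homogeneous of odd parity.
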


\begin{definition}
We denote by $V_{\lambda}^{w}$ the irreducible graded
representation of $sl_2$ with highest weight $\lambda$ and highest
weight vector of parity $p$ where $w = (-1)^p$.
\end{definition}

The following lemma is straightforward.

\begin{lemma} \label{GradedTensors}
\setcounter{equation}{0}
\begin{align}
& (V_{\lambda}^w)^* = V_{\lambda}^{w(-1)^{\lambda}}\\
& V_{\lambda_1}^{w_1} \otimes V_{\lambda_2}^{w_2} =
\bigoplus_{i=0}^{\min(\lambda_1, \lambda_2)}
V_{\lambda_1+\lambda_2 - 2i}^{w_1w_2(-1)^i}\\
& \Lambda^2(V_{\lambda}^w) = \bigoplus_{i=0}^{\lfloor
\frac{\lambda-1}{2} \rfloor} V_{2\lambda-4i-2}^{-1}.
\end{align}
\end{lemma}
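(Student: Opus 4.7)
The three statements are graded refinements of standard $sl_2$ facts (dualization, Clebsch--Gordan, and the splitting of the tensor square into symmetric and antisymmetric parts). The underlying ungraded isomorphisms may be taken for granted; the only real content is bookkeeping the parity of the highest weight vector of each irreducible summand on the right-hand side. Throughout I would fix a highest weight vector $v_\lambda$ of $V_\lambda^w$ of parity $p$ (so $w=(-1)^p$), and repeatedly exploit the single observation that, because $f\in sl_2(F)_1$, the vector $f^k v_\lambda$ is homogeneous of parity $p+k$.

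For (1), the lowest weight vector of $V_\lambda^w$ is $f^\lambda v_\lambda$, of parity $p+\lambda$. Dually, the highest weight vector of $(V_\lambda^w)^*$ is (up to scalar) the coordinate functional on this lowest weight vector, which inherits parity $p+\lambda$. Hence $(V_\lambda^w)^*=V_\lambda^{(-1)^{p+\lambda}}=V_\lambda^{w(-1)^\lambda}$.

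For (2), I would first invoke the ungraded Clebsch--Gordan decomposition to obtain the multiplicity-free sum of the $V_{\lambda_1+\lambda_2-2i}$. To pin down the grading on the $i$-th summand, I would write down an explicit highest weight vector, namely a suitable linear combination
\[
\sum_{j=0}^{i} c_j \, f^j v_{\lambda_1}\otimes f^{i-j} v_{\lambda_2},
\]
where the coefficients $c_j$ are determined by the requirement that $e$ annihilate the sum (this is the standard construction, and I would not bother to compute the $c_j$ explicitly). Every term in this sum has parity $(p_1+j)+(p_2+i-j)=p_1+p_2+i$, which does not depend on $j$. Hence the vector is homogeneous of that parity, yielding the sign $w_1w_2(-1)^i$.

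For (3), I would use the ungraded fact that $\Lambda^2 V_\lambda$ picks out the summands $V_{2\lambda-2k}$ with $k$ odd, write $k=2i+1$, and run the same highest-weight-vector argument inside $V_\lambda\otimes V_\lambda$, now imposing antisymmetry. The highest weight vector of the $V_{2\lambda-4i-2}$-component takes the form $\sum_j c_j \, f^j v_\lambda\wedge f^{2i+1-j}v_\lambda$, and each wedge term has parity $(p+j)+(p+2i+1-j)=2p+2i+1\equiv 1\pmod 2$. Thus the highest weight vector is odd regardless of $w$, which explains why the right-hand side is uniformly $V_{2\lambda-4i-2}^{-1}$. The only point that deserves a moment of care here is the existence of a nonzero antisymmetric highest weight vector of the claimed form for each $i\le\lfloor(\lambda-1)/2\rfloor$, but this is immediate from the standard dimension count for $\Lambda^2 V_\lambda$. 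No single step is an obstacle; the main thing to get right is the uniform-parity observation that makes each highest weight vector homogeneous, and that is automatic because swapping $j\leftrightarrow k-j$ preserves the sum.
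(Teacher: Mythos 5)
Your proof is correct, and it is essentially the argument the paper has in mind: the paper labels this lemma ``straightforward'' and omits the proof, which is exactly the routine verification you carry out (reduce to the ungraded statements and track the parity of an explicit highest weight vector, using that $f$ flips parity so each term $f^j v_{\lambda_1}\otimes f^{k-j}v_{\lambda_2}$ has parity independent of $j$). Nothing is missing.
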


\subsection{Defects}
\begin{definition}
Let $\pi$ be a graded representation of $sl_2$. We define the
\textbf{defect} of $\pi$ to be 
$$\de(\pi)=\tr(h|_{(\pi^e)_0})-\dim(\pi_1)$$
\end{definition}
The following lemma is straightforward

\begin{lemma} \label{Defects}

\begin{align}
&\de(\pi \oplus \tau)=\de(\pi) + \de(\tau)\\
&\de(V_{\lambda}^w) =\frac{1}{2}(\lambda w + w( \frac{1
+(-1)^{\lambda}}{2}) -1)= \frac{1}{2} \left\{%
\begin{array}{ll}
   \lambda w + w -1 & \lambda \text{ is even} \\
    \lambda w -1 & \lambda \text{ is odd}
\end{array}%
\right.
\end{align}

\end{lemma}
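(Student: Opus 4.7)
The plan is to prove both assertions directly from the definition $\de(\pi) = \tr(h|_{(\pi^e)_0}) - \dim(\pi_1)$. The first identity, additivity under direct sums, is immediate: for a graded $sl_2$-representation $\pi \oplus \tau$ one has $(\pi \oplus \tau)^e = \pi^e \oplus \tau^e$, the grading decomposes componentwise, and $h$ acts block-diagonally, so both $\tr(h|_{(\cdot)^e_0})$ and $\dim((\cdot)_1)$ are additive.

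For the explicit formula on $V_\lambda^w$, I would use the standard fact that in an irreducible representation of $sl_2$ of highest weight $\lambda$ the kernel of $e$ is one-dimensional, spanned by a highest weight vector $v$ of $h$-weight $\lambda$. By the definition of $V_\lambda^w$, the vector $v$ has parity $p$ with $w = (-1)^p$. Therefore
$$\tr(h|_{(V_\lambda^w)^e_0}) = \begin{cases} \lambda & \text{if } w = 1, \\ 0 & \text{if } w = -1. \end{cases}$$
Since $f$ is of odd degree in the grading of $sl_2$, the weight basis $\{f^k v\}_{0 \leq k \leq \lambda}$ has $f^k v$ in degree $p+k \pmod 2$. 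Hence $\dim(V_\lambda^w)_1$ is the number of $k \in \{0,1,\dots,\lambda\}$ with $k+p$ odd.

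The remaining step is a routine four-case tabulation on the parities of $\lambda$ and of $p$. One obtains $\dim(V_\lambda^w)_1 = \lambda/2$ when $\lambda$ is even and $w=1$; $\dim(V_\lambda^w)_1 = \lambda/2 + 1$ when $\lambda$ is even and $w=-1$; and $\dim(V_\lambda^w)_1 = (\lambda+1)/2$ in both subcases of $\lambda$ odd. Subtracting from $\tr(h|_{(V_\lambda^w)^e_0})$ gives $\lambda/2$, $-\lambda/2-1$, $(\lambda-1)/2$, $-(\lambda+1)/2$ respectively, which match the piecewise expression $\frac{1}{2}(\lambda w + w -1)$ for $\lambda$ even and $\frac{1}{2}(\lambda w -1)$ for $\lambda$ odd.

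There is no real obstacle: the argument is entirely formal once one records the parity of the highest weight vector and the fact that $f$ shifts the $\Z/2\Z$-grading by one. The only bookkeeping worth stressing is that $(V_\lambda^w)^e_0$ vanishes precisely when $w = -1$, which produces the apparent asymmetry between the two cases in the closed formula.
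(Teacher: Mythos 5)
Your proof is correct and is exactly the direct computation the paper has in mind: the lemma is stated without proof (the authors call it straightforward), and your verification — additivity from block-diagonality, plus the four-case parity count of $\dim(V_\lambda^w)_1$ against $\tr(h|_{(V_\lambda^w)^e_0})\in\{\lambda,0\}$ — matches the closed formula in all cases.
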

\begin{definition}
Let $\g$ be a $(\Z/2\Z)$ graded Lie algebra. We say that $\g$
\textbf{is of negative defect} if for any graded homomorphism
$\pi: sl_2 \to \g$, the defect of $\g$ with respect to the adjoint
action of $sl_2$ is negative. 

We say that  $\g$
\textbf{is of negative normalized defect} if the semi-simple part of $\g$ (i.e. the quotient of $\g$ by its center) 
is of negative defect.
\end{definition}
\begin{remark}
Clearly,  $\g$
is of negative normalized defect if and only if for any graded homomorphism
$\pi: sl_2 \to \g$, the defect of $\g$ with respect to the adjoint
action of $sl_2$ is less than the dimension of the odd part of the center of  $\g$.
\end{remark}
\begin{definition}
We say that a symmetric pair $(G,H,\theta)$ \textbf{is of negative 
normalized defect} if the Lie algebra $\g$ with the grading defined by
$\theta$ is of negative 
normalized defect.
\end{definition}

\begin{lemma}
Let $(G,H,\theta)$  be a symmetric pair. Assume that $\g$ is semi-simple. Then
$Q(\gd)=\gd.$
\end{lemma}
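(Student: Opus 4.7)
The claim is equivalent to $\gd^H = 0$, since $H$ is reductive and $Q(\gd) = \gd/\gd^H$ embeds canonically back into $\gd$ as a complement to $\gd^H$. So given $x \in \gd^H$, the plan is to show $x = 0$. The first move is to apply Jordan decomposition: write $x = x_s + x_n$ with $x_s$ semisimple and $x_n$ nilpotent. Because Jordan decomposition is preserved by $\theta$ (so $\theta(x_s) = -x_s$ and $\theta(x_n) = -x_n$, putting both in $\gd$) and by the adjoint $H$-action (so both lie in $\gd^H$), it suffices to treat the two parts separately.

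The nilpotent case follows immediately from the preceding $sl_2$-triple lemma: extend $x_n$ to a triple $(x_n, d(x_n), x_n^-)$ with $d(x_n) \in \h$. The $sl_2$-relation gives $[d(x_n), x_n] = 2 x_n$, while the hypothesis $x_n \in \gd^H$ together with $d(x_n) \in \h$ forces $[d(x_n), x_n] = 0$. Hence $x_n = 0$.

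The semisimple case is the main point, and the key observation is that $\ad x_s$ is an \emph{odd} endomorphism with respect to the $\theta$-grading, since $[\gd, \h] \subset \gd$ and $[\gd, \gd] \subset \h$. Thus $\ad x_s$ sends $\h \to \gd$ and $\gd \to \h$, and the hypothesis $x_s \in \gd^H$ says precisely that $\ad x_s|_\h = 0$. Composing, $(\ad x_s)^2(\gd) \subset \ad x_s(\h) = 0$ and $(\ad x_s)^2(\h) = 0$, so $(\ad x_s)^2 = 0$. Since $x_s$ is semisimple, $\ad x_s$ is a semisimple endomorphism of $\g$; but a semisimple operator whose square is zero is zero, so $\ad x_s = 0$. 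This places $x_s$ in $Z(\g)$, which is where I finally use that $\g$ is semi-simple: $Z(\g) = 0$, so $x_s = 0$. Combined with $x_n = 0$, we conclude $x = 0$, so $\gd^H = 0$ and $Q(\gd) = \gd$. The main (only) subtle step is the semisimple case; the trick is the observation $(\ad x_s)^2 = 0$, which avoids any appeal to structure theory of involutions on semisimple Lie algebras.
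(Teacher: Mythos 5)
Your proof is correct, but it runs along a genuinely different track from the paper's. The paper gives a one-step, uniform argument with no case split: if $0\neq x\in(\gd)^H$ then the orbit $Hx$ is a point, so the conormal space $CN^{\gd}_{Hx,x}$ is all of $\gd$; identifying that conormal space with $[\h,x]^\perp=(\gd)^x$ via the (non-degenerate) Killing form yields $[x,\gd]=0$, which together with $[x,\h]=0$ puts $x$ in $Z(\g)=0$. You instead reduce to $(\gd)^H=0$ (same first step), then split $x=x_s+x_n$ by Jordan decomposition and kill the two parts by different mechanisms: the $sl_2$-triple lemma for $x_n$ (the relation $[d(x_n),x_n]=2x_n$ against $[\h,x_n]=0$), and the observation that $\ad x_s$ is odd for the grading and vanishes on $\h$, hence squares to zero, hence (being semisimple) is zero. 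Both arguments ultimately show $x$ is central; yours trades the Killing-form/conormal computation for Jordan decomposition plus $sl_2$-theory, which is more machinery but arguably more hands-on. Note also that your own key observation lets you skip the Jordan decomposition entirely: for \emph{any} $x\in(\gd)^H$ the same computation gives $(\ad x)^2(\g)\subset[x,\h]=0$, so $x$ is ad-nilpotent, hence nilpotent in the semisimple $\g$, and your $sl_2$-triple argument alone finishes the proof. The only cosmetic caveat is that the $sl_2$-triple lemma should be invoked under the assumption $x_n\neq 0$ (one cannot extend $0$ to a genuine triple), which is how your contradiction is implicitly structured anyway.
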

\begin{proof}$ $\\
Assume the contrary: there exist $0 \neq x \in \gd$ such that
$Hx=x$. Then $\dim (CN_{Hx,x}^{\gd}) = \dim \gd$, hence
$CN_{Hx,x}^{\gd} = \gd$. On the other hand, $CN_{Hx,x}^{\gd} \cong [\h, x]^\bot=(\gd)^x$ (here $(\cdot)^\bot$ means the ortogonal compliment w.r.t. the kiling form). Therefore $\gd=(\gd)^x$ and hence $x$
lies in the center of $\g$, which is impossible.
\end{proof}

Proposition \ref{SpecCrit} can be rewritten now in the following
form
\begin{theorem}
A symmetric pair of negative 
normalized defect is weakly linearly tame.
\end{theorem}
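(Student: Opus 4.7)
The plan is to show that the hypothesis of this theorem is equivalent to the hypothesis of Proposition \ref{SpecCrit}; the conclusion then follows from that proposition. The equivalence is essentially a bookkeeping of dimensions and defects, once one generalizes the preceding lemma from the semisimple case to the reductive case.

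Fix a nilpotent $x \in \gd$ and extend it (using the lemma furnishing $d(x)$) to an $sl_2$-triple $(x,d(x),x_{-})$. This yields a graded homomorphism $\pi : sl_2 \to \g$ under which $\g_0 = \h$, $\g_1 = \gd$ and $(\g^e)_0 = \h_x$. Reading the definition of defect directly gives
$$\de(\g) = \tr(ad(d(x))|_{\h_x}) - \dim \gd.$$
Since $\g$ is reductive, the decomposition $\g = Z(\g) \oplus [\g,\g]$ is simultaneously $\theta$-stable and $sl_2$-stable, and $sl_2$ acts trivially on $Z(\g)$. By Lemma \ref{Defects}, each trivial graded summand $V_0^+$ contributes $0$ to the defect and each $V_0^-$ contributes $-1$, so $\de(Z(\g)) = -\dim Z(\g)_1$ and therefore
$$\de(\g/Z(\g)) = \de(\g) + \dim Z(\g)_1.$$

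The third ingredient is the identification $(\gd)^H = Z(\g)_1$. The inclusion $\supseteq$ is immediate. Conversely, any $y \in (\gd)^H$ decomposes uniquely along the $\theta$-stable splitting $\gd = Z(\g)_1 \oplus [\g,\g]_1$, and its component in $[\g,\g]_1$ is still $H$-invariant, hence invariant under the symmetric pair induced on the semisimple subalgebra $[\g,\g]$; the preceding lemma applied to that pair forces this component to vanish. Consequently $\dim Q(\gd) = \dim \gd - \dim Z(\g)_1$.

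Combining the three identities, the Proposition \ref{SpecCrit} inequality at $x$ is equivalent to $\de(\g/Z(\g)) < 0$ for the graded $sl_2$-representation $\pi$ attached to $x$. Every nilpotent $x \in \gd$ gives rise to such a $\pi$, and every graded homomorphism $sl_2 \to \g$ sends $e$ to a nilpotent element of $\gd$; hence the pair has negative normalized defect if and only if it satisfies the hypothesis of Proposition \ref{SpecCrit}, and the theorem follows at once. The only nonmechanical step is the identification $(\gd)^H = Z(\g)_1$, which I expect to be the main (small) obstacle: one has to confirm that $H$-invariance controls invariance under the symmetric pair induced on $[\g,\g]$, so that the preceding semisimple lemma can legitimately be invoked on that factor.
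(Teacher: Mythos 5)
Your argument is correct and is exactly the route the paper intends: the theorem is presented as a "rewriting" of Proposition \ref{SpecCrit}, with the translation $\tr(ad(d(x))|_{\h_x})-\dim\gd=\de(\g)$, the splitting off of the center contributing $\de(Z(\g))=-\dim Z(\g)_1$, and the preceding semisimple lemma applied to $[\g,\g]$ to identify $(\gd)^H$ with $Z(\g)_1$ so that $\dim Q(\gd)=\dim\gd-\dim Z(\g)_1$. You have in fact supplied more detail than the paper does (which leaves the whole translation implicit), and the one step you flag as delicate — passing $H$-invariance to the induced pair on $[\g,\g]$ — goes through since $[G,G]^{\theta}\subset H$.
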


Evidently, a product of pairs of negative normalized defect is again of
negative normalized defect.

The following lemma is straightforward.
\begin{lemma} \label{NegDefAlg}
Let $(G,H,\theta)$ be a symmetric pair. Let $F'$ be any field
extending $F$. Let $(G_{F'},H_{F'},\theta)$ be the extension of
$(G,H,\theta)$ to $F'$. Suppose that it is of negative normalized defect (as
a pair over $F'$) . Then $(G,H,\theta)$ and
$(G_{F'/F},H_{F'/F},\theta)$ are of negative normalized defect (as pairs over
$F$).
\end{lemma}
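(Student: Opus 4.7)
The plan is to track how the defect and the odd part of the center transform under scalar extension and under Weil restriction of scalars. Two general facts will drive the argument: for a finite-dimensional Lie algebra $\g$ over $F$ and any extension $F'/F$, one has $(\g^e)_{F'} = (\g_{F'})^e$ (by flatness of scalar extension) and $\z(\g)_{F'} = \z(\g_{F'})$ (since centralizers are cut out by commutation with an $F$-spanning set, and intersections of $F$-rational subspaces commute with scalar extension); and the eigenvalues of $h$ in any finite-dimensional $sl_2$-representation are integers, so the relevant $h$-traces are integers independent of the base field.

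For the assertion about $(G,H,\theta)$, I would start with a graded homomorphism $\pi\colon sl_2(F) \to \g$ and extend scalars to $\pi_{F'}\colon sl_2(F') \to \g_{F'}$. The compatibilities above give $\de(\pi) = \de(\pi_{F'})$ and $\dim_F \z(\g)_1 = \dim_{F'} \z(\g_{F'})_1$, while the hypothesis on $(G_{F'},H_{F'},\theta)$ supplies $\de(\pi_{F'}) < \dim_{F'} \z(\g_{F'})_1$; combining these yields the required strict inequality for $\pi$.

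For the assertion about $(G_{F'/F}, H_{F'/F}, \theta)$, note that its Lie algebra over $F$ is $\g_{F'}$ viewed as an $F$-vector space, retaining its $F'$-bilinear bracket. Given a graded $F$-linear Lie algebra homomorphism $\pi\colon sl_2(F) \to \g_{F'}$, I would extend it $F'$-linearly to $\tilde\pi\colon sl_2(F') \to \g_{F'}$ by $\tilde\pi(x \otimes a) := a\pi(x)$; $F'$-bilinearity of the bracket makes this a graded $F'$-linear Lie homomorphism, to which the hypothesis applies. Both $(\g_{F'})^e$ and $\z(\g_{F'})$ are $F'$-subspaces whose $F$-dimensions equal $[F':F]$ times their $F'$-dimensions, and $h$ acts $F'$-linearly with integer eigenvalues, so $\tr_F(h|_V) = [F':F]\,\tr_{F'}(h|_V)$ on any such $V$. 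Hence $\de(\pi) = [F':F]\,\de(\tilde\pi)$ and $\dim_F \z(\g_{F'})_1 = [F':F]\,\dim_{F'} \z(\g_{F'})_1$, and dividing by $[F':F]$ reduces the required inequality to the hypothesis applied to $\tilde\pi$.

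The only delicate step is the identity $\tr_F(T) = [F':F]\,\tr_{F'}(T)$ for an $F'$-linear operator $T$ whose eigenvalues lie in the prime field, which rests on $\tr_{F'/F}(a) = [F':F]\,a$ for $a \in F$. This uniform scaling by $[F':F]$ of both the defect and the odd center dimension is precisely what is needed for the strict inequality to persist when descending from $F'$ back to $F$.
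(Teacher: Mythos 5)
Your proposal is correct, and it supplies exactly the verification the paper omits: the lemma is stated there with no proof beyond the remark that it is straightforward. The two mechanisms you isolate — that scalar extension preserves the defect, the odd part, and the (odd) center (so the inequality for $\pi_{F'}$ implies it for $\pi$), while restriction of scalars multiplies both the defect and $\dim \z(\g)_1$ by $[F':F]$ (using $\tr_F(T)=[F':F]\,\tr_{F'}(T)$ for an $F'$-linear $T$ with rational eigenvalues, and the $F'$-linear extension $\tilde\pi$ of a graded homomorphism $\pi\colon sl_2(F)\to\g_{F'}$) — are precisely the intended argument, and the details you check (flatness for $(\g^e)_{F'}=(\g_{F'})^e$, $F'$-bilinearity of the bracket, finiteness of $F'/F$ implicit in forming $G_{F'/F}$) are the right ones.
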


In \cite{AG2} we proved the following (easy) proposition (see
\cite{AG2}, Lemma 7.6.6).
\begin{proposition} \label{pi_pibarAG2}
Let $\pi$ be a representation of $sl_2$. Then
$\tr(h|_{(\pi^e)})<\dim(\pi)$.
\end{proposition}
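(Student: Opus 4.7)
The plan is to reduce to the case of an irreducible representation. Both sides of the claimed inequality are additive in direct sums: if $\pi = \pi_1 \oplus \pi_2$, then $\pi^e = \pi_1^e \oplus \pi_2^e$ with $h$ preserving the decomposition, so $\tr(h|_{\pi^e}) = \tr(h|_{\pi_1^e}) + \tr(h|_{\pi_2^e})$, while $\dim \pi = \dim \pi_1 + \dim \pi_2$. By Weyl's complete reducibility theorem for finite-dimensional representations of $sl_2$, it therefore suffices to check the strict inequality on each irreducible summand separately and then sum.

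For an irreducible $sl_2$-representation $V_\lambda$ of highest weight $\lambda \in \Z_{\geq 0}$, the standard structure theory of finite-dimensional $sl_2$-modules gives that $V_\lambda^e$ is one-dimensional, spanned by a highest weight vector on which $h$ acts by the scalar $\lambda$. Thus $\tr(h|_{V_\lambda^e}) = \lambda < \lambda+1 = \dim V_\lambda$. Summing over the irreducible constituents of any nonzero $\pi$ gives the desired strict inequality. There is essentially no obstacle here: the strictness on each irreducible summand is exactly what guarantees strictness of the sum, and everything else is a direct appeal to the classification of finite-dimensional $sl_2$-modules.
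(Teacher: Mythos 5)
Your proof is correct: decomposing into irreducibles and observing that $V_\lambda^e$ is the one-dimensional highest-weight line, on which $h$ acts by $\lambda < \lambda+1 = \dim V_\lambda$, is exactly the standard argument. The paper does not reprove this statement itself --- it only cites Lemma 7.6.6 of [AG2] --- so there is nothing in-paper to compare against; the one trivial caveat is that strictness requires $\pi \neq 0$, which you already note.
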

We would like to reformulate it in terms of defect. For this we
will need the following notation.
\begin{notation} $ $\\
(i) Let $\pi$ be a representation of $sl_2$. We denote by $\opi$
the representation of $sl_2$ on the same space defined by
$\opi(e):=-\pi(e)$, $\opi(f):=-\pi(f)$ and $\opi(h):=\pi(h)$. \\
(ii) We define grading on $\pi \oplus \opi$ by the involution $s(v
\oplus w):=  w \oplus v$.
\end{notation}
Proposition \ref{pi_pibarAG2} can be reformulated in the following
way.
\begin{proposition} \label{pi_pibar}
Let $\pi$ be a representation of $sl_2$. Then $\de(\pi \oplus
\opi)<0$.
\end{proposition}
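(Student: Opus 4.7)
The plan is to show that the defect of $\pi \oplus \opi$ unpacks, after a short computation with the grading, to exactly the quantity appearing in Proposition \ref{pi_pibarAG2}. Once this identification is made, the inequality is nothing more than the old statement.

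First I would describe the grading explicitly. Since the involution $s$ on $\pi \oplus \opi$ is the swap $(v,w) \mapsto (w,v)$, its $+1$-eigenspace is the diagonal $\{(v,v) : v \in \pi\}$ and its $-1$-eigenspace is the antidiagonal $\{(v,-v) : v \in \pi\}$. Both have dimension $\dim \pi$. A quick verification shows this is indeed a compatible grading for the $sl_2$-action: $s$ anticommutes with the action of $e$ and $f$ (because $\opi(e)=-\pi(e)$ and $\opi(f)=-\pi(f)$) and commutes with the action of $h$ (since $\opi(h)=\pi(h)$).

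Next I would compute the two ingredients of $\de(\pi \oplus \opi)$. The $e$-invariants are $(\pi\oplus\opi)^e = \pi^e \oplus \pi^e$, and the degree-$0$ part under the swap is the diagonal, canonically isomorphic to $\pi^e$ via $v \mapsto (v,v)$. Since $h$ acts diagonally, $\tr(h|_{((\pi\oplus\opi)^e)_0}) = \tr(h|_{\pi^e})$. On the other hand $\dim((\pi\oplus\opi)_1) = \dim \pi$. Thus
\[
\de(\pi \oplus \opi) = \tr(h|_{\pi^e}) - \dim(\pi).
\]

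Finally, Proposition \ref{pi_pibarAG2} states precisely that $\tr(h|_{\pi^e}) < \dim(\pi)$, which gives $\de(\pi \oplus \opi) < 0$ and completes the proof. There is no real obstacle here: the only thing to be careful about is keeping track of the sign conventions in the definition of $\opi$ so that the swap $s$ actually gives a valid $\Z/2\Z$-grading on the $sl_2$-module, which is what makes the defect well-defined in the first place.
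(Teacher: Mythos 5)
Your proposal is correct and matches the paper's own treatment: the paper simply declares Proposition \ref{pi_pibar} to be a reformulation of Proposition \ref{pi_pibarAG2}, and your computation (identifying the degree-$0$ part of $(\pi\oplus\opi)^e$ with $\pi^e$ via the diagonal and the odd part with the antidiagonal of dimension $\dim\pi$) is exactly the verification of that reformulation. No discrepancy to report.
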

In \cite{AG2} we also deduced from this proposition the following
theorem (see \cite{AG2}, 7.6.2).
\begin{theorem} \label{2RegPairs}
For any reductive group $G$, the pairs $(G \times G,\Delta G)$ and
$(G_{E/F},G)$ are of negative normalized defect and hence weakly linearly
tame. Here $\Delta G$ is the diagonal in $G \times G$.
\end{theorem}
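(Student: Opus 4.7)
The plan is to verify negative normalized defect for each of the two pairs; weakly linear tameness then follows from the theorem in this subsection stating that a symmetric pair of negative normalized defect is weakly linearly tame. The main input will be Proposition \ref{pi_pibar}, which gives $\de(\pi \oplus \opi) < 0$ for every $sl_2$-representation $\pi$.

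For $(G \times G, \Delta G)$ with the swap involution $\theta$, the grading on $\g \oplus \g$ has $(\g \oplus \g)_0 = \Delta \g$ and $(\g \oplus \g)_1 = \{(x,-x) : x \in \g\}$. Any graded homomorphism $\phi \colon sl_2 \to \g \oplus \g$ therefore has the form $\phi(h) = (H,H)$, $\phi(e) = (E,-E)$, $\phi(f) = (F,-F)$, and the $sl_2$-relations force $(H,E,F)$ to be an $sl_2$-triple in $\g$. Writing $\pi$ for the adjoint representation of $sl_2$ on $\g$ via this triple, a direct computation of $\ad(\phi(a))$ on $\g \oplus \g$ identifies the resulting graded $sl_2$-representation with $\pi \oplus \opi$ in the sense defined just before Proposition \ref{pi_pibar} (the swap grading on $\g \oplus \g$ coincides with the involution $s(v \oplus w) = w \oplus v$). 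Proposition \ref{pi_pibar} then yields negative defect. Running the identical argument on $\g^{ss} \oplus \g^{ss}$, which is the semisimple part of $\g \oplus \g$, gives negative defect there as well, i.e., negative normalized defect for $(G \times G, \Delta G)$.

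For $(G_{E/F}, G)$ the strategy is to reduce to the previous case by base change to $E$. Standard properties of Weil restriction together with the decomposition $E \otimes_F E \cong E \times E$ identify $(G_{E/F})_E$ with $G_E \times G_E$; the subgroup $G$, being the $\theta$-fixed subgroup, base changes to $\Delta G_E$, while $\theta$ itself becomes the swap after base change. Thus the pair extended to $E$ is $(G_E \times G_E, \Delta G_E)$, which is of negative normalized defect over $E$ by the first step. Lemma \ref{NegDefAlg} then transfers this to negative normalized defect for $(G_{E/F}, G)$ as a pair over $F$. The only step requiring any real calculation is the identification of the $sl_2$-representation as $\pi \oplus \opi$ in the first case; everything else is bookkeeping with $sl_2$-relations and base change of algebraic groups.
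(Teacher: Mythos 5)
Your argument is correct and is essentially the deduction the paper itself indicates: it only cites \cite{AG2}, Theorem 7.6.2, noting that the result is deduced from Proposition \ref{pi_pibar}, and your identification of the graded adjoint representation for $(G\times G,\Delta G)$ with $\pi\oplus\opi$ followed by base change to $E$ and Lemma \ref{NegDefAlg} for $(G_{E/F},G)$ is exactly that deduction carried out in detail.
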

In 
\cite[\S\S 2.7]{AG2} we 
proved the following theorem.
\begin{theorem} \label{RJR}
The pair $(GL(V \oplus V),GL(V) \times GL(V))$ is of negative
normalized defect and hence regular.
\end{theorem}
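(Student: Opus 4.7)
My plan is to translate the claim into a defect computation for the graded $sl_2$-module $\g = \mathfrak{gl}(W) = W\otimes W^{*}$ with $W = V\oplus V$, and then apply Proposition \ref{pi_pibar} to $\g$ itself.

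\emph{Reduction.} The center $Z(\g) = F\cdot I_{W}$ lies entirely in $\h$, so $\dim Z(\g)_{1} = 0$, and by the Remark following the definition of negative normalized defect it suffices to prove $\de(\g) < 0$ for every graded homomorphism $\pi\colon sl_2\to \g$. The trivial case $\pi=0$ gives $\de(\g) = -\dim\gd = -2(\dim V)^{2} < 0$. For a non-trivial $\pi$, giving such a homomorphism is the same data as equipping $W = V\oplus V$ with a graded $sl_2$-module structure whose grading involution equals $\eps = \diag(I,-I)$; equivalently, $W$ is a graded $sl_2$-module with $\dim W_{0} = \dim W_{1}$, and then $\g \cong W\otimes W^{*}$ as graded $sl_2$-modules.

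\emph{Doubling.} View $\g$ as an ungraded $sl_2$-module and apply Proposition \ref{pi_pibar}. Using the construction just before that proposition, the swap-graded module $\g\oplus\opi$ (with $\pi$ now standing for the ungraded $sl_2$-rep on $\g$) coincides with $\g\oplus\g[1]$, where $\g[1]$ denotes $\g$ with its $\Z/2$-grading reversed. Hence $\de(\g) + \de(\g[1]) < 0$.

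\emph{Symmetry.} To extract $\de(\g) < 0$ from the inequality above, it would suffice to establish $\de(\g) \le \de(\g[1])$. Directly from the definition of defect,
\[
\de(\g) - \de(\g[1]) = \tr(h|_{(\g^{e})_{0}}) - \tr(h|_{(\g^{e})_{1}}) + \dim\g_{0} - \dim\g_{1}.
\]
The assumption $\dim W_{0} = \dim W_{1}$ yields $\dim\g_{0} = \dim\g_{1} = 2(\dim V)^{2}$; skew-symmetry of $\ad(\pi(e))$ with respect to the non-degenerate trace form on $\g$, together with the fact that $\ad(\pi(e))$ shifts the grading by $1$, forces $\dim(\g^{e})_{0} = \dim(\g^{e})_{1}$. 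Expanding $W\otimes W^{*}$ via Lemma \ref{GradedTensors} and imposing the superdimension constraint $\sum_{\lambda_{i}\text{ even}} w_{i} = 0$ on the graded decomposition $W = \bigoplus_{i} V^{w_{i}}_{\lambda_{i}}$ should then yield the needed $\tr(h|_{(\g^{e})_{0}}) \le \tr(h|_{(\g^{e})_{1}})$.

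The main obstacle is this final trace comparison: while the global dimension counts on $\g$ and $\g^{e}$ are symmetric in the grading, controlling the distribution of highest weights between $(\g^{e})_{0}$ and $(\g^{e})_{1}$ in terms of the signed multiplicities $d_{\lambda} := m^{+}_{\lambda} - m^{-}_{\lambda}$ of the $V^{\pm}_{\lambda}$-isotypic components of $W\otimes W^{*}$ is delicate and really needs the constraint $\operatorname{sdim}(W) = 0$. An alternative route --- and the one actually followed in \cite[\S 2.7]{AG2} --- is to exploit the specific ``doubled'' structure $W = V\oplus V$ through an auxiliary odd involution coming from identifying the two summands, which collapses the argument into a single application of Proposition \ref{pi_pibar}.
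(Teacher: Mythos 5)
A preliminary remark: this paper contains no proof of Theorem \ref{RJR} to compare against --- the theorem is quoted from \cite[\S\S 2.7]{AG2} --- so your proposal has to be judged on its own. Its framework is correct as far as it goes: a nontrivial graded homomorphism $sl_2\to\g=\mathfrak{gl}(V\oplus V)$ is indeed the same datum as a graded $sl_2$-module structure on $W=V\oplus V$ with $\dim W_0=\dim W_1$, with $\g\cong W\otimes W^*$ as graded modules; the center of $\g$ is even, so negative normalized defect reduces to $\de(\g)<0$; and Proposition \ref{pi_pibar} applied to the underlying ungraded module does yield $\de(\g)+\de(\g[1])<0$, since each $V_\lambda\oplus\overline{V_\lambda}$ with the swap grading is $V_\lambda^{+1}\oplus V_\lambda^{-1}$, so the swap-graded $\g\oplus\overline{\g}$ is graded-isomorphic to $\g\oplus\g[1]$.

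The proof is nevertheless incomplete, and the missing step is exactly where the content of the theorem lies. You still need $\de(\g)\le\de(\g[1])$, i.e. $\tr(h|_{(\g^e)_0})\le\tr(h|_{(\g^e)_1})$, i.e. $\sum_\lambda(m^+_\lambda-m^-_\lambda)\lambda\le0$ where $m^{\pm}_\lambda$ is the multiplicity of $V_\lambda^{\pm1}$ in $W\otimes W^*$; you assert this ``should'' follow from $\dim W_0=\dim W_1$ and then yourself describe it as ``the main obstacle'', deferring to an unspecified alternative. The dimension counts you do verify ($\dim\g_0=\dim\g_1$ and $\dim(\g^e)_0=\dim(\g^e)_1$) give no control over how the highest weights are distributed between the two parities, so nothing you have written implies the inequality. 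For the record, your route can be completed: writing $W=\bigoplus_i V_{a_i}^{u_i}$ and expanding $W\otimes W^*$ by Lemma \ref{GradedTensors}, a short computation gives
$$\de(\g)-\de(\g[1])=\tfrac12\sum_{a_i,a_j\ \mathrm{even}}u_iu_j\,|a_i-a_j|\;-\sum_{a_i,a_j\ \mathrm{odd}}u_iu_j\min(a_i{+}1,a_j{+}1),$$
and both terms are $\le0$: the kernel $\min(b,b')=\int_0^\infty\ch_{[0,b]}\ch_{[0,b']}$ is positive semidefinite, which handles the odd sum directly and, via $|a-a'|=a+a'-2\min(a,a')$ together with the constraint $\sum_{a_i\ \mathrm{even}}u_i=\dim W_0-\dim W_1=0$, handles the even sum as well. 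None of this appears in your write-up, so as submitted the argument stops short of proving the theorem.
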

Note that in the case $dimV \neq dimW$ the pair $(GL(V \oplus
W),GL(V) \times GL(W))$ is obviously regular by Proposition
\ref{TrivReg}.

\section{Proof of regularity and tameness} \label{SecReg}

\subsection{The pair $(GL(V),O(V))$}  $ $\\\\
In this subsection we prove that the pair $(GL(V),O(V))$ is weakly
linearly tame.  
For $\dim V \leq 1$  it is obvious.
Hence it is enough to prove the following
theorem.

\begin{theorem} \label{GL_O}
Let $V$ be a quadratic space 
of dimension at least $2$. Then the pair $(GL(V),O(V))$ has
negative normalized defect.
\end{theorem}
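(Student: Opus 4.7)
The plan is to verify $\de(\mathfrak{sl}(V)) < 0$ for every graded homomorphism $\pi: sl_2 \to \mathfrak{gl}(V)$. Such a $\pi$ makes $V$ into an $sl_2$-module on which $\pi(h)$ is skew and $\pi(e), \pi(f)$ are self-adjoint with respect to the quadratic form $B$. An analysis of these compatibilities yields the structural decomposition
\[
V \;=\; \bigoplus_{\lambda \geq 0} V_\lambda \otimes M_\lambda,
\]
where each $V_\lambda$ is the $(\lambda+1)$-dimensional irreducible $sl_2$-module and $M_\lambda$ is a quadratic space when $\lambda$ is even and a symplectic space when $\lambda$ is odd; the form $B$ is reconstructed from the canonical $sl_2$-invariant pairing on each $V_\lambda$ (which is $(-1)^\lambda$-symmetric) tensored with the form on $M_\lambda$, up to a sign twist in the $e$-direction.

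Correspondingly $\mathfrak{gl}(V)$ decomposes as a direct sum of graded $sl_2$-submodules,
\[
\mathfrak{gl}(V) \;=\; \bigoplus_\lambda \mathfrak{gl}(V_\lambda \otimes M_\lambda) \;\oplus\; \bigoplus_{\lambda < \mu} W_{\lambda\mu},
\]
where $W_{\lambda\mu} := \Hom(V_\lambda \otimes M_\lambda, V_\mu \otimes M_\mu) \oplus \Hom(V_\mu \otimes M_\mu, V_\lambda \otimes M_\lambda)$ is graded by the transpose involution swapping the two summands. Since $\de$ is additive over such direct sums, it suffices to bound each piece separately.

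For each off-diagonal block $W_{\lambda\mu}$ with $M_\lambda$ and $M_\mu$ both non-zero, the transpose commutes with $\operatorname{ad}(h)$ but anticommutes with $\operatorname{ad}(e)$ and $\operatorname{ad}(f)$; setting $\pi' := \Hom(V_\lambda \otimes M_\lambda, V_\mu \otimes M_\mu)$ as an $sl_2$-module, this produces an isomorphism $W_{\lambda\mu} \cong \pi' \oplus \opi'$ of graded $sl_2$-modules (with swap grading) in the sense of Proposition \ref{pi_pibar}, so $\de(W_{\lambda\mu}) < 0$. For each diagonal block I would compute $\de(\mathfrak{gl}(V_\lambda \otimes M_\lambda))$ directly using the factorization $\mathfrak{gl}(V_\lambda) \otimes \mathfrak{gl}(M_\lambda) = \bigoplus_{j=0}^\lambda V_{2j} \otimes \mathfrak{gl}(M_\lambda)$ (with $sl_2$ acting only on the first factor), together with the parity rule that $V_{2j} \subset \mathfrak{gl}(V_\lambda)$ sits inside $\mathfrak{h}$ exactly when $j$ is odd. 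A short closed-form calculation then gives $\de(\mathfrak{gl}(V_\lambda \otimes M_\lambda)) \leq -1$, with the strict bound $\leq -3$ whenever $\dim(V_\lambda \otimes M_\lambda) \geq 2$.

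Combining the estimates: under the hypothesis $\dim V \geq 2$, either some single diagonal block already contributes $\leq -3$, or there are at least two non-zero isotypes providing at least two diagonal terms (each $\leq -1$) together with at least one cross term ($\leq -1$); in every case $\de(\mathfrak{gl}(V)) \leq -2$. Since the center $F \cdot I \subset \mathfrak{gl}(V)$ is one-dimensional, symmetric (hence in $\gd$), and $sl_2$-trivial, it contributes defect $-1$, so $\de(\mathfrak{sl}(V)) = \de(\mathfrak{gl}(V)) + 1 \leq -1 < 0$, as required. The main technical care is the sign bookkeeping on tensor products: transpose eigenvalues are multiplicative while the induced $\Z/2\Z$-parities are additive, so the parity assigned to each $V_{2j}$-summand inside $\mathfrak{gl}(V_\lambda \otimes M_\lambda)$ involves a shift that must be applied consistently throughout the diagonal-block computation.
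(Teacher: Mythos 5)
Your overall architecture---decompose $\mathfrak{gl}(V)$ into graded $sl_2$-submodules, use additivity of the defect, kill the off-diagonal blocks with Proposition \ref{pi_pibar}, compute the diagonal blocks explicitly, and account for the one-dimensional odd piece of the center at the end---is essentially the paper's. But your structural input is wrong at a point that matters. Since $\pi(h)$ is $B$-skew while $\pi(e),\pi(f)$ are $B$-symmetric, $B$ is not an invariant form for $\pi$; it is an invariant pairing between $\pi$ and $\opi$. Pushing this through the isotypic decomposition $V=\bigoplus V_\lambda\otimes M_\lambda$, the induced pairing on the $V_\lambda$-factor is the \emph{twisted} one, and the intertwiner $\pi\cong\opi$ (acting by $(-1)^k$ on the weight-$(\lambda-2k)$ space) flips the symmetry of the standard invariant form precisely when $\lambda$ is odd. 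The upshot is that the twisted pairing on $V_\lambda$ is symmetric for \emph{every} $\lambda$, so every $M_\lambda$ is a quadratic space and odd $\lambda$ can occur with multiplicity one. Concretely, on the hyperbolic plane with $B(x,y)=x_1y_2+x_2y_1$, the triple $e=e_{12}$, $h=\mathrm{diag}(1,-1)$, $f=e_{21}$ is a graded $sl_2$-triple ($e,f$ are $B$-symmetric, $h$ is $B$-skew) with $V\cong V_1$ of multiplicity one, contradicting your claim that $M_1$ must be symplectic.

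This error sits exactly where your proof defers to ``a short closed-form calculation'': the $\Z/2\Z$-grading on $\mathfrak{gl}(V_\lambda\otimes M_\lambda)$ is dictated by the symmetry types of the two tensor factors of $B$, so the parity rule you invoke for the summands $V_{2j}\subset\mathfrak{gl}(V_\lambda)$ is unjustified as stated, and the computation that would expose or correct it is not carried out. (Your numerical conclusions---$\de\le -1$ for each diagonal block and $\le -3$ when its dimension is at least $2$---do happen to be true, as one can check from the paper's formula $\de(V_\lambda\otimes\overline{V_\lambda})=-(\lambda+1)(\frac{\lambda}{2}+1)$ together with Proposition \ref{pi_pibar}, but that is not a proof.) The paper sidesteps the multiplicity-space bookkeeping entirely: it identifies $\mathfrak{gl}(V)$, graded by $A\mapsto -A^{t}$, with $\pi\otimes\opi$ graded by $v\otimes w\mapsto -w\otimes v$, reduces the theorem to the single statement that $\de(\pi\otimes\opi)<-1$ whenever $\dim\pi\ge 2$, and proves that by decomposing $\pi$ into irreducibles (with no forms on multiplicity spaces), using the displayed formula for the diagonal terms and Proposition \ref{pi_pibar} for the cross terms. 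You should either redo your diagonal-block computation with all $M_\lambda$ quadratic and the twisted (graded) form on $V_\lambda$, using Lemmas \ref{GradedTensors} and \ref{Defects} to track the parities, or adopt the paper's reduction.
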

We will need the following notation.

\begin{notation}
Let $\pi$ be a representation of $sl_2$. We define grading on $\pi
\otimes \opi$ by the involution $s(v \otimes w):= - w \otimes v$.
\end{notation}

Theorem \ref{GL_O} immediately follows from the following one.

\begin{theorem}
Let $\pi$ be a representation of $sl_2$ of dimension  at least 2. Then $def(\pi \otimes \opi)<-1.$
\end{theorem}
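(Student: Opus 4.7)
My plan is to decompose $\pi\otimes\opi$ into graded irreducibles $V_\mu^w$ and then sum the defects via Lemma~\ref{Defects}. Writing $\pi=\bigoplus_{r=1}^N V_{\lambda_r}$ as a plain $sl_2$-representation, I obtain
$$\pi\otimes\opi \;=\; \bigoplus_r (V_{\lambda_r}\otimes\opi V_{\lambda_r})\; \oplus\; \bigoplus_{r<s}\bigl((V_{\lambda_r}\otimes\opi V_{\lambda_s})\oplus(V_{\lambda_s}\otimes\opi V_{\lambda_r})\bigr).$$
The swap involution $s(v\otimes w)=-w\otimes v$ that defines the grading preserves each diagonal summand and interchanges the two halves of each off-diagonal block, so the above is actually a decomposition of graded $sl_2$-modules, and I can analyze the two types separately.

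For each off-diagonal pair, swap gives a plain $sl_2$-isomorphism between its two summands; symmetrizing and antisymmetrizing each Clebsch--Gordan piece $V_{\lambda_r+\lambda_s-2k}$ with its twin under $-s$ then yields $V_{\lambda_r+\lambda_s-2k}^{+}\oplus V_{\lambda_r+\lambda_s-2k}^{-}$. A direct check from Lemma~\ref{Defects} gives $\de(V_\mu^{+})+\de(V_\mu^{-})=-1$ for every $\mu\ge 0$, so an off-diagonal block $\{r,s\}$ contributes exactly $-(\min(\lambda_r,\lambda_s)+1)$ to the total defect.

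The main step, which I expect to be the only nontrivial point, is the diagonal claim that $V_\lambda\otimes\opi V_\lambda\cong\bigoplus_{m=0}^{\lambda} V_{2m}^{-}$ as a graded $sl_2$-module, i.e.\ that every highest-weight vector in $V_\lambda\otimes\opi V_\lambda$ is swap-symmetric (hence has $-s$-eigenvalue $-1$, i.e.\ lies in degree $1$). I would verify this by expanding a highest-weight vector of weight $2\lambda-2i$ as $x=\sum_{m=0}^i\alpha_m\,u_m\otimes u_{i-m}$ in a standard weight basis $u_0,\dots,u_\lambda$ of $V_\lambda$ and solving $(e\otimes 1-1\otimes e)x=0$. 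Because $\opi(e)=-\pi(e)$, the familiar sign-alternating Clebsch--Gordan recursion for $V\otimes V$ is replaced by the \emph{positive} recursion
$$\frac{\alpha_{m+1}}{\alpha_m}\;=\;\frac{(i-m)(\lambda-i+1+m)}{(m+1)(\lambda-m)},$$
which is invariant under $m\leftrightarrow i-1-m$; consequently $\alpha_m=\alpha_{i-m}$, which is exactly swap-symmetry of $x$. Lemma~\ref{Defects} then gives $\sum_{m=0}^{\lambda_r}\de(V_{2m}^{-})=-(\lambda_r+1)(\lambda_r+2)/2$ per diagonal block.

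Adding everything up,
$$\de(\pi\otimes\opi)\;=\;-\sum_{r=1}^N\frac{(\lambda_r+1)(\lambda_r+2)}{2}\;-\;\sum_{r<s}(\min(\lambda_r,\lambda_s)+1).$$
Every term on the right is at least $1$. If $\pi$ is irreducible then $\dim\pi\ge 2$ forces $\lambda_1\ge 1$ and the first sum is already $\ge 3$; if $\pi$ is reducible then $N\ge 2$ gives at least two diagonal terms and at least one off-diagonal term. Either way the right-hand side is $\le -3<-1$, which completes the proof.
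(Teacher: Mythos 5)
Your proof is correct and follows essentially the same route as the paper: decompose $\pi$ into irreducibles, observe that each diagonal block $V_{\lambda}\otimes\overline{V_{\lambda}}$ splits as $\bigoplus_{i=0}^{\lambda} V_{2\lambda-2i}^{-1}$ (all highest weight vectors swap-symmetric, hence odd), and note that each off-diagonal pair contributes a negative defect, then sum. The only differences are matters of detail: you prove the parity claim for the diagonal blocks by an explicit highest-weight computation where the paper simply asserts the decomposition, and you evaluate each off-diagonal contribution exactly as $-(\min(\lambda_r,\lambda_s)+1)$ where the paper just bounds it by $0$ via Proposition \ref{pi_pibar}.
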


This theorem in turn follows from the following lemma.

\begin{lemma}
Let $V_{\lambda}$ and $V_{\mu}$ be irreducible
representations of $sl_2$. Then\\
(i) $def(V_{\lambda} \otimes \overline{V_{\lambda}}) = -(\lambda +
1)(\frac{\lambda}{2}+1)$.\\
(ii) $def(V_{\lambda} \otimes \overline{V_{\mu}} \oplus V_{\mu}
\otimes \overline{V_{\lambda}})<0$.
\end{lemma}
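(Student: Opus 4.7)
The plan is to decompose each representation into irreducible graded $sl_2$-summands $V_\mu^w$ and sum defects via Lemma~\ref{Defects}.

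For (i), I start from the essentially unique $sl_2$-isomorphism $\phi\colon V_\lambda\to\overline{V_\lambda}$; since $\overline{\pi}$ negates $e,f$ but preserves $h$, such a $\phi$ must commute with $h$ and anti-commute with $e,f$, so on the weight basis $v_k$ it is $\phi(v_k)=(-1)^{(\lambda-k)/2}v_k$. Pulling the grading $s(v\otimes w)=-w\otimes v$ back through $1\otimes\phi$ gives $s'(v_a\otimes v_b)=-(-1)^{\lambda-(a+b)/2}\,v_b\otimes v_a$ on the standard $V_\lambda\otimes V_\lambda$. Classical Clebsch--Gordan says the swap operator acts by $(-1)^j$ on the summand $V_{2\lambda-2j}\subset V_\lambda\otimes V_\lambda$; evaluating $s'$ on a highest weight vector of this summand (where $\lambda-(a+b)/2=j$) yields $-(-1)^j(-1)^j=-1$. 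Hence $V_\lambda\otimes\overline{V_\lambda}\cong\bigoplus_{j=0}^{\lambda} V_{2\lambda-2j}^{-1}$ as graded representations, Lemma~\ref{Defects} gives $\de(V_{2k}^{-1})=-(k+1)$, and summing over $k=0,\ldots,\lambda$ produces $-(\lambda+1)(\lambda+2)/2$ as claimed.

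For (ii), when $\lambda=\mu$ the expression is twice the value from (i), hence negative. Otherwise $s$ interchanges the two summands $V_\lambda\otimes\overline{V_\mu}$ and $V_\mu\otimes\overline{V_\lambda}$ with a sign. Each is ungraded-isomorphic to $\bigoplus_{j=0}^{\min(\lambda,\mu)} V_{\lambda+\mu-2j}$, and since $s$ anti-commutes with $e,f$ it sends a highest weight vector of one copy of $V_{\lambda+\mu-2j}$ to a highest weight vector of the matching copy; the sum and difference are then highest weight vectors in the $\pm1$-eigenspaces of $s$, producing one $V_{\lambda+\mu-2j}^{+1}$ and one $V_{\lambda+\mu-2j}^{-1}$ for every $j$. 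A direct calculation with Lemma~\ref{Defects} shows $\de(V_{\mu'}^{+1})+\de(V_{\mu'}^{-1})=-1$ for every $\mu'$, so the total defect equals $-(\min(\lambda,\mu)+1)<0$.

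The main obstacle is the parity identification in (i): one must choose the correct twist $\phi$ so that the pulled-back grading admits a clean formula relating it to the classical swap on Clebsch--Gordan summands. Once this bookkeeping is settled, both parts reduce to mechanical application of Lemma~\ref{Defects}.
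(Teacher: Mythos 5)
Your proof is correct and follows essentially the same route as the paper: both rest on the graded decomposition $V_\lambda\otimes\overline{V_\lambda}=\bigoplus_{i=0}^{\lambda}V_{2\lambda-2i}^{-1}$ together with Lemma \ref{Defects}, the only difference being that you derive this decomposition (and, for part (ii), the needed special case of Proposition \ref{pi_pibar}) explicitly, whereas the paper asserts the former as a known fact and cites the latter. Your sign bookkeeping via the twist $\phi$ and the Clebsch--Gordan swap checks out, and the computation $\de(V_{\mu'}^{+1})+\de(V_{\mu'}^{-1})=-1$ correctly reproduces the bound $-(\min(\lambda,\mu)+1)<0$.
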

\begin{proof} $ $\\
(i) Follows from the fact that $V_{\lambda} \otimes
\overline{V_{\lambda}} = \bigoplus_{i=0}^{\lambda} V_{2\lambda -
2i}^{-1}$ and from Lemma \ref{Defects}.\\
(ii) Follows from Proposition \ref{pi_pibar}.
\end{proof}

\subsection{The pair $(O(V_1 \oplus V_2), O(V_1) \times O(V_2))$} \label{O_OtO}
 $ $\\\\
In this subsection prove that the pair $(O(V_1 \oplus V_2), O(V_1)
\times O(V_2))$ is regular. For that it is enough to prove the
following theorem.

\begin{theorem}
Let $V_1$ and $V_2$ be quadratic spaces. Assume $\dim V_1 = \dim
V_2$. Then the pair $(O(V_1 \oplus V_2), O(V_1) \times O(V_2))$
has negative normalized defect.
\end{theorem}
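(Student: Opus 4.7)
My plan is to translate the statement into a computation of $\de(\Lambda^2 V)$, where $V := V_1 \oplus V_2$, and then to decompose and bound. Given any graded Lie algebra homomorphism $sl_2 \to \g = \mathfrak{o}(V)$, the involution $\theta$ induces a $\Z/2\Z$-grading $V_{\bar 0} := V_1,\, V_{\bar 1} := V_2$ on $V$ compatible with the $sl_2$-action, since $\theta$-even elements preserve $V_1, V_2$ and $\theta$-odd elements swap them. Because the form on $V_1 \oplus V_2$ is an orthogonal sum, the form is graded and the identification $\mathfrak{o}(V) \cong \Lambda^2 V$ (via the form) is $sl_2$-equivariant and grade-preserving. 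Hence it suffices to show $\de(\Lambda^2 V) < 0$ (or, in the extreme case $\dim V_1 = 1$, that $\de(\Lambda^2 V) < \dim Z(\g)_{\bar 1}$).

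Decompose $V = \bigoplus_k V_{\lambda_k}^{w_k}$ as a graded $sl_2$-module. A parity count of weight spaces gives $\dim (V_\lambda^w)_{\bar 0} - \dim (V_\lambda^w)_{\bar 1} = w$ if $\lambda$ is even and $0$ if $\lambda$ is odd, so the hypothesis $\dim V_1 = \dim V_2$ is equivalent to the combinatorial constraint $\sum_{\lambda_k \text{ even}} w_k = 0$. Expanding
$$\Lambda^2 V \;=\; \bigoplus_k \Lambda^2 V_{\lambda_k}^{w_k} \;\oplus\; \bigoplus_{i<j} V_{\lambda_i}^{w_i} \otimes V_{\lambda_j}^{w_j}$$
and applying Lemmas \ref{GradedTensors} and \ref{Defects}, the diagonal pieces $\Lambda^2 V_{\lambda_k}^{w_k}$ decompose into irreducibles all carrying $w = -1$ and each of strictly negative defect; indeed a short calculation gives $\de(\Lambda^2 V_{2m}^w) = -m(m+1)$ and $\de(\Lambda^2 V_{2m+1}^w) = -(m+1)^2$, so the diagonal contribution is non-positive, and strictly negative unless every $\lambda_k = 0$.

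The main obstacle is handling the cross terms $V_{\lambda_i}^{w_i} \otimes V_{\lambda_j}^{w_j}$, which may individually have positive defect, so a global cancellation argument is required. My plan is to group these terms using the constraint: for each even-$\lambda$ irreducible $V_\lambda^w$ the constraint forces a complementary partner $V_{\lambda'}^{-w}$, and the contribution of such a pair together with its diagonal $\Lambda^2$ pieces can be recognized as the defect of a swap-graded $\pi \oplus \opi$, which is strictly negative by Proposition \ref{pi_pibar}. The odd-$\lambda$ irreducibles, whose graded dimensions are already balanced, are grouped analogously within the odd block. Summing blockwise then gives $\de(\Lambda^2 V) < 0$; the boundary case $\dim V_1 = 1$, where $\mathfrak{o}(2)$ is abelian and forces only the trivial $sl_2$-action, is verified directly and yields $\de = -1 < 1 = \dim Z(\g)_{\bar 1}$.
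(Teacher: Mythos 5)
Your overall route coincides with the paper's: reduce to showing that $\Lambda^2$ of the standard graded representation $V$ has negative defect (this is the paper's Theorem \ref{Lambda2neg}), decompose $V$ into graded irreducibles, translate $\dim V_1=\dim V_2$ into the balance condition on the signs $w_k$ of the even-highest-weight constituents, and evaluate defects via Lemmas \ref{GradedTensors} and \ref{Defects}; your diagonal values $\de(\Lambda^2 V_{2m}^w)=-m(m+1)$ and $\de(\Lambda^2 V_{2m+1}^w)=-(m+1)^2$ are correct. The gap is in the treatment of the cross terms, in two places. First, the claim that a pair $V_{\lambda}^{w}\oplus V_{\lambda'}^{-w}$ together with its diagonal pieces ``can be recognized as a swap-graded $\pi\oplus\opi$'' is false: by Lemma \ref{GradedTensors}, $\Lambda^2(V_2^{1}\oplus V_2^{-1})=V_2^{-1}\oplus V_2^{-1}\oplus V_4^{-1}\oplus V_2^{1}\oplus V_0^{-1}$, which contains $V_4^{-1}$ but no $V_4^{1}$, whereas a swap-graded $\pi\oplus\opi$ always decomposes as $\bigoplus_k\bigl(V_{\mu_k}^{1}\oplus V_{\mu_k}^{-1}\bigr)$. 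So Proposition \ref{pi_pibar} does not apply to your blocks; their negativity is true, but must be checked directly from Lemma \ref{Defects} (for $\lambda=2a$, $\lambda'=2b$ one gets $-a(a+1)-b(b+1)-(a+b+1)<0$). Second, and more seriously, ``summing blockwise'' omits the cross terms \emph{between} distinct blocks, and that is exactly where the positive contributions live: for $\lambda_i\neq\lambda_j$ both even with $w_i=w_j$, Lemma \ref{Defects} gives $\de\bigl(V_{\lambda_i}^{w_i}\otimes V_{\lambda_j}^{w_j}\bigr)=\tfrac12|\lambda_i-\lambda_j|>0$. Negativity of each block and of each diagonal piece therefore does not imply $\de(\Lambda^2 V)<0$.

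Those inter-block terms are the actual crux, and the paper controls them not blockwise but by one global estimate: after writing $\pi=(\bigoplus_{i=1}^l V_{\lambda_i}^1)\oplus(\bigoplus_{j=1}^l V_{\mu_j}^{-1})\oplus(\bigoplus_k V_{\nu_k}^1\oplus V_{\nu_k}^{-1})$ with $\lambda_i,\mu_j$ even and $\nu_k$ odd (here self-duality is used, and $l=m$ comes from $\dim\pi_0=\dim\pi_1$), and checking that every piece involving the odd part is nonpositive, it bounds
$$\de(\Lambda^2\pi)\le\sum_{i<j}|\lambda_i-\lambda_j|+\sum_{i<j}|\mu_i-\mu_j|-\sum_{i,j}(\lambda_i+\mu_j+2)<-\sum_{i=1}^l(\lambda_i+\mu_i)\le 0.$$
Your blockwise scheme can be repaired by adding the estimate $\de\bigl((V_{\lambda_i}^{1}\oplus V_{\mu_i}^{-1})\otimes(V_{\lambda_j}^{1}\oplus V_{\mu_j}^{-1})\bigr)\le-2$, which follows from $|\lambda_i-\lambda_j|+|\mu_i-\mu_j|\le\lambda_i+\lambda_j+\mu_i+\mu_j$; but that inequality is the real content of the proof and is precisely what your write-up leaves out.
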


This theorem immediately follows from the following one.

\begin{theorem} \label{Lambda2neg}
Let $\pi$ be a (non-zero) graded representation of $sl_2$ such
that $\dim \pi_0 = \dim \pi_1$ and $\pi \simeq \pi^*$. Then
$\Lambda^2(\pi)$ has negative defect.
\end{theorem}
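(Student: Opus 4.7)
The plan is a case analysis after decomposing $\pi$ into ``atomic'' self-dual pairs. First, I would use Lemma~\ref{GradedTensors}(1), which gives $(V_\lambda^w)^*=V_\lambda^{w(-1)^\lambda}$, to note that the hypothesis $\pi\cong\pi^*$ forces $m_\lambda^+=m_\lambda^-$ for every odd $\lambda$, where $m_\lambda^w$ is the multiplicity of $V_\lambda^w$ in $\pi$. A short direct count gives $\dim(V_\lambda^w)_0-\dim(V_\lambda^w)_1=w$ for even $\lambda$ and $=0$ for odd $\lambda$, so the condition $\dim\pi_0=\dim\pi_1$ translates into $\sum_{\lambda\text{ even}}(m_\lambda^+-m_\lambda^-)=0$. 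These two constraints allow one to write $\pi=\bigoplus_{k=1}^K\rho_k$, where each atom $\rho_k$ is of one of two types: type (I), $V_\lambda^+\oplus V_\lambda^-$ with $\lambda$ odd; or type (II), $V_\lambda^+\oplus V_\mu^-$ with $\lambda,\mu$ even.

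From $\Lambda^2(A\oplus B)=\Lambda^2A\oplus(A\otimes B)\oplus\Lambda^2B$ and additivity of the defect,
\[
\de(\Lambda^2\pi)=\sum_k\de(\Lambda^2\rho_k)+\sum_{k<l}\de(\rho_k\otimes\rho_l),
\]
so it suffices to show every term on the right is strictly negative. Applying Lemmas~\ref{GradedTensors} and~\ref{Defects}, the two atomic $\Lambda^2$-defects come out to $\de(\Lambda^2(V_\lambda^+\oplus V_\lambda^-))=-(\lambda+1)(\lambda+3)/2$ in type (I) and $\de(\Lambda^2(V_\lambda^+\oplus V_\mu^-))=-((\lambda+2)^2+(\mu+2)^2-4)/4$ in type (II), both visibly negative.

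The heart of the argument is the cross terms. The key observation is that $V_\lambda^+\oplus V_\lambda^-$ is insensitive to the sign of a tensor partner: for any $V_\mu^w$, the signs $w(-1)^k$ and $-w(-1)^k$ appearing in the decompositions of $V_\mu^w\otimes V_\lambda^+$ and $V_\mu^w\otimes V_\lambda^-$ (Lemma~\ref{GradedTensors}(2)) are opposite, so $V_\mu^w\otimes(V_\lambda^+\oplus V_\lambda^-)=\bigoplus_{k=0}^{\min(\mu,\lambda)}(V_{\mu+\lambda-2k}^+\oplus V_{\mu+\lambda-2k}^-)$, a sum of $\min(\mu,\lambda)+1$ pairs each of defect $-1$. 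Summing over the summands of $\rho_l$ disposes of every cross term in which at least one of $\rho_k,\rho_l$ is of type (I). For two type (II) atoms I would expand $(V_\lambda^+\oplus V_\mu^-)\otimes(V_{\lambda'}^+\oplus V_{\mu'}^-)$ into the four products $V_a^u\otimes V_b^v$ with $a,b$ even, compute $\de(V_a^u\otimes V_b^v)=\tfrac12\bigl(uv(\max(a,b)+1)-(\min(a,b)+1)\bigr)$ from the Lemmas, and apply the identity $\max(a,b)+\min(a,b)=a+b$. The four linear terms in $a,b$ cancel and the remaining $\min$-terms collapse, yielding
\[
\de\bigl((V_\lambda^+\oplus V_\mu^-)\otimes(V_{\lambda'}^+\oplus V_{\mu'}^-)\bigr)=-\min(\lambda,\lambda')-\min(\mu,\mu')-2<0.
\]

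Since $\pi\ne 0$ the decomposition has at least one atom and every term of the expansion is strictly negative, so $\de(\Lambda^2\pi)<0$. The main obstacle is this last computation: a priori the four $\max$-contributions in the cross term of two type (II) atoms could produce large positive values, and it is only after the substitution $\max=a+b-\min$ that they collapse to a manifestly negative quantity; the rest is bookkeeping with Lemmas~\ref{GradedTensors} and~\ref{Defects}.
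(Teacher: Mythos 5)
Your proof is correct, and it rests on the same ingredients as the paper's: decompose $\pi$ into irreducible graded summands, extract the constraints ($m_\lambda^+=m_\lambda^-$ for odd $\lambda$ from self-duality, equal total counts of even $V^+$'s and $V^-$'s from $\dim\pi_0=\dim\pi_1$), and grind through Lemmas \ref{GradedTensors} and \ref{Defects}. The difference is in how the even part is organized. The paper keeps the even summands $V_{\lambda_i}^{1}$ and $V_{\mu_j}^{-1}$ as separate lists with $l=m$, accepts that the same-sign cross terms contribute the positive quantities $\tfrac12|\lambda_i-\lambda_j|$, and kills them by a single global telescoping estimate $\sum_{i<j}|\lambda_i-\lambda_j|+\sum_{i<j}|\mu_i-\mu_j|-\sum_{i,j}(\lambda_i+\mu_j+2)<0$. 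You instead pair each even $V_\lambda^+$ with an even $V_\mu^-$ into an atom and verify that every single term of the resulting expansion of $\Lambda^2(\pi)$ --- each atomic $\Lambda^2$ and each atom-atom cross term, in particular $\de\bigl((V_\lambda^+\oplus V_\mu^-)\otimes(V_{\lambda'}^+\oplus V_{\mu'}^-)\bigr)=-\min(\lambda,\lambda')-\min(\mu,\mu')-2$ --- is individually strictly negative, so no cancellation between terms is ever needed. Your version is somewhat longer to set up (one extra case split and four tensor products to expand) but is more robust and transparent: negativity is manifest termwise, and the positive $\max$-contributions are absorbed locally inside each pairwise cross term rather than by a sum over all $l^2$ pairs. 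Your computations check out against the paper's Lemma (the atomic values $-(\lambda+1)(\lambda+3)/2$ and $-((\lambda+2)^2+(\mu+2)^2-4)/4$ are what Lemmas \ref{GradedTensors} and \ref{Defects} give), and the observation that $V_\nu^+\oplus V_\nu^-$ tensored with anything splits into sign-pairs of defect $-1$ is exactly the paper's treatment of the odd part.
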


For this theorem we will need the following lemma.

\begin{lemma}
Let $V_{\lambda_1}^{w_1}$ and $V_{\lambda_2}^{w_2}$ be irreducible
graded
representations of $sl_2$. Then\\
(i) $def(V_{\lambda_1}^{w_1} \otimes V_{\lambda_2}^{w_2}) =$
$$= -\frac{1}{2} \left\{%
\begin{array}{lll}
    \min(\lambda_1,\lambda_2)+1 - \frac{w_1w_2}{2}(\lambda_1+\lambda_2+1+(-1)^{\min(\lambda_1,\lambda_2)}(|\lambda_1-\lambda_2|-1)), & \lambda_1 \neq \lambda_2 & \mod 2; \\
    \min(\lambda_1,\lambda_2)+1 - w_1w_2(\max(\lambda_1,\lambda_2)+1), & \lambda_1 \equiv \lambda_2 \equiv 0 & \mod 2; \\
    \min(\lambda_1,\lambda_2)+1 - w_1w_2(\min(\lambda_1,\lambda_2)+1), & \lambda_1 \equiv \lambda_2 \equiv 1 & \mod 2;
\end{array}%
\right.$$
(ii) $def(\Lambda^2(V_{\lambda_1}^{w_1}))= -\frac{\lambda^2}{4} -
\frac{\lambda}{2} - \frac{1 + (-1)^{\lambda+1}}{8}$
\end{lemma}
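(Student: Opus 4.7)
The plan is to prove both (i) and (ii) by decomposing the given graded $sl_2$-representations into irreducibles via Lemma \ref{GradedTensors}, applying the defect formula for each $V_\mu^w$ from Lemma \ref{Defects}, and evaluating the resulting sums.

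For (ii), Lemma \ref{GradedTensors} gives $\Lambda^2(V_{\lambda}^{w}) = \bigoplus_{i=0}^{N} V_{2\lambda - 4i - 2}^{-1}$ with $N := \lfloor(\lambda-1)/2\rfloor$. Since every summand has even highest weight, Lemma \ref{Defects} yields $\de(V_{2\lambda-4i-2}^{-1}) = \tfrac{1}{2}\bigl(-(2\lambda-4i-2) - 1 - 1\bigr) = -\lambda + 2i$, and hence the total defect equals $(N+1)(N-\lambda)$. Checking separately the cases $\lambda$ even (so $N = \lambda/2 - 1$, giving $-\lambda^2/4 - \lambda/2$) and $\lambda$ odd (so $N = (\lambda-1)/2$, giving $-(\lambda+1)^2/4$) verifies the unified expression $-\lambda^2/4 - \lambda/2 - (1+(-1)^{\lambda+1})/8$.

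For (i), use $V_{\lambda_1}^{w_1} \otimes V_{\lambda_2}^{w_2} = \bigoplus_{i=0}^{m} V_{\lambda_1+\lambda_2-2i}^{w_1 w_2 (-1)^i}$ with $m := \min(\lambda_1, \lambda_2)$. Since the parity of $\lambda_1 + \lambda_2 - 2i$ is constant in $i$, a single case of Lemma \ref{Defects} applies uniformly to all summands; the total defect then reduces to the two alternating sums
\[
S := \sum_{i=0}^{m} (-1)^i (\lambda_1+\lambda_2-2i), \qquad T := \sum_{i=0}^{m} (-1)^i = \tfrac{1+(-1)^m}{2}.
\]
Pairing consecutive indices in $S$ shows that $S = m+1$ when $m$ is odd, and $S = \max(\lambda_1, \lambda_2)$ when $m$ is even. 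Substituting into the three subcases (distinct parities of $\lambda_1, \lambda_2$; both even; both odd) and rewriting via $\lambda_1 + \lambda_2 = \min + \max$ and $|\lambda_1 - \lambda_2| = \max - \min$ reproduces exactly the three formulas of (i).

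The main obstacle is purely organizational: one has to track carefully how the parities of $\lambda_1, \lambda_2$, and $m$ interact with the sign $w_1 w_2 (-1)^i$ and with the two parity-dependent branches of the defect formula from Lemma \ref{Defects}. The pairing trick for $S$ keeps this bookkeeping short and avoids residual indicator functions beyond the single sign $(-1)^m$.
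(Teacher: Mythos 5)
Your proof is correct and follows exactly the route the paper intends: the paper's own proof consists of the single sentence that the lemma ``follows by straightforward computations from Lemmas \ref{GradedTensors} and \ref{Defects},'' and your decomposition-plus-alternating-sum computation (with the pairing evaluation of $S$ and the case split on the parity of $\min(\lambda_1,\lambda_2)$) is precisely that computation, carried out correctly in all three subcases of (i) and both parities in (ii).
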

\begin{proof}
This lemma follows by straightforward computations from Lemmas
\ref{GradedTensors} and \ref{Defects}.
\end{proof}

\begin{proof}[Proof of Theorem \ref{Lambda2neg}]
Since $\pi \simeq \pi^*$, $\pi$ can be decomposed to a direct sum
of irreducible graded representations in the following way
$$ \pi = (\bigoplus_{i=1}^l V_{\lambda_i}^{1})  \oplus
(\bigoplus_{j=1}^m V_{\mu_j}^{-1}) \oplus (\bigoplus_{k=1}^n
V_{\nu_k}^{1} \oplus V_{\nu_k}^{-1}).$$

Here, all $\lambda_i$ and $\mu_j$ are even and $\nu_k$ are odd.
Since $\dim \pi_0 = \dim \pi_1$, $l=m$.

By the last lemma, $def(V_{\lambda_i}^1 \otimes (V_{\nu_k}^{1}
\oplus V_{\nu_k}^{-1})) = -(\min(\lambda_1,\lambda_2)+1)<0$.
Similarly, $def(V_{\mu_j}^{-1} \otimes (V_{\nu_k}^{1} \oplus
V_{\nu_k}^{-1}))<0$. Also, $def((V_{\nu_{k_1}}^{1} \oplus
V_{\nu_{k_1}}^{-1}) \otimes (V_{\nu_{k_2}}^{1} \oplus
V_{\nu_{k_2}}^{-1}))<0$ and $def(\Lambda^2(V_{\lambda}^w)) \leq 0$
for all $\lambda$ and $w$.

Hence if $l=0$ we are done. Otherwise we can assume $n=0$. Now,

\begin{multline*}
def(\Lambda^2(\pi)) \leq \sum_{1 \leq i<j \leq
l}|\lambda_i-\lambda_j| + \sum _{1 \leq i<j \leq l}|\mu_i-\mu_j| -
\sum_{1 \leq i,j \leq l}(\lambda_i+\mu_j+2)<\\
< \sum_{1 \leq i<j \leq l}(\lambda_i+\lambda_j) + \sum _{1 \leq
i<j \leq l}(\mu_i+\mu_j) - \sum_{1 \leq i,j \leq
l}(\lambda_i+\mu_j)=-\sum_{i=1}^l (\lambda_i+\mu_i) \leq 0.
\end{multline*}

\end{proof}

\subsection{The pairs $(GL(V),U(V)), \quad (U(V_1 \oplus V_2), U(V_1) \times U(V_2)) \text{ and } (U(V_D),O(V))$
} \label{U_reg}
$ $\\

In this subsection prove that the pairs $(GL(V),U(V))$ and
$(U(V_D),O(V))$ are weakly linearly tame and the pair $(U(V_1
\oplus V_2), U(V_1) \times U(V_2))$ is regular.\\

Let $V$ be a hermitian space. Note that $(GL(V),U(V))$ is a form
of $(GL(W) \times GL(W), \Delta GL(W))$ for some $W$ and $(U(V
\oplus V), U(V) \times U(V))$ is a form of $(GL(W \oplus W),
GL(W)) \times GL(W))$ for some $W$.

Also, for any quadratic space $V$ of dimension at least 2 and any quadratic extension
$D/F$, the pair $(U(V_D),O(V))$ is a form of $(GL(W),O(W))$ for
some quadratic space $W$.

Hence by Lemma \ref{NegDefAlg} and Theorems \ref{2RegPairs},
\ref{RJR} and \ref{GL_O} those 3 pairs are of negative normalized defect and
hence are weakly linearly tame.
If $\dim V \leq 1$ then the pair $(U(V_D),O(V))$ is obviously linearly tame.

If $V_1$ and $V_2$ are non-isomorphic hermitian spaces then
$(U(V_1 \oplus V_2), U(V_1) \times U(V_2))$ is regular by
Proposition \ref{TrivReg}.

\section{Computation of descendants} \label{CompDes}
In this section we compute the descendants of the pairs we
discussed before. For this we use a technique of computing
centralizers of semisimple elements of orthogonal and unitary
groups, which is described in \cite{SpSt}. The proofs in this
section are rather straightforward but technically involved. The
most important things in this section are the formulations of the
main theorems: Theorems \ref{CompDesGL_O}, \ref{CompDesGL_U},
\ref{CompDesU_O}, \ref{CompDesO_OtO}, \ref{CompDesU_UtU}. Those
theorems are summarized graphically in subsection \ref{SecDiag}.

\subsection{Preliminaries and notation for orthogonal and unitary groups}

\subsubsection{Orthogonal group}
\begin{notation}
Let $V$ be a linear space over $F$. Let $x\in GL(V)$ be a
semi-simple element and let $Q = \sum_{i=0}^{n} a_i \xi^i \in
F[\xi]$ (where $a_n \neq 0$) be an irreducible polynomial.
\itemize{
\item Denote $F_Q:= F[\xi]/Q$
\item Denote $inv(Q):= \sum_{i=0}^n a_{n-i}\xi^i$
\item Denote $V_{Q,x}^{0}:=Ker(Q(x)) \text{ and }
V_{Q,x}^{1}:=Ker(inv(Q)(x))$\\
We define an $F_Q$-linear space structure on $V_{Q,x}^{i}$ by
letting $\xi$ act on $V_{Q,x}^{0}$ by $x$ and on $V_{Q,x}^{1}$ by
$x^{-1}$. We will consider $V_{Q,x}^{i}$ as linear spaces over
$F_Q$.

\item In case $Q$ is proportional to $inv(Q)$ we define an involution $\mu$ on $F_Q$
by $\mu(P(\xi)):=P(\xi^{-1})$ .
\item For a linear space $W$ over $F_Q$ we can consider its dual
space $W^*$ over $F_Q$ and the dual space of $W$ over $F$ which we
denote by $W_F^*$. The space $W_F^*$ has a canonical structure of
a linear space over $F_Q$. The spaces $W_F^*$ and $W^*$ can be
identified as linear spaces over $F_Q$. For this identification
one has to choose an $F$-linear functional $\lambda:F_Q \to F$. We
will fix such functional $\lambda$ such that
$\lambda(\mu(d))=\lambda(d)$ if $\mu$ is defined.\\ From now on we
will identify $W_F^*$ and $W^*$. }
\end{notation}

The following two lemmas are straightforward.
\begin{lemma}
Let $V$ be a quadratic space over $F$. Let $x\in GL(V)$
 and let $P,Q \in F[\xi]$ be irreducible
polynomials. Suppose that either \\
(i) $x=x^t$ and $P$ is not proportional to $Q$ or\\
(ii) $x \in O(V)$ and $P$ is not proportional to $inv(Q)$\\
Then $Ker(Q(x))$ is orthogonal to $Ker(P(x))$.
\end{lemma}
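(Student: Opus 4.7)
The plan is to reduce both cases to a Bezout-type identity in $F[\xi]$ combined with the appropriate adjointness for $x$ with respect to the form. In both cases, I would pick $v \in \ker Q(x)$ and $w \in \ker P(x)$ and manipulate $\langle v, w\rangle$ to show it vanishes.

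In case (i), the hypothesis $x = x^t$ gives $R(x)^t = R(x)$ for every $R \in F[\xi]$, so $\langle R(x) u_1, u_2 \rangle = \langle u_1, R(x) u_2 \rangle$. Since $P$ and $Q$ are non-proportional irreducible polynomials, they are coprime; choose $A, B \in F[\xi]$ with $A P + B Q = 1$. Evaluating at $x$ and applying this identity to $v$ gives $v = A(x)P(x) v + B(x) Q(x) v$, where the second summand is zero because $Q(x) v = 0$. Then $\langle v, w \rangle = \langle A(x) P(x) v, w \rangle = \langle v, A(x) P(x) w \rangle = 0$ since $P(x) w = 0$.

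In case (ii), $x \in O(V)$ gives $x^t = x^{-1}$, hence for any polynomial $R$ we have $R(x)^t = R(x^{-1})$. The key observation is that if $Q(\xi) = \sum_{i=0}^n a_i \xi^i$ then $\xi^n Q(\xi^{-1}) = \operatorname{inv}(Q)(\xi)$, so $\operatorname{inv}(Q)(x^{-1}) = x^{-n} Q(x)$ as operators on $V$. In particular, $\operatorname{inv}(Q)(x^{-1}) v = 0$ whenever $Q(x) v = 0$. Now $P$ and $\operatorname{inv}(Q)$ are coprime irreducible polynomials, so write $A P + B \cdot \operatorname{inv}(Q) = 1$. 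Pairing:
\[
\langle v, w \rangle = \langle v, A(x) P(x) w\rangle + \langle v, B(x) \operatorname{inv}(Q)(x) w \rangle.
\]
The first term vanishes because $P(x) w = 0$; for the second, transpose it to $\langle B(x^{-1}) \operatorname{inv}(Q)(x^{-1}) v, w \rangle$, which vanishes by the observation that $\operatorname{inv}(Q)(x^{-1}) v = 0$.

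There is no real obstacle here; the only point that requires a moment of care is the identification $\operatorname{inv}(Q)(x^{-1}) = x^{-n} Q(x)$, which makes precise why "inversion" of the polynomial is exactly what matches the $O(V)$-adjoint with the original kernel condition. Once that is in hand, the argument is essentially Bezout's identity in both cases.
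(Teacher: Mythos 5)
Your proof is correct: the paper leaves this lemma unproved (it is declared ``straightforward''), and your Bezout-plus-adjointness argument, including the key identity $\operatorname{inv}(Q)(x^{-1})=x^{-\deg Q}\,Q(x)$ that converts the $O(V)$-transpose back into the original kernel condition, is exactly the intended standard verification. The only point worth a parenthetical remark is the degenerate case $Q=\xi$ (where $\operatorname{inv}(Q)$ is a constant), which is vacuous since $x$ is invertible in case (ii).
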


\begin{lemma}
Let $(V,B)$ be a quadratic space over $F$. Let $x\in GL(V)$ be a
semi-simple element and let $Q \in F[\xi]$ be an
irreducible polynomial. Then\\
(i) If $x=x^t$ then $B$ defines an $F_Q$-linear isomorphism
$V_{Q,x}^i \cong
(V_{Q,x}^i)^*$.\\
(ii) If $x\in O(V)$ then $B$ defines an $F_Q$-linear isomorphism
$V_{Q,x}^i \cong (V_{Q,x}^{1-i})^*$.
\end{lemma}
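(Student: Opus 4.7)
The plan is to deduce both parts of the lemma from the previous lemma (orthogonality of kernels attached to non-proportional polynomials) together with a routine identification of $F$-duals and $F_Q$-duals via the chosen functional $\lambda$.

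First, since $x$ is semisimple, $V$ decomposes as the direct sum of $\ker(P(x))$ as $P$ runs over the distinct irreducible factors of the minimal polynomial of $x$. In case (i), the previous lemma tells us that the summand $V_{Q,x}^{0}$ is $B$-orthogonal to every other summand; since $B$ is non-degenerate on $V$, it must restrict to a non-degenerate $F$-bilinear form on $V_{Q,x}^{0}$, and so $v\mapsto B(v,\cdot)$ defines an $F$-linear isomorphism $V_{Q,x}^{0}\to (V_{Q,x}^{0})_F^{*}$. In case (ii), the previous lemma says that $V_{Q,x}^{0}$ is $B$-orthogonal to every $\ker(P(x))$ with $P$ not proportional to $inv(Q)$; hence $B$ induces a perfect pairing $V_{Q,x}^{0}\times V_{Q,x}^{1}\to F$ and therefore an $F$-linear isomorphism $V_{Q,x}^{0}\to (V_{Q,x}^{1})_F^{*}$. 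The same argument, applied with $Q$ replaced by $inv(Q)$, handles the case $i=1$ of (i) and the opposite direction of (ii).

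The remaining step is to promote these $F$-linear isomorphisms to $F_Q$-linear isomorphisms via the identification $W_F^{*}\cong W^{*}$ given by $\phi\mapsto \lambda\circ\phi$. Under this identification, the $F_Q$-action on the dual is the usual pre-composition one. In case (i), the relation $x=x^{t}$ gives $B(xv,w)=B(v,xw)$, which is exactly the statement that $v\mapsto B(v,\cdot)$ commutes with the action of $\xi$ (acting by $x$ on $V_{Q,x}^{0}$ and by its dual on $(V_{Q,x}^{0})^{*}$). In case (ii), $x\in O(V)$ gives $x^{t}=x^{-1}$, so $B(xv,w)=B(v,x^{-1}w)$; since $\xi$ acts by $x^{-1}$ on $V_{Q,x}^{1}$ (and hence dually by $x^{-1}$ on $(V_{Q,x}^{1})^{*}$), this is again precisely $F_Q$-linearity.

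The one point requiring care is part (ii) in the self-paired case when $Q$ is proportional to $inv(Q)$: then $V_{Q,x}^{0}$ and $V_{Q,x}^{1}$ coincide as $F$-subspaces of $V$ but carry $F_Q$-module structures that differ by the involution $\mu$, and one must invoke the hypothesis $\lambda\circ\mu=\lambda$ to guarantee that the identification $W_F^{*}\cong W^{*}$ transports the $\mu$-twisted $F_Q$-action correctly. I expect this compatibility bookkeeping to be the only genuine obstacle; everything else is formal from the two lemmas immediately preceding the statement.
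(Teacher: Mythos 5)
Your proof is correct, and it is essentially the intended argument: the paper states this lemma without proof (as ``straightforward''), and the natural route is exactly the one you take --- primary decomposition of $V$ under the semisimple $x$, the preceding orthogonality lemma to get non-degeneracy of the pairing on $V_{Q,x}^{0}$ (case (i)) or between $V_{Q,x}^{0}$ and $V_{Q,x}^{1}$ (case (ii)), and then the adjointness identities $B(xv,w)=B(v,xw)$ resp.\ $B(xv,w)=B(v,x^{-1}w)$ to upgrade $F$-linearity to $F_Q$-linearity. One small correction to your last paragraph: the hypothesis $\lambda\circ\mu=\lambda$ is not actually needed anywhere in this lemma --- the identification $W^{*}\cong W_F^{*}$, $\psi\mapsto\lambda\circ\psi$, is $F_Q$-linear for any nonzero $F$-linear $\lambda$, even in the self-paired case, since the statement only asserts an isomorphism of $F_Q$-modules and never refers to $\mu$; that compatibility condition on $\lambda$ only becomes relevant later, when one needs the sesquilinear forms induced on $V_{Q,x}^{i}$ to be genuinely symmetric or $\mu$-hermitian.
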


\subsubsection{Unitary group}
$ $\\
From now and till the end of the paper we fix a quadratic
extension $D$ of $F$ and denote by $\tau$ the involution that
fixes $F$.

\begin{notation}
Let $V$ be a hermitian space over $(D,\tau)$. Let $x\in GL(V)$ be
a semi-simple element and let $Q = \sum_{i=0}^{n} a_i \xi^i \in
D[\xi]$ (where $a_n \neq 0$) be an irreducible polynomial.
\itemize{
\item Denote $D_Q:= D[\xi]/Q$\\
\item Denote $$inv(Q):= \sum_{i=0}^n a_{n-i}\xi^i, \quad
Q^*:=\tau(inv(Q))$$
\item Denote $$V_{Q,x}^{00}:=Ker(Q(x)), \quad
V_{Q,x}^{01}:=Ker(Q^*(x)) , \quad V_{Q,x}^{10}:=Ker(inv(Q)(x)) ,
\quad V_{Q,x}^{11}:=Ker(\tau(Q)(x)).$$
We twist the action of $D$ on $V_{Q,x}^{i1}$ by $\tau$. We define
$D_Q$-linear space structure on $V_{Q,x}^{ij}$ by letting $\xi$
act on $V_{Q,x}^{0j}$ by $x$ and on $V_{Q,x}^{1j}$ by $x^{-1}$. We
will consider $V_{Q,x}^{ij}$ as linear spaces over $D_Q$.

\item If $Q$ is proportional to $Q^*$ we define an involution $\mu^{01}$ on $D_Q$ by
$\mu^{01}(P(\xi)):=\tau(P)(\xi^{-1})$.\\
If $Q$ is proportional to $inv(Q)$ we define an involution
$\mu^{10}$ on $D_Q$ by
$\mu^{10}(P(\xi)):=P(\xi^{-1})$.\\
If $Q$ is proportional to $\tau(Q)$ we define an involution
$\mu^{11}$ on $D_Q$ by $\mu^{11}(P(\xi)):=\tau(P)(\xi)$.

\item For a linear space $W$ over $D_Q$ we can consider its dual
space $W^*$ over $D_Q$ and the dual space of $W$ over $D$ which we
denote by $W_D^*$. The space $W_D^*$ has a canonical structure of
a linear space over $D_Q$. The spaces $W_D^*$ and $W^*$ can be
identified as linear spaces over $D_Q$. For this identification
one has to choose a $D$-linear functional $\lambda:D_Q \to D$. We
will fix such functional $\lambda$ such that
$$ \lambda(\mu^{ij}(d))=\tau^{j}(\lambda(d)) \quad \text{ if } \mu^{ij}
\text{ is defined}.$$
%
From now on we will identify $W_D^*$ and $W^*$. }
\end{notation}

The following two lemmas are straightforward.
\begin{lemma}
Let $V$ be a hermitian space over $(D,\tau)$. Let $x\in GL(V)$ and
let $P,Q \in D[\xi]$ be irreducible
polynomials. Suppose that either \\
(i) $x=x^*$ and $P$ is not proportional to $\tau(Q)$ or\\
(ii) $x \in U(V)$ and $P$ is not proportional to $Q^*$\\
Then $Ker(Q(x))$ is orthogonal to $Ker(P(x))$.
\end{lemma}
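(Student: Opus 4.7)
The plan is to adapt the standard Bezout-style orthogonality argument from linear algebra, taking care with the involution $\tau$ governing the hermitian form. Fix $v \in Ker(Q(x))$ and $w \in Ker(P(x))$; we want $B(v,w)=0$. The lever is the general relation
$$B(R(x)v,w) = B(v,\tau(R)(x^*)w),$$
valid for every polynomial $R = \sum a_i \xi^i \in D[\xi]$, where $\tau(R):=\sum \tau(a_i)\xi^i$. This follows at once from the adjoint property $B(x^i v,w)=B(v,(x^*)^i w)$ combined with the sesquilinearity of $B$.

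First I would apply this identity with $R=Q$, obtaining $B(v,\tau(Q)(x^*)w)=0$. In case (i), where $x^*=x$, this reads $B(v,\tau(Q)(x)w)=0$. Since $P$ and $\tau(Q)$ are both irreducible and non-proportional, they are coprime in $D[\xi]$, so Bezout produces $A,C\in D[\xi]$ with $AP+C\tau(Q)=1$. Both $P(x)$ and $\tau(Q)(x)$ preserve $Ker(P(x))$ (they are polynomials in $x$), and on that subspace $C(x)\tau(Q)(x)=\id$. Hence $\tau(Q)(x)$ maps $Ker(P(x))$ bijectively to itself, so every $w \in Ker(P(x))$ has the form $\tau(Q)(x)w'$ with $w' \in Ker(P(x))$, yielding $B(v,w)=B(v,\tau(Q)(x)w')=0$.

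For case (ii), $x^*=x^{-1}$ and a short calculation with $Q=\sum_{i=0}^n a_i \xi^i$ gives
$$\tau(Q)(x^{-1}) = x^{-n}\,inv(\tau(Q))(x) = x^{-n}Q^*(x),$$
matching the notational definitions stated just before the lemma. Thus $B(v,x^{-n}Q^*(x)w)=0$, and the coprimality hypothesis, now between $P$ and $Q^*$, lets the same Bezout argument show that $Q^*(x)$ is a bijection of $Ker(P(x))$; together with the bijectivity of $x^{-n}$ on that subspace, we may write $w = x^{-n}Q^*(x)w'$ with $w' \in Ker(P(x))$ and conclude $B(v,w)=0$.

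The only obstacle is the bookkeeping that tracks how $\tau$ interacts with the substitution $\xi \mapsto \xi^{-1}$ and with coefficient-reversal; this is exactly what the preceding notational definitions of $inv(\cdot)$ and $Q^*$ are designed to encode. Once these identifications are made, the proof is essentially routine.
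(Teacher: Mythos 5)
Your proof is correct; the paper offers no proof (it labels the lemma ``straightforward''), and your Bezout-plus-adjoint argument, including the identity $\tau(Q)(x^{-1})=x^{-\deg Q}Q^*(x)$ in the unitary case, is exactly the routine verification the authors intend. The only point worth a passing remark is the degenerate case $Q=\xi$ in case (ii), where $Q^*$ is a unit -- but there $Ker(Q(x))=Ker(x)=0$ since $x$ is invertible, so the claim is vacuous.
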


\begin{lemma}
Let $(V,B)$ be a hermitian space over $(D,\tau)$. Let $x\in GL(V)$
be a semi-simple element and let $Q \in D[\xi]$ be an
irreducible polynomial. Then\\
(i) If $x=x^*$ then $B$ defines a $D_Q$-linear isomorphism
$V_{Q,x}^{ij} \cong
(V_{Q,x}^{1-i,1-j})^*$.\\
(ii) If $x\in U(V)$ then $B$ defines a $D_Q$-linear isomorphism
$V_{Q,x}^{ij} \cong (V_{Q,x}^{i,1-j})^*$.
\end{lemma}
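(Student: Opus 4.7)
The plan is to deduce both isomorphisms from the non-degeneracy of $B$ on $V$, combined with the orthogonality assertion of the preceding lemma. The two parts run in parallel; only the identification of the pairing partner differs.

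First I would establish non-degeneracy of the restricted pairing. Because $x$ is semi-simple, the $D[x]$-module $V$ decomposes into the direct sum of its primary components indexed by irreducible polynomials in $\xi$, and each $V_{Q,x}^{ij}$ is a union of such components. The preceding orthogonality lemma implies that $B$ vanishes on $V_{Q,x}^{ij}\times V_{Q,x}^{i'j'}$ whenever $(i',j')$ is not the claimed partner: in case (i) the partner is $(1-i,1-j)$, characterised by the condition that its associated polynomial is proportional to $\tau(Q)$; in case (ii) the partner is $(i,1-j)$, characterised by proportionality to $Q^*$. Since $B$ is non-degenerate on the whole of $V$, the induced $D$-linear pairing of $V_{Q,x}^{ij}$ with its partner is non-degenerate, so $v\mapsto B(v,\,\cdot\,)$ yields a $D$-linear isomorphism onto the $D$-linear dual of the partner.

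The second step is to upgrade this isomorphism to one of $D_Q$-modules. The functional $\lambda\colon D_Q\to D$ was chosen precisely so that the compatibility $\lambda(\mu^{ij}(d))=\tau^j(\lambda(d))$ converts the $D$-linear identification $W^*_D\cong W^*$ into a $D_Q$-linear one. It then remains to verify that $v\mapsto B(v,\,\cdot\,)$ is $D_Q$-equivariant for the twisted $D_Q$-structures specified in the definitions ($D$-action twisted by $\tau^j$ and $\xi$ acting by $x^{\pm 1}$ according to $i$). A direct computation on a monomial $a\xi^n\in D_Q$, using the hermitian identities $B(dv,w)=dB(v,w)$ and $B(v,dw)=\tau(d)B(v,w)$, together with $B(xv,w)=B(v,xw)$ in case (i) or $B(xv,w)=B(v,x^{-1}w)$ in case (ii), shows that pulling $a\xi^n$ across $B$ produces the action on $w$ by $\mu^{1-i,1-j}(a\xi^n)$ in case (i) and by $\mu^{i,1-j}(a\xi^n)$ in case (ii). This matches precisely the $D_Q$-structure carried by the target dual.

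The only real obstacle is clerical: tracking how the index twists $(i,j)$ interact with the two involutions $\tau$ and $\mu^{\bullet\bullet}$. I would verify the base case $(i,j)=(0,0)$ in both parts in full, and then obtain the remaining three cases by combining the hermitian symmetry $B(w,v)=\tau(B(v,w))$ (which swaps the two factors together with their index pairs) with the substitution $x\leftrightarrow x^{-1}$ (which exchanges $i$ with $1-i$ while fixing $j$). That the authors label the lemma straightforward reflects that, once the bookkeeping of indices is laid out, nothing beyond these standard identities enters the argument.
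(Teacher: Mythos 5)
The paper itself gives no proof of this lemma --- it is declared straightforward alongside its companion --- so there is no argument of the authors to measure yours against; your proof is the intended one and is sound. The two steps you isolate are exactly right: the primary decomposition of $V$ under the semi-simple operator $x$, the preceding orthogonality lemma, and the non-degeneracy of $B$ on all of $V$ together force the restricted pairing between $V_{Q,x}^{ij}$ and its partner to be perfect, and the adjunction identity $B(xv,w)=B(v,xw)$ in case (i), respectively $B(xv,w)=B(v,x^{-1}w)$ in case (ii), combined with the normalisation of $\lambda$, upgrades the resulting $D$-linear duality to a $D_Q$-linear one. Three bookkeeping remarks. First, your characterisation of the partner of $V_{Q,x}^{ij}$ as the summand whose polynomial is proportional to $\tau(Q)$ (resp.\ $Q^*$) is literally correct only for $(i,j)=(0,0)$; in general it is $\tau$ (resp.\ $(\cdot)^*$) applied to the polynomial attached to $V_{Q,x}^{ij}$ itself --- harmless, since you verify $(0,0)$ in full and reach the other cases via the symmetries $B(w,v)=\tau(B(v,w))$ and $x\leftrightarrow x^{-1}$, both of which do permute the indices as you claim. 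Second, the involutions $\mu^{\bullet\bullet}$ are defined on $D_Q$ only when $Q$ is proportional to the corresponding transform of itself, so the clean formulation of your equivariance step is not that $a\xi^n$ crosses $B$ as $\mu^{i,1-j}(a\xi^n)$, but simply that $B(\alpha v,w)=B(v,\alpha w)$ for $\alpha\in D_Q$, each side taken in the $D_Q$-structure of its own factor; the computation is the same. Third, and most importantly, when you actually run that computation you will find that on the spaces $V_{Q,x}^{i1}$ the generator $\xi$ must act by $x^{(-1)^{i+1}}$, not by $x^{(-1)^{i}}$ as the paper's notation literally states (otherwise $Q$ does not annihilate $V_{Q,x}^{01}$ and $V_{Q,x}^{11}$ and these are not $D_Q$-modules at all); your sketch repeats the paper's convention ``$\xi$ acting by $x^{\pm1}$ according to $i$'', and with that literal reading the claimed match of $D_Q$-structures on the $j=1$ target would fail. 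This is a typo in the source, not a flaw in your strategy, but the base-case verification you defer is precisely where it must be caught.
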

\subsection{The pair $(GL(V),O(V))$}
\begin{theorem} \label{CompDesGL_O}
Let $V$ be a quadratic space over $F$. Then all the descendants of
the pair $(GL(V),O(V))$ are products of pairs of the type
$(GL(W),O(W))$ for some quadratic space $W$ over some field $F'$
that extends $F$.
\end{theorem}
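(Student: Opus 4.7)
The plan is to exploit the identification of the involution $\theta$ on $GL(V)$ to reduce the descendant calculation to the linear algebra of a semisimple symmetric element. For this pair $\theta(g) = (g^{t})^{-1}$, where the transpose is taken with respect to the fixed symmetric form $B$, so $\sigma(g) = g^{t}$ and $s(g) = g g^{t}$ satisfies $s(g)^{t} = s(g)$. If $g \in GL(V)(F)$ has closed $H$-double coset, then by Proposition \ref{PropDescend} the element $x := s(g)$ is also semisimple, so the descendant $(GL(V)_x, O(V)_x, \theta|_{GL(V)_x})$ is controlled entirely by the centralizer of a semisimple symmetric element together with the inherited involution.

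First I would split $V$ along the $F[x]$-isotypic components. Writing the minimal polynomial of $x$ as a product of distinct irreducibles $Q_1, \dots, Q_r \in F[\xi]$, we get the decomposition $V = \bigoplus_{i=1}^{r} V_{Q_i, x}^{0}$, and each summand $W_i := V_{Q_i, x}^{0}$ is naturally an $F_{Q_i}$-vector space. Case (i) of the first preparatory lemma gives $B$-orthogonality of the summands, and case (i) of the second gives an $F_{Q_i}$-isomorphism $W_i \cong W_i^{*}$. Using the fixed trace-like functional $\lambda \colon F_{Q_i} \to F$ I would repackage this as an $F_{Q_i}$-bilinear form $\tilde{B}_i$ on $W_i$ with $B|_{W_i} = \lambda \circ \tilde{B}_i$, and then verify --- from $B(xv,w) = B(v,xw)$, the symmetry of $B$, and the non-degeneracy of the trace pairing $(a,b) \mapsto \lambda(ab)$ on $F_{Q_i}$ --- that $\tilde{B}_i$ is symmetric and non-degenerate. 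This makes each $(W_i, \tilde{B}_i)$ a genuine quadratic space over the extension field $F' := F_{Q_i}$.

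Next I would read off the centralizers. An element of $GL(V)$ commutes with $x$ iff it preserves each isotypic summand and acts $F_{Q_i}$-linearly there, giving $GL(V)_x \cong \prod_{i=1}^{r} GL_{F_{Q_i}}(W_i)$. An element of $O(V)$ commuting with $x$ additionally preserves $B|_{W_i}$ for each $i$, and by the same non-degeneracy of the trace pairing this is equivalent to preserving $\tilde{B}_i$; hence $O(V)_x \cong \prod_{i=1}^{r} O_{F_{Q_i}}(W_i)$. Since $\theta$ manifestly respects the product decomposition and restricts on each factor to the involution cutting out $O_{F_{Q_i}}(W_i)$, the descendant is precisely the product of the symmetric pairs $(GL(W_i), O(W_i))$, each defined over its own field extension $F_{Q_i}/F$, as required.

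The step I expect to be the main obstacle is the transfer between the $F$-bilinear form $B|_{W_i}$ and the $F_{Q_i}$-bilinear form $\tilde{B}_i$ and the accompanying matching of automorphism groups. Both rely on choosing $\lambda$ so that the trace pairing on $F_{Q_i}$ is non-degenerate --- a standard fact for separable extensions --- together with the adjointness identity $B(P(x)v, w) = B(v, P(x)w)$ for $P \in F[\xi]$; beyond that the argument is routine bookkeeping of isotypic components.
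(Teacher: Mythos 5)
Your proposal is correct and follows essentially the same route as the paper: decompose $V$ into the orthogonal isotypic components $V_{Q_i,x}^{0}$ attached to the irreducible factors of the minimal polynomial of the semisimple symmetric element $x=s(g)$, endow each with a quadratic structure over $F_{Q_i}$ via the self-duality $V_{Q_i,x}^{0}\cong (V_{Q_i,x}^{0})^*$, and read off the centralizers factorwise. The only difference is that you spell out the transfer $B|_{W_i}=\lambda\circ\tilde B_i$ and the non-degeneracy/symmetry check, which the paper leaves implicit.
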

\begin{proof}
Note that in this case the anti-involution $\sigma$ is given by
$\sigma(x)=x^t$. Let $x\in GL(V)^{\sigma} $ be a semi-simple
element. Let $P$ be the minimal polynomial of $x$. We will now
discuss a special case and then deduce the general case from
it.\\\\
Case 1. $P$ is irreducible over $F$.\\
Clearly $GL(V)_x \cong GL(V_{P,x}^0)$. The isomorphism $V_{P,x}^0
\cong (V_{P,x}^0)^*$ gives a quadratic structure on $V_{P,x}^0$.
Now
 $O(V)_x \cong O(V_{P,x}^0)$.\\\\
Case 2. General case\\
Let $P=\prod_{i \in I}P_i$ be the decomposition of $P$ to
irreducible polynomials. Clearly $V = \bigoplus V_{P_i,x}^0$ and
$V_{P_i,x}^0$ are orthogonal to each other. Hence the pair
$(GL(V)_x,O(V)_x)$
 is a product of pairs from Case 1.
\end{proof}

\subsection{The pair $(GL(V),U(V))$}

\begin{theorem} \label{CompDesGL_U}
Let $(V,B)$ be a hermitian space over $(D,\tau)$. Then all the
descendants
of the pair $(GL(V),U(V))$ are products of pairs of the types\\
(i) $(GL(W) \times GL(W), \Delta GL(W))$ for some linear space $W$
over some field $D'$ that extends $D$\\
(ii)  $(GL(W),U(W))$ for some hermitian space $W$ over some
$(D',\tau')$ that extends $(D,\tau)$.
\end{theorem}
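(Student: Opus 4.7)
The plan is to follow the strategy used for Theorem \ref{CompDesGL_O}, adapted to the hermitian setting. For this pair the anti-involution is $\sigma(g) = g^*$ (the hermitian adjoint), so a semisimple element $x \in GL(V)^\sigma$ is a self-adjoint operator, $x = x^*$. Let $P \in D[\xi]$ be the minimal polynomial of $x$ and write $P = \prod_{i \in I} P_i$ as a product of distinct irreducible factors. By the orthogonality lemma in the hermitian case, $V_{P_i,x}^{00}$ is orthogonal to $V_{P_j,x}^{00}$ unless $P_j$ is proportional to $\tau(P_i)$. Thus the decomposition $V = \bigoplus_i V_{P_i,x}^{00}$ regroups into orthogonal blocks indexed by the $\tau$-orbits on the set of irreducible factors, and the centralizer pair $(GL(V)_x, U(V)_x)$ splits as a product over these orbits. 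Hence it suffices to analyze a single orbit.

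If the orbit is a singleton, i.e.\ $\tau(P_i)$ is proportional to $P_i$, the involution $\mu^{11}$ is defined on $D' := D_{P_i} = D[\xi]/P_i$, and by construction it restricts to $\tau$ on $D$, so $(D',\mu^{11})$ is an extension of $(D,\tau)$. In this case $V_{P_i,x}^{00} = V_{P_i,x}^{11}$ as a set (with the $D$-action twisted by $\tau$), and the duality isomorphism $V^{00} \cong (V^{11})^*$ coming from $B$ turns into a hermitian form over $(D',\mu^{11})$ on $W := V_{P_i,x}^{00}$. The block's contribution is $(GL_{D'}(W), U(W))$, a pair of type (ii). If instead the orbit is a doubleton $\{P_i, P_j\}$ with $P_j \propto \tau(P_i)$ and $P_j \not\propto P_i$, then $V_{P_i,x}^{00}$ is totally isotropic and paired non-degenerately with $V_{P_j,x}^{00}$ by $B$. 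The $GL$-centralizer on this block is $GL_{D_{P_i}}(V_{P_i,x}^{00}) \times GL_{D_{P_j}}(V_{P_j,x}^{00})$, and using $B$ to identify the second factor with $GL_{D_{P_i}}(V_{P_i,x}^{00})$ exhibits the unitary subgroup as the diagonal. Letting $W := V_{P_i,x}^{00}$ viewed over $D' := D_{P_i}$, the block's contribution is therefore $(GL(W) \times GL(W), \Delta GL(W))$, a pair of type (i).

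The expected obstacle is bookkeeping rather than ideas: one must carefully track the $\tau$-twisted $D_Q$-module structures on $V^{ij}_{Q,x}$ to confirm in Case A that the restricted form is genuinely hermitian with respect to $(D', \mu^{11})$, and in Case B that the two $GL$-factors are canonically anti-isomorphic via $B$ so that the unitary block is indeed the diagonal copy. Both verifications follow routinely from the orthogonality and duality lemmas for the hermitian case recorded above, in parallel with the computations in \cite{SpSt}.
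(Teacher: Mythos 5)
Your proposal is correct and follows essentially the same route as the paper's proof: reduce via the orthogonality lemma to $\tau$-orbits on the irreducible factors of the minimal polynomial, with singleton orbits ($\tau(P_i)\propto P_i$, the form descending to a $(D_{P_i},\mu^{11})$-hermitian structure) giving type (ii) and doubleton orbits ($P=Q\tau(Q)$, with $V^{00}_{Q,x}$ and $V^{11}_{Q,x}$ isotropic and dually paired) giving type (i). The only difference is presentational — the paper treats the two special cases first and then assembles the general case, while you decompose into orbits first — and the bookkeeping you flag is exactly what the paper's duality lemma for $x=x^*$ supplies.
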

\begin{proof}
Note that in this case the anti-involution $\sigma$ is given by
$\sigma(x)=x^*$. Let $x\in GL(V)^{\sigma} $ be a semi-simple
element. Let $P$ be the minimal polynomial of $x$. Note that
$\tau(P)$ is proportional to $P$. We will now discuss 2 special
cases and then deduce the general case from
them.\\\\
Case 1. $P=Q\tau(Q)$ where $Q$ is irreducible over $D$.\\
Clearly $GL(V)_x \cong GL(V_{Q,x}^{00}) \times GL(V_{Q,x}^{11})$.
Recall that $B$ gives a non-degenerate pairing between
$V_{Q,x}^{00}$ and $V_{Q,x}^{11}$, and the spaces $V_{Q,x}^{ii}$
are isotropic. Therefore
$$GL(V_{Q,x}^{00})\cong GL(V_{Q,x}^{11}), \quad GL(V)_x \cong
GL(V_{Q,x}^{00})^2 \text{ and } U(V)_x \cong \Delta GL(V_{Q,x}^{00}) < GL(V_{Q,x}^{00})^2. $$\\
Case 2. $P$ is irreducible over $D$.\\
Clearly $GL(V)_x \cong GL(V_{P,x}^{00})$ and $V_{P,x}^{00}$ is
identical to $V_{P,x}^{11}$ as $F$-linear spaces but the actions
of $D_P$ differ by a twist by $\mu^{11}$. Hence the isomorphism
$V_{P,x}^{00} \cong (V_{P,x}^{11})^*$ gives a hermitian structure
on $V_{P,x}^{00}$ over $(D_P,\mu^{11})$. Now
$U(V)_x \cong U(V_{P,x}^{00}) < GL(V_{P,x}^{00})$.\\\\
Case 3. General case\\
Let $P=\prod_{i \in I}P_i$ be the decomposition of $P$ to
irreducible polynomials. Then $\tau(P_i)$ is proportional to
$P_{s(i)}$ where $s$ is some permutation of $I$ of order $\leq$ 2.
Let $I = \bigsqcup I_{\alpha}$ be the decomposition of $I$ to
orbits of $s$. Denote $V_{\alpha}:=Ker (\prod _{i \in \alpha}
P_i(x))$. Clearly $V = \bigoplus V_{\alpha}$ and $V_{\alpha}$ are
orthogonal to each other. Hence the pair $(GL(V)_x,U(V)_x)$ is a
product of pairs from the first 2 cases.
\end{proof}

\subsection{The pair $(U(V_D),O(V))$}
\begin{theorem} \label{CompDesU_O}
Let $(V,B)$ be a quadratic space over $F$. Let $V_D:=V \otimes_F
D$ be its extension of scalars with the corresponding hermitian
structure.

Then all the descendants of the pair $(U(V_D),O(V))$ are products
of pairs of the types\\
(i) $(GL(W),O(W))$ for some quadratic space $W$ over some field
$F'$ that extends $F$.\\
(ii) $(U(W_{D'}),O(W))$ for some extension $(D',\tau')$ of
$(D,\tau)$ and some quadratic space $W$ over $D'^{\tau'}$.
\end{theorem}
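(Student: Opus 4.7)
The plan is to follow the template of Theorems \ref{CompDesGL_O} and \ref{CompDesGL_U}, now combining the hermitian structure of $V_D$ with the quadratic structure of $V$. First I identify the anti-involution: fixing an $F$-basis of $V$ (which is simultaneously a $D$-basis of $V_D$) and letting $A$ be the Gram matrix of $B$, the involution $\theta$ with fixed points $O(V)$ is the entry-wise Galois action $g \mapsto \tau(g)$, so $\sigma(g) = \tau(g^{-1})$. For $g \in U(V_D)$, using $g^{-1} = g^* = A^{-1}\tau(g)^t A$ and $\tau(A) = A$, this simplifies to $\sigma(g) = A^{-1} g^t A$; that is, $\sigma$ is transposition with respect to the $D$-bilinear extension of $B$ to $V_D$.

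Next, let $x \in U(V_D)^\sigma$ be semisimple with minimal polynomial $P \in D[\xi]$. Combining $x^* = x^{-1}$ with $\tau(x) = x^{-1}$ (which is what $\sigma(x) = x$ becomes, using $\tau(A) = A$), one obtains $\tau(x) = x^* = x^{-1}$. Comparing minimal polynomials forces $P \sim P^*$, where $P^* := \tau(\mathrm{inv}(P))$. Factor $P = \prod_i P_i$ into irreducibles over $D$; the involution $Q \mapsto Q^*$ permutes the $P_i$ into orbits of sizes one or two. The hermitian orthogonality lemma implies that the subspaces $V_\alpha := \bigoplus_{i \in \alpha} V_{P_i, x}^{00}$ indexed by orbits $\alpha$ are pairwise $B_D$-orthogonal; and the identity $\tau(V_{P_i, x}^{00}) = V_{P_i^*, x}^{00}$ shows each $V_\alpha$ is $\tau$-stable. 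Hence the descendant factors as a product over orbits and it suffices to analyze each piece.

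For an orbit $\alpha = \{P_i, P_i^*\}$ of size two, $B_D$ pairs the two totally isotropic summands $V_{P_i, x}^{00}$ and $V_{P_i^*, x}^{00}$ non-degenerately, so $U(V_D)_x|_{V_\alpha} \cong GL_{D_{P_i}}(V_{P_i, x}^{00})$ and $\tau$ swaps the two summands. The composition of $\tau$ with the hermitian duality endows $V_{P_i, x}^{00}$ with a non-degenerate symmetric bilinear form, making it a quadratic space $W_\alpha$ over the field $F' := D_{P_i}$ (which extends $F$), and the $\tau$-fixed subgroup of $GL_{D_{P_i}}(V_{P_i, x}^{00})$ is precisely $O(W_\alpha)$. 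This yields a descendant of type (i). For an orbit $\alpha = \{P_i\}$ of size one (so $P_i \sim P_i^*$), $B_D$ restricts to a non-degenerate hermitian form on $V_{P_i, x}^{00}$ over $(D_{P_i}, \mu^{01})$, so $U(V_D)_x|_{V_\alpha} \cong U(V_{P_i, x}^{00})$. The involution $\tau$ on $V_{P_i, x}^{00}$ is $\mu^{01}$-semilinear over $D_{P_i}$, and Galois descent identifies $V_{P_i, x}^{00}$ with $(W_\alpha)_{D'}$ for the quadratic space $W_\alpha := V_{P_i, x}^{00} \cap V$ over $F' := (D_{P_i})^{\mu^{01}}$, with $D' := D_{P_i}$ extending $D$ and $\tau' := \mu^{01}$ extending $\tau$. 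This yields a descendant of type (ii).

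The main technical obstacle is the bookkeeping of fields and involutions in the orbit-of-size-two case, where further sub-cases arise depending on whether $P_i$ is $F$-rational and/or reciprocal; in each sub-case one must verify that the induced symmetric bilinear form on $V_{P_i, x}^{00}$ is non-degenerate and that the $\tau$-interaction with the hermitian duality correctly identifies the fixed subgroup as the orthogonal group of a quadratic space over the asserted field $F'$. This parallels the analysis in the proof of Theorem \ref{CompDesGL_U} and uses only the structural lemmas for unitary groups stated earlier in this section.
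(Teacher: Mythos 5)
Your proposal is correct and follows essentially the same route as the paper: identify $\sigma$ as $B_D$-transposition, derive $P\sim P^*$, decompose $V_D$ orthogonally according to the orbits of $Q\mapsto Q^*$ on the irreducible factors, and handle size-two orbits by composing $\widetilde{\tau}$ with the hermitian duality to get a quadratic space over $D_Q$ (type (i)) and size-one orbits by Galois descent along the $\mu^{01}$-semilinear involution $\widetilde{\tau}$ (type (ii)). The only difference is cosmetic bookkeeping ($V_{P_i^*,x}^{00}$ versus the paper's twisted $V_{P_i,x}^{01}$), and the extra sub-cases you worry about at the end do not actually arise here.
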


For the proof of this theorem we will need the following notation
and lemma.

\begin{notation}
Let $(V,B)$ be a quadratic space over $F$. The involution $\tau$
defines an involution $\widetilde{\tau}$ on $V_D$. The form $B$
defines a quadratic form $B_D$ on $V_D$ and a hermitian form
$B_D^{\tau}$ on $V_D$.
\end{notation}


\begin{lemma}
Let $(V,B)$ be a quadratic space over $F$. Let $P$ be an
irreducible polynomial. Let $x \in U(V_D)$ be a semi-simple
element such that $x = x^t$ (where $x^t$ is defined by $B_D$).
Then the involution $\widetilde{\tau}$ gives a $D_{P}$-linear
isomorphism $V_{P,x}^{ij} \cong V_{P,x}^{i,1-j}$.
\end{lemma}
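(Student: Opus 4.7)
The plan is to derive a single commutation relation between $\widetilde{\tau}$ and $x$ from the two hypotheses, and then read off the behavior of $\widetilde{\tau}$ on each $V_{P,x}^{ij}$ from it. The key identity to establish first is as follows: since $B_D^{\tau}(v,w) = B_D(v,\widetilde{\tau}(w))$ by construction of the hermitian extension of $B$, the transpose (with respect to $B_D$) and the hermitian adjoint (with respect to $B_D^{\tau}$) are related by $x^t = \widetilde{\tau}\, x^{*}\, \widetilde{\tau}$. The hypothesis $x \in U(V_D)$ gives $x^{*} = x^{-1}$, and the hypothesis $x = x^t$ then yields $\widetilde{\tau}\, x\, \widetilde{\tau} = x^{-1}$, i.e.\ conjugation by $\widetilde{\tau}$ inverts $x$.

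Given this relation, for $v \in V_{P,x}^{00} = \ker P(x)$ one applies $\widetilde{\tau}$ to the equation $P(x)v = 0$. Using that $\widetilde{\tau}$ is $\tau$-semilinear over $D$ and that $\widetilde{\tau}\, x^k = x^{-k}\, \widetilde{\tau}$ for every $k$, the equation becomes $\tau(P)(x^{-1})\, \widetilde{\tau}(v) = 0$. Multiplying by $x^{\deg P}$ and invoking the identity $\tau(P)(\xi^{-1})\, \xi^{\deg P} = \tau(\mathrm{inv}(P))(\xi) = P^{*}(\xi)$ in $D[\xi,\xi^{-1}]$, one concludes $P^{*}(x)\, \widetilde{\tau}(v) = 0$, so $\widetilde{\tau}(v) \in V_{P,x}^{01}$. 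Running the same argument with $P$ replaced in turn by $\mathrm{inv}(P)$, $\tau(P)$, and $P^{*}$ establishes the inclusion $\widetilde{\tau}(V_{P,x}^{ij}) \subset V_{P,x}^{i,1-j}$ for every $(i,j)$; the four parallel computations rest on the commutativity of $\tau$ and $\mathrm{inv}$ on polynomials.

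To finish, one verifies the $D_P$-linearity and surjectivity. The $\tau$-semilinearity of $\widetilde{\tau}$ over $D$ matches exactly the $\tau$-twist that distinguishes the $D$-action on $V_{P,x}^{i,1-j}$ from the one on $V_{P,x}^{ij}$, and the compatibility with the action of $\xi$ is forced by the commutation relation $\widetilde{\tau}\, x\, \widetilde{\tau} = x^{-1}$ together with the conventions defining the $D_P$-module structures on the source and target. Bijectivity is automatic from $\widetilde{\tau}^2 = \mathrm{id}$. The only real obstacle anticipated is the bookkeeping of the polynomial identities interrelating $P$, $\mathrm{inv}(P)$, $\tau(P)$, and $P^{*}$ under the inversion $\xi \leftrightarrow \xi^{-1}$; the genuine content of the lemma is concentrated in the single commutation identity derived in the first step.
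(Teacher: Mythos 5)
Your proof is correct and follows essentially the same route as the paper's: both derive $\widetilde{\tau}\,x\,\widetilde{\tau}=x^{-1}$ from $x=x^t$ and $x^*=x^{-1}$, push $P(x)v=0$ through $\widetilde{\tau}$ using $\mathrm{inv}(P)(x^{-1})=x^{-\deg P}P(x)$ to land in $\ker P^*(x)=V_{P,x}^{01}$, and handle the remaining indices by the symmetric substitutions. You are in fact slightly more explicit than the paper (which records only the $V^{00}$ computation and says the rest is similar) about the $D_P$-linearity, the bijectivity via $\widetilde{\tau}^2=\mathrm{id}$, and the four-fold case check.
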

\begin{proof}
We will show that $\widetilde{\tau}$ maps $V_{P,x}^{00}$ to
$V_{P,x}^{11}$, and the other cases are done similarly. Let $v \in
V_{P,x}^{ij}$. We have
$$ P^*(x)(\widetilde{\tau}(v))=
\widetilde{\tau}(inv(P)(x^*)(v))=
\widetilde{\tau}(inv(P)(x^{-1})(v))=\widetilde{\tau}(x^{-degP}P(x)(v))=0.$$
\end{proof}

\begin{proof}[Proof of Theorem \ref{CompDesU_O}]
Note that in this case the anti-involution $\sigma$ is given by
$\sigma(x)=x^t$. Let $x\in U(V_D)^{\sigma} $ be a semi-simple
element. Let $P$ be the minimal polynomial of $x$. Then $P$ is
proportional to $P^*$. We will now discuss 2 special cases and
then deduce the general case from
them.\\\\
Case 1. $P=QQ^*$ where $Q$ is irreducible over $D$.\\
Clearly $GL(V)_x \cong GL(V_{Q,x}^{00}) \times GL(V_{Q,x}^{01})$.
Recall that $B_D^{\tau}$ gives a non-degenerate pairing between
$V_{Q,x}^{00}$ and $V_{Q,x}^{01}$, and the spaces $V_{Q,x}^{0i}$
are isotropic. Therefore $$GL(V_{Q,x}^{00})\cong GL(V_{Q,x}^{01}),
\quad GL(V)_x \cong GL(V_{Q,x}^{00})^2, \quad  U(V)_x \cong \Delta
GL(V_{Q,x}^{00}) < GL(V_{Q,x}^{00})^2$$
Compose the isomorphism $V_{Q,x}^{00} \cong V_{Q,x}^{01}$ given by
$\widetilde{\tau}$ with the isomorphism $V_{Q,x}^{01} \cong
(V_{Q,x}^{00})^*$ given by $B_D^{\tau}$. This gives a quadratic
structure on $V_{Q,x}^{00}$. Now
$$O(V)_x \cong
\Delta O(V_{Q,x}^{00}) < \Delta GL(V_{Q,x}^{00}). $$\\
Case 2. $P$ is irreducible over $D$.\\
Clearly $GL(V)_x \cong GL(V_{P,x}^{00})$ and $V_{P,x}^{00}$ is
identical to $V_{P,x}^{01}$ as $F$-linear spaces but the actions
of $D_P$ on them differ by a twist by $\mu^{01}$. Hence the
isomorphism $V_{P,x}^{00} \cong (V_{P,x}^{01})^*$ given by
$B_D^{\tau}$ gives a hermitian structure on $V_{P,x}^{00}$ over
$(D_P,\mu^{01})$ and the isomorphism $V_{P,x}^{00} \cong
V_{P,x}^{01}$ given by $\widetilde{\tau}$ gives an antilinear
involution of $V_{P,x}^{00}$. Now
$$U(V)_x \cong U(V_{P,x}^{00}) < GL(V_{P,x}^{00}) \text{ and }O(V)_x \cong O(V_{P,x}^{00}) < U(V_{P,x}^{00})
.$$\\
Case 3. General case\\
Let $P=\prod_{i \in I}P_i$ be the decomposition of $P$ to
irreducible polynomials. Then $P_i^*$ is proportional to
$P_{s(i)}$ where $s$ is some permutation of $I$ of order $\leq$ 2.
Let $I = \bigsqcup I_{\alpha}$ be the decomposition of $I$ to
orbits of $s$. Denote $V_{\alpha}:=Ker (\prod _{i \in \alpha}
P_i(x))$. Clearly $V_D = \bigoplus V_{\alpha}$, $V_{\alpha}$ are
orthogonal to each other and each $V_{\alpha}$ is invariant with
respect to $\widetilde{\tau}$. Hence the pair $(GL(V)_x,U(V)_x)$
is a product of pairs from the first 2 cases.
\end{proof}

\subsection{The pair $(O(V_1 \oplus V_2), O(V_1) \times O(V_2))$}

\begin{theorem} \label{CompDesO_OtO}
Let $(V,B)$ be a quadratic space over $F$.\\
Let $\eps \in O(V)$ be an element of order 2. Then all the
descendants of the pair $(O(V),O(V)_{\eps})$ are products of pairs
of the types\\
(i) $(GL(W),O(W))$ for some quadratic space $W$ over some field
$F'$ that extends $F$\\
(ii) $(U(W_E) , O(W))$ for some quadratic space $W$ over some
field $F'$ that extends $F$, and some quadratic extension $E$ of
$F'$.\\
(iii) $(O(W),O(W)_{\eps})$ for some quadratic space $W$ over $F$.
\end{theorem}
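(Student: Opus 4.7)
The plan is to imitate the proofs of Theorems \ref{CompDesGL_O}--\ref{CompDesU_O}, with the new ingredient that besides the compatibility of $x$ with the quadratic form $B$ we must also track how $\eps$ interacts with the spectral decomposition of $V$ under $x$. The anti-involution of the pair here is $\sigma(g) = \eps g^{-1} \eps$. First I would take a semisimple $x \in O(V)^\sigma$ and consider its minimal polynomial $P$. The condition $x \in O(V)$ forces $P \propto inv(P)$, while $\sigma(x) = x$ reads $\eps x \eps = x^{-1}$, so that $\eps$ intertwines $x$ with $x^{-1}$.

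Next, I would factor $P = \prod_i P_i$ into irreducibles over $F$ and group the factors into orbits $I_\alpha$ of the involution $P_i \mapsto inv(P_i)$, setting $V_\alpha := Ker(\prod_{i \in I_\alpha} P_i(x))$. The decomposition $V = \bigoplus_\alpha V_\alpha$ is $B$-orthogonal by the standard lemma recalled in the preliminaries, and in addition each $V_\alpha$ is $\eps$-invariant: since $\eps$ conjugates $x$ to $x^{-1}$, it carries $Ker(P_i(x))$ onto $Ker(inv(P_i)(x))$, and both of these are summands of $V_\alpha$. Consequently $(O(V)_x, O(V)_{\eps,x})$ factors as a direct product over $\alpha$, and the computation reduces to three local cases according to the shape of $I_\alpha$.

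If $I_\alpha = \{\xi \pm 1\}$, then $x$ acts as $\pm 1$ on $V_\alpha$, the centralizer is $O(V_\alpha)$, and the block is $(O(V_\alpha), O(V_\alpha)_\eps)$, which is type (iii). If $I_\alpha = \{Q, inv(Q)\}$ with $Q \not\propto inv(Q)$, then $V_\alpha = V^0 \oplus V^1$ with $V^i := V_{Q,x}^i$; the form $B$ pairs $V^0$ with $V^1$ non-degenerately and vanishes on each, so $O(V_\alpha)_x \cong GL(V^0)$, and $\eps$ swaps $V^0$ and $V^1$ $F_Q$-linearly (the $F_Q$-linearity comes from $\eps x = x^{-1}\eps$ and the convention $\xi|_{V^1} = x^{-1}$). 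I would then introduce $b(v,w) := B(v, \eps w)$ on $V^0$ and verify its symmetry through
$$B(v, \eps w) = B(\eps(\eps w), \eps v) = B(w, \eps v),$$
using only $\eps \in O(V)$, $\eps^2 = 1$, and the symmetry of $B$. A direct check shows that $g_0 \in GL(V^0)$ commutes with $\eps$ if and only if it preserves $b$, so the block is $(GL(W), O(W))$ with $W := (V^0, b)$ a quadratic space over $F_Q$, giving type (i).

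Finally, if $I_\alpha = \{P_\alpha\}$ with $P_\alpha$ irreducible, self-$inv$, and different from $\xi \pm 1$, then the involution $\mu$ on $F_{P_\alpha}$ is non-trivial; setting $F' := F_{P_\alpha}^\mu$ and $E := F_{P_\alpha}$, the form $B$ endows $V_\alpha$ with a hermitian structure over $(E, \mu)$ and yields $O(V_\alpha)_x \cong U_E(V_\alpha)$. The relation $\eps x = x^{-1} \eps$ now says $\eps$ is $\mu$-antilinear on $V_\alpha$, and $\eps \in O(V)$ says $\eps$ preserves the hermitian form in the sesquilinear sense. A standard Galois descent argument then produces a quadratic space $W := V_\alpha^\eps$ over $F'$ with $V_\alpha = W \otimes_{F'} E$, the hermitian form being the scalar extension of the quadratic form on $W$; the $\eps$-fixed points of $U_E(V_\alpha)$ are exactly $O(W)$, so the block is $(U(W_E), O(W))$, i.e.\ type (ii). The main obstacle will be the $\eps$-equivariance bookkeeping in the last two cases: confirming that $b$ is genuinely symmetric and $F_Q$-valued through the chosen functional $\lambda$, and that the $\mu$-antilinear involution $\eps$ really produces a quadratic space $W$ over $F'$ whose extension of scalars recovers both $V_\alpha$ and its hermitian form.
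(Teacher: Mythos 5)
Your proposal follows essentially the same route as the paper's own proof: the same orthogonal, $\eps$-invariant decomposition of $V$ along the orbits of $P_i\mapsto inv(P_i)$, and the same three local cases (a swapped pair $\{Q,inv(Q)\}$ giving $(GL(W),O(W))$, a self-$inv$ irreducible factor with $x\neq x^{-1}$ giving $(U(W_E),O(W))$ via the hermitian structure over $(F_P,\mu)$ and the $\mu$-antilinear involution $\eps$, and $\xi\pm1$ giving $(O(W),O(W)_{\eps})$). The only differences are presentational, e.g.\ writing the quadratic form as $b(v,w)=B(v,\eps w)$ instead of composing the isomorphisms $V^0_{Q,x}\cong V^1_{Q,x}\cong (V^0_{Q,x})^*$, so the argument is correct as it stands.
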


For the proof of this theorem we will need the following
straightforward lemma.

\begin{lemma}
Let $(V,B)$ be a quadratic space over $F$.\\
Let $\eps \in O(V)$ be an element of order 2. Let $x \in O(V)$
such that $\eps x \eps = x^{-1}$. Let $Q$ be an irreducible
polynomial. Then $\eps$ gives an $F_Q$- linear isomorphism
$V_{Q,x}^i \cong V_{Q,x}^{1-i}$.
\end{lemma}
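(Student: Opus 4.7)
The plan is to unpack the $F_Q$-linear structure on $V_{Q,x}^{0}$ and $V_{Q,x}^{1}$ and then track what the anti-commutation relation $\eps x \eps = x^{-1}$ does at the level of polynomials in $x$.

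The first step will be to exploit $\eps^{2}=1$, which gives $\eps x = x^{-1}\eps$. Iterating this (and using that $x \in O(V)$ is invertible), we obtain $\eps x^{k} = x^{-k} \eps$ for every integer $k$, and hence $\eps P(x) = P(x^{-1})\eps$ for every polynomial $P \in F[\xi]$. This is the only nontrivial identity I need.

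Next, I would record the elementary reshuffling $Q(x^{-1}) = x^{-\deg Q}\, \mathrm{inv}(Q)(x)$, which is a direct consequence of the definition $\mathrm{inv}(Q)(\xi)=\sum_{i=0}^{n} a_{n-i}\xi^{i}$. Since $x$ is invertible, multiplying by $x^{-\deg Q}$ does not affect the kernel, so $\ker Q(x^{-1}) = \ker \mathrm{inv}(Q)(x) = V_{Q,x}^{1}$. Applied to $v \in V_{Q,x}^{0}$, this gives $\mathrm{inv}(Q)(x)\,\eps v = $ (scalar) $\cdot\, Q(x^{-1})\eps v = $ (scalar) $\cdot\, \eps Q(x)v = 0$, so $\eps(V_{Q,x}^{0}) \subset V_{Q,x}^{1}$. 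The symmetric argument (with the roles of $Q$ and $\mathrm{inv}(Q)$ swapped) gives $\eps(V_{Q,x}^{1}) \subset V_{Q,x}^{0}$, and since $\eps^{2}=1$ these inclusions are inverse bijections.

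Finally, I would verify $F_Q$-linearity. Recall that on $V_{Q,x}^{0}$ the element $\xi \in F_Q$ acts by $x$, while on $V_{Q,x}^{1}$ it acts by $x^{-1}$. For $v \in V_{Q,x}^{0}$, compute
\[
\eps(\xi \cdot v) = \eps(xv) = x^{-1}\eps v = \xi \cdot (\eps v),
\]
where the last $\xi$-action takes place in $V_{Q,x}^{1}$, so it really is multiplication by $x^{-1}$. This shows $\eps$ intertwines the $F[\xi]$-actions, hence descends to an $F_Q$-linear map, completing the proof of the case $i=0$; the case $i=1$ is identical after swapping $Q$ and $\mathrm{inv}(Q)$. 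There is no real obstacle here; the only subtlety is to keep track of the fact that the $F_Q$-module structures on $V_{Q,x}^{0}$ and $V_{Q,x}^{1}$ are defined via $x$ and $x^{-1}$ respectively, which is exactly why the relation $\eps x \eps = x^{-1}$ makes $\eps$ come out $F_Q$-linear rather than merely $F$-linear.
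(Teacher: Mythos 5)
Your proof is correct. The paper itself gives no proof of this lemma (it is declared ``straightforward''), but your argument --- the intertwining relation $\eps P(x)=P(x^{-1})\eps$ combined with the identity $Q(x^{-1})=x^{-\deg Q}\,\mathrm{inv}(Q)(x)$ --- is exactly the computation the authors use for the analogous lemma about $\widetilde{\tau}$ and $V_{P,x}^{ij}\cong V_{P,x}^{i,1-j}$ in the unitary case, so you are taking essentially the same route; your added check of $F_Q$-linearity of $\eps$ (which the paper leaves implicit) is a welcome bonus.
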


\begin{proof}[Proof of Theorem \ref{CompDesO_OtO}]
Note that the involution $\sigma$ on $O(V)$ is given by $x \mapsto
\eps x^{-1}\eps$. Let $x\in O(V)^{\sigma}$ be a semi-simple
element and let $P$ be its minimal polynomial.

Note that the minimal polynomial of $x^{-1}$ is $inv(P)$ and hence
$P$ is proportional to $inv(P)$. We will now discuss 3 special
cases and then deduce the general case from
them.\\\\
Case 1. $P=Qinv(Q)$, where $Q$ is an irreducible polynomial.\\
Note that $GL(V)_x \cong \prod _{i} GL(V_{Q,x}^{i})$.

Since $B$ defines a non-degenerate pairing $V_{Q,x}^{0} \cong
(V_{Q,x}^{1})^*$, and $V_{Q,x}^{i}$ are isotropic, we have
$$O(V)_x \cong \Delta GL(V_{Q,x}^{0}) < GL(V_{Q,x}^{0})^2.$$
Now, compose the isomorphism $V_{Q,x}^{i} \cong V_{Q,x}^{1-i}$
given by $\eps$ with the isomorphism $V_{Q,x}^{1-i} \cong
(V_{Q,x}^{i})^*$. This gives a quadratic structure on
$V_{Q,x}^{0}$. Clearly, $\eps$ gives an isomorphism $V_{Q,x}^{0}
\cong V_{Q,x}^{1}$ as quadratic spaces and hence
$$(O(V)_{\eps})_x \cong  \Delta O(V_{Q,x}^{0}) < \Delta
GL(V_{Q,x}^{0}).$$\\
Case 2. $P$ is irreducible and $x \neq x^{-1}$\\
In this case $GL(V)_x \cong GL(V_{P,x}^{0})$. Also, $V_{P,x}^{0}$
and $V_{P,x}^{1}$ are identical as $F$-vector spaces but the
action of $F_P$ on them differs by a twist by $\mu$. Therefore the
isomorphism $V_{P,x}^{0} \cong (V_{P,x}^{1})^*$ gives a hermitian
structure on $V_{P,x}^{0}$ over $(F_P,\mu)$ and $\eps$ gives an
$(F_P,\mu)$-antilinear automorphism of $V_{P,x}^{0}$. Now
$$O(V)_x \cong U(V_{P,x}^{0}).$$
Denote $W:= (V_{P,x}^{0})^{\eps}$. It is a linear space over
$(F_P)^{\mu}$. It has a quadratic structure. Now
$$(O(V)_{\eps})_x \cong O(W) < U(V_{P,x}^{0}).$$\\
Case 3. $P$ is irreducible and $x = x^{-1}$.\\
Again, $GL(V)_x \cong GL(V_{P,x}^{0})$. However, in this case
$F_P=F$ and $V_{P,x}^{0}=V$. Also $O(V)_x \cong O(V_{P,x}^{0}).$
Now, $\eps$ commutes with $x$ and hence $\eps \in O(V)_x \cong
O(V_{P,x}^{0})$. Hence
$$(O(V)_{\eps})_x \cong (O(V_{P,x}^{0}))_{\eps} < O(V_{P,x}^{0}).$$\\
Case 4. General case\\
Let $P = \prod_{i \in I} P_i$ be the decomposition of $P$ to
irreducible multiples. Since $P$ is proportional to $inv(P)$,
every $P_i$ is proportional to $P_{s(i)}$ where $s$ is some
permutation of $I$ of order $\leq 2$.

Let $I = \bigsqcup I_{\alpha}$ be the decomposition of $I$ to
orbits of $s$. Denote $V_{\alpha}:=Ker(\prod_{i \in \alpha}
P_i(x))$. Clearly $V = \bigoplus V_{\alpha}$ and $V_{\alpha}$ are
orthogonal to each other and $\eps$-invariant. Hence the pair
$(O(V)_x,(O(V)_{\eps})_x)$ is a product of pairs from the first 3
cases.

\end{proof}

\subsection{The pair $(U(V_1 \oplus V_2), U(V_1) \times U(V_2))$}


\begin{theorem} \label{CompDesU_UtU}
Let $(V,B)$ be a hermitian space over $(D,\tau)$.\\
Let $\eps \in U(V)$ be an element of order 2. Then all the
descendants of the pair $(U(V),U(V)_{\eps})$ are products of pairs
of the types\\
(i) $(GL(W) \times GL(W), \Delta GL(W))$ for some linear space $W$
over some field $D'$ that extends $D$\\
(ii) $(U(W) \times U(W), \Delta U(W))$ for some hermitian space
$W$ over some extension $(D',\tau')$ of $(D,\tau)$\\
(iii) $(GL(W),U(W))$ for some hermitian space $W$ over some
extension $(D',\tau')$ of $(D,\tau)$\\
(iv) $(GL(W_{D'}), GL(W))$ where $F'$ is a field extension of $D$,
$D'/F'$ is a quadratic extension, $W$ is a linear space over $F'$
and $W_{D'}:= W \otimes_{F'} D'$ is its extension of scalars to
$D'$ \\
(v) $(GL(W),GL(W)_{\eps})$ where $W$ is a linear space over $D$
and $\eps \in GL(W)$ is an element of order $\leq 2$.\\
(vi) $(U(W_E) , U(W))$ where $W$ is a hermitian space over some
extension $(D',\tau')$ of $(D,\tau)$, $(E,\tau'')$ is some
quadratic extension of $(D',\tau')$ and $W_E=W \otimes_{D'}E$ is
an extension of scalars with the corresponding hermitian
structure.\\
(vii) $(U(W),U(W)_{\eps})$ where $W$ is a hermitian space over
$(D,\tau)$.
\end{theorem}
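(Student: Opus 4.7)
The plan is to follow the template of Theorems \ref{CompDesO_OtO}, \ref{CompDesGL_U}, and \ref{CompDesU_O}. Here the anti-involution is $\sigma(x) = \eps x^{-1} \eps$, so a semisimple $x \in U(V)^\sigma$ satisfies both $x^* = x^{-1}$ (from $x \in U(V)$) and $\eps x \eps = x^{-1}$ (from $\sigma(x) = x$). Its minimal polynomial $P$ over $D$ therefore satisfies $P \sim inv(P) \sim \tau(P) \sim P^*$ (up to scalars). Writing $P = \prod_i P_i$ as a product of distinct irreducibles, the group $\langle inv, \tau \rangle \cong \Z/2\Z \times \Z/2\Z$ acts on the set of factors, and $\eps$ may further permute these orbits pairwise. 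The generalized eigenspace decomposition of $V$ respects both actions, so the descendant decomposes as a product over combined $(\eps, \langle inv, \tau \rangle)$-orbits, and it suffices to analyze one orbit at a time.

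I would organize the analysis by the five possible types of $\langle inv, \tau \rangle$-orbits on the factors of $P$: the singleton orbit where $Q \sim inv(Q) \sim \tau(Q) \sim Q^*$; three two-element orbits distinguished by which of $inv$, $\tau$, or the composition $*$ is the stabilizing involution; and four-element orbits with trivial stabilizer. In each case, $B$ induces an isomorphism $V_{Q,x}^{ij} \cong (V_{Q,x}^{i,1-j})^*$, and combined with the involutions $\mu^{ij}$ of $D_Q$ coming from any coincidences $Q \sim inv(Q)$, $Q \sim \tau(Q)$, $Q \sim Q^*$, this endows a single representative vector space $W$ over a suitable extension of $D$ with the appropriate linear, hermitian, or bi-hermitian structure. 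This identifies $U(V)_x$ with a group of type $GL(W)$, $U(W)$, $GL(W) \times GL(W)$, $U(W) \times U(W)$, or $GL(W_{D'})$ on the relevant piece. Then passing to $U(V)_{\eps,x}$, the involution $\eps$ either acts within each block or swaps blocks pairwise, and each of the resulting sub-cases identifies the symmetric pair with one of the seven types (i)--(vii). The possibility of $\eps$ acting as a genuine order-$2$ automorphism within a $GL(W)$ block (needed for type (v)) comes from singleton orbits where the $\mu^{ij}$ force the centralizer to be a $GL$ rather than a $U$.

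The main obstacle is the bookkeeping: roughly ten sub-cases in total, each requiring one to compose the $D_Q$-sesquilinear isomorphism from $B$ with the antilinear automorphism induced by $\eps$ (and with the $\mu^{ij}$ when present) to read off the correct hermitian structure and involution on the descendant's representative space. The individual identifications are the same kinds of manipulations done in the proofs of the previous theorems, and no new ideas are required; once the case table is laid out, each verification proceeds as in cases 1--3 of Theorem \ref{CompDesO_OtO}. Finally, the general case follows immediately by taking products over orbits, as in all the preceding theorems of this section.
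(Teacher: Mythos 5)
Your strategy is the same as the paper's: derive $P\sim inv(P)\sim \tau(P)\sim P^*$, decompose $V$ according to the orbits of the Klein four-group $\langle inv,\tau\rangle$ on the irreducible factors of $P$, classify orbits by their stabilizer (one singleton type, three two-element types, one four-element type), and on each block use the pairings $V_{Q,x}^{ij}\cong (V_{Q,x}^{i,1-j})^*$ together with the involutions $\mu^{ij}$ and the action of $\eps$ to read off the descendant. The paper's Cases 1--8 are exactly this case table, with the two-element orbit stabilized by $inv$ and the singleton orbit each splitting into two sub-cases according to whether $x$ acts as an involution on the block.

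Two of your specific claims need correction, one of them substantive. First, $\eps$ does not permute the $\langle inv,\tau\rangle$-orbits: since $\eps x\eps=x^{-1}$, $\eps$ carries $Ker(P_i(x))$ to $Ker(inv(P_i)(x))$, i.e.\ it acts on the set of factors by the permutation already induced by $inv$, so the orbit subspaces are automatically $\eps$-invariant (which is precisely what makes the product decomposition legitimate) and no ``combined orbits'' are needed. Second, type (v) does not come from singleton orbits. For a singleton orbit ($P$ irreducible), $B$ induces a $(D_P,\mu^{01})$-hermitian structure on $V_{P,x}^{00}$, and $\mu^{01}$ restricts to $\tau$ on $D$ and hence is never trivial; so the centralizer on such a block is always a genuine unitary group, and singleton orbits yield only types (vi) and (vii). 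Type (v) arises instead from the two-element orbit $\{Q,Q^*\}$ with $Q\sim inv(Q)$: there $B$ pairs the two isotropic blocks, so $U(V)_x\cong \Delta GL(V_{Q,x}^{00})$ is a general linear group, and in the degenerate sub-case where $x=x^{-1}$ on the block (so $D_Q=D$ and $\mu^{10}$ is trivial) $\eps$ acts $D_Q$-linearly on $V_{Q,x}^{00}$ and gives $(GL(W),GL(W)_\eps)$; this is the paper's Case 5. With that reassignment your case table matches the paper's and the rest of the plan goes through.
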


For the proof of this theorem we will need the following
straightforward lemma.

\begin{lemma}
Let $(V,B)$ be a hermitian space over $(D,\tau)$.\\
Let $\eps \in U(V)$ be an element of order 2. Let $x \in U(V)$
such that $\eps x \eps = x^{-1}$. Let $Q$ be an irreducible
polynomial. Then $\eps$ gives an $D_Q$- linear isomorphism
$V_{Q,x}^{ij} \cong V_{Q,x}^{1-i,j}$.
\end{lemma}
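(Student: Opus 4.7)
The plan is to derive all four containments $\eps(V_{Q,x}^{ij}) \subseteq V_{Q,x}^{1-i,j}$ from the single identity $\eps P(x) \eps = P(x^{-1})$ valid for every $P \in D[\xi]$, which itself follows from $\eps x \eps = x^{-1}$ combined with $\eps^2 = 1$. Concretely, for $v \in V_{Q,x}^{00} = \ker Q(x)$, one computes
\[
inv(Q)(x)(\eps v) = x^{\deg Q}\, Q(x^{-1})(\eps v) = x^{\deg Q}\, \eps\, Q(x)v = 0,
\]
so $\eps v \in \ker inv(Q)(x) = V_{Q,x}^{10}$. The same computation, applied with $Q$ replaced by $Q^*$, $inv(Q)$, or $\tau(Q)$, handles the three remaining cases once one invokes the polynomial identities $inv(Q^*) = \tau(Q)$, $inv(\tau(Q)) = Q^*$, and $inv(inv(Q)) = Q$.

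Next I would verify the $D_Q$-linearity. The generator $\xi$ acts on $V_{Q,x}^{0j}$ by $x$ and on $V_{Q,x}^{1j}$ by $x^{-1}$, so as $\eps$ exchanges the first index $i$, the compatibility to be checked is precisely the relation $\eps x = x^{-1} \eps$ already used above. For the underlying action of $D$, the $\tau$-twist is controlled by the second index $j$, which is preserved by $\eps$; since $\eps \in U(V)$ is $D$-linear, the twisted $D$-structures on source and target coincide and $\eps$ intertwines them.

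Applying $\eps$ once more and invoking $\eps^2 = 1$ produces the inverse map, so each of the inclusions is in fact a bijection, and we obtain the required $D_Q$-linear isomorphism. There is no genuine obstacle here beyond organising the four-index bookkeeping: only the first index flips while the second, which records the $\tau$-twist, is preserved. This is what makes the lemma straightforward, and it parallels exactly the analogous computation already carried out for $\widetilde{\tau}$ in the $(U(V_D),O(V))$ case.
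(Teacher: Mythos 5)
Your argument is correct and is exactly the computation the paper has in mind: the paper states this lemma without proof as ``straightforward,'' and your derivation of $\eps(V_{Q,x}^{ij})\subseteq V_{Q,x}^{1-i,j}$ from $inv(Q)(x)=x^{\deg Q}Q(x^{-1})$ and $\eps P(x)\eps=P(x^{-1})$ mirrors the proof the paper does give for the analogous $\widetilde{\tau}$-lemma in the $(U(V_D),O(V))$ section. The checks of $D_Q$-linearity (the $\xi$-action intertwined via $\eps x=x^{-1}\eps$, the $\tau$-twist unchanged since $j$ is preserved) and of bijectivity via $\eps^2=1$ are all as intended.
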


\begin{proof}[Proof of Theorem \ref{CompDesU_UtU}]
Let $x\in U(V)^{\sigma}$ be a semi-simple element and let $P$ be
its minimal polynomial.

Note that the minimal polynomial of $x^*$ is $P^*$ and hence $P^*$
is proportional to $P$. Since $x\in U(V)^{\sigma}$, we have
$x^{-1} = \eps x \eps$ and hence its minimal polynomial is $P$.
Hence $P$ is proportional to $inv(P)$. We will now discuss 7
special cases and then deduce the general case from
them.\\\\
Case 1. $P=QQ^*inv(Q)\tau(Q)$, where $Q$ is an irreducible
polynomial.\\
Note that $GL(V)_x \cong \prod _{ij} GL(V_{Q,x}^{ij}) \cong
GL(V_{Q,x}^{00})^4 $. This identifies $U(V)_x$ with a diagonal
$\Delta GL(V_{Q,x}^{00})^2 < GL(V_{Q,x}^{00})^4$ and
$(U(V)_{\eps})_x$ with a diagonal $\Delta
GL(V_{Q,x}^{00}) < GL(V_{Q,x}^{00})^4$. \\\\
Case 2. $P=Qinv(Q)$, where $Q$ is an irreducible polynomial and
$Q^*=Q$.\\
Note that $GL(V)_x \cong \prod _{i} GL(V_{Q,x}^{i0}) \cong
GL(V_{Q,x}^{00})^2$. Note also that in this case $V_{Q,x}^{i0}$
and $V_{Q,x}^{i1}$ are identical as sets and $F$-vector spaces but
the actions of $D_Q$ on them differ by a twist by $\mu^{01}$. Now
the isomorphism $V_{Q,x}^{i0} \cong (V_{Q,x}^{i1})^*$ gives a
$(D_Q, \mu^{01})$-hermitian structure on $V_{Q,x}^{i0}$.
Therefore, $U(V)_x \cong U(V_{Q,x}^{00})\times U(V_{Q,x}^{10})$.
Note that $\eps$ gives an isomorphism of $(D_Q,
\mu^{01})$-hermitian spaces between $V_{Q,x}^{00}$ and
$V_{Q,x}^{01}$. Hence $$U(V)_x \cong U(V_{Q,x}^{00})^2 \text{ and
}
(U(V)_{\eps})_x \cong \Delta U(V_{Q,x}^{00}) < U(V_{Q,x}^{00})^2.$$\\
Case 3. $P=Qinv(Q)$, where $Q$ is an irreducible polynomial and
$Q^* = inv(Q)$.\\
Note that $GL(V)_x \cong \prod _{i} GL(V_{Q,x}^{i0}) \cong \prod
_{j} GL(V_{Q,x}^{0j})$.

Since $B$ defines a non-degenerate pairing $V_{Q,x}^{00} \cong
(V_{Q,x}^{01})^*$ and $V_{Q,x}^{0i}$ are isotropic, we have
$$U(V)_x \cong \Delta GL(V_{Q,x}^{00}) < (GL(V_{Q,x}^{00}))^2.$$
Note that in this case $V_{Q,x}^{ij}$ and $V_{Q,x}^{1-i,1-j}$ are
identical as sets and as $F$-vector spaces but the action of $D_Q$
on them differs by a twist by $\mu^{11}$.

Now, compose the isomorphism $V_{Q,x}^{00} \cong V_{Q,x}^{10}$
given by $\eps$ with the isomorphism $V_{Q,x}^{10} \cong
(V_{Q,x}^{11})^*$. This gives a $(D_Q,\mu^{11})$ unitary structure
on $V_{Q,x}^{00}$. Similarly we get a unitary structure on
$V_{Q,x}^{10}$. Finally, $\eps$ gives an isomorphism $V_{Q,x}^{00}
\cong V_{Q,x}^{10}$ as unitary spaces and hence
$$(U(V)_{\eps})_x \cong  \Delta U(V_{Q,x}^{00}) < \Delta
GL(V_{Q,x}^{00}).$$\\
Case 4. $P=QQ^{*}$, where $Q$ is an irreducible polynomial, $Q=
inv(Q)$ and $x \neq x^{-1}$.\\
Note that $GL(V)_x \cong \prod _{j} GL(V_{Q,x}^{0j})$ and as
before
$$U(V)_x \cong  \Delta GL(V_{Q,x}^{00}) < (GL(V_{Q,x}^{00}))^2.$$
In this case $V_{Q,x}^{0j}$ and $V_{Q,x}^{1j}$ are identical as
sets and as $F$-vector spaces but the action of $D_Q$ on them
differs by a twist by $\mu^{10}$. Hence $\eps$ gives a
$(D_Q,\mu^{10})$ anti-linear automorphism of $V_{Q,x}^{0j}$. Let
$W_j:=(V_{Q,x}^{0j})^{\eps}$. This is a linear space over
$(D_Q)^{\mu^{10}}$. Therefore,
$$(U(V)_{\eps})_x \cong \Delta GL(W_0) <  \Delta GL(V_{Q,x}^{00}).$$\\
Case 5. $P=QQ^{*}$, where $Q$ is an irreducible polynomial, $Q=
inv(Q)$ and $x = x^{-1}$.\\
As in the previous case, $$GL(V)_x \cong \prod _{j}
GL(V_{Q,x}^{0j}) \text{ and } U(V)_x \cong  \Delta
GL(V_{Q,x}^{00}) < (GL(V_{Q,x}^{00}))^2.$$
In this case $D_Q = D$ and $\mu^{10}$ is trivial. Hence
$V_{Q,x}^{0j}$ and $V_{Q,x}^{1j}$ are identical as $D_Q$-linear
spaces.

Also, $\eps$ gives a $D_Q$-linear automorphism of $V_{Q,x}^{0j}$.
So we can interpret $\eps$ as an element in $GL(V_{Q,x}^{00})$.
Therefore,
$$(U(V)_{\eps})_x \cong \Delta (GL(V_{Q,x}^{00}))_{\eps} <  \Delta
GL(V_{Q,x}^{00}).$$\\
Case 6. $P$ is irreducible and $x \neq x^{-1}$\\
In this case $GL(V)_x \cong GL(V_{P,x}^{00})$. Also,
$V_{P,x}^{00}$ and $V_{P,x}^{01}$ are identical as $F$-vector
spaces but the action of $D_P$ on them differs by a twist by
$\mu^{01}$. Again, the isomorphism $V_{P,x}^{00} \cong
(V_{P,x}^{01})^*$ gives a $(D_P,\mu^{01})$ hermitian structure on
$V_{P,x}^{00}$ and
$$U(V)_x \cong U(V_{P,x}^{00}).$$
Note that $V_{P,x}^{00}$ and $V_{P,x}^{10}$ are identical as
$F$-vector spaces but the action of $D_P$ on them differs by a
twist by $\mu^{10}$. Hence, $\eps$ gives a $(D_P,\mu^{10})$
anti-linear automorphism of $V_{P,x}^{00}$. Denote $W:=
(V_{P,x}^{00})^{\eps}$. It is a linear space over
$(D_P)^{\mu^{10}}$. It has a
$((D_P)^{\mu^{10}},\mu^{01}|_{(D_P)^{\mu^{10}}})$ hermitian
structure. Now
$$(U(V)_{\eps})_x \cong U(W) < U(V_{P,x}^{00}).$$\\
Case 7. $P$ is irreducible and $x = x^{-1}$.\\
Again, $$GL(V)_x \cong GL(V_{P,x}^{00}) \text{ and } U(V)_x \cong
U(V_{P,x}^{00}).$$ In this case $D_P=D$ and $\mu^{01}=\tau$. Also,
$\eps$ commutes with $x$ and hence $\eps \in U(V)_x \cong
U(V_{P,x}^{00})$. Hence
$$(U(V)_{\eps})_x \cong U(V_{P,x}^{00})_{\eps} < U(V_{P,x}^{00}).$$\\
Case 8. General case\\
Let $P = \prod_{i \in I} P_i$ be the decomposition of $P$ to
irreducible multiples. Since $P$ is proportional to $inv(P)$,
every $P_i$ is proportional to $P_{s_1(i)}$ for some permutation
$s_1$ of $I$ of order $\leq 2$. Since $P$ is proportional to
$P^*$, every $P_i$  is proportional to some $P_{s_2(i)}$. This
gives rise to an action of $\Z / 2\Z \times \Z / 2\Z$ on $I$.

Let $I = \bigsqcup I_{\alpha}$ be the decomposition of $I$ to
orbits of this action. Denote $V_{\alpha}:=Ker(\prod_{i \in
\alpha} P_i(x))$. Clearly $V = \bigoplus V_{\alpha}$ and
$V_{\alpha}$ are orthogonal to each other and $\eps$-invariant.
Hence the pair $(U(V)_x,(U(V)_{\eps})_x)$ is a product of pairs
from the first 7 cases.
\end{proof}

\subsection{Genealogical trees of the symmetric pairs considered in this paper} \label{SecDiag}
$ $\\
The following diagrams sum up the results of this section.

An arrow $"(G_1,H_1) \to (G_2,H_2)"$ means that pairs of type
$(G_1,H_1)$ may have descendants with factor of the type
$(G_2,H_2)$. We will not draw the obvious arrows $"(G,H) \to
(G,H)"$ and when we draw $"(G_1,H_1) \to (G_2,H_2) \to (G_3,H_3)"$
we mean also $"(G_1,H_1) \to (G_3,H_3)"$.

{\small
$$\framebox{\parbox{470pt}{

%
%
%
\xymatrix{
 & \framebox{\parbox{80pt}{$ \quad \,\, (U(V),U(V)_{\eps})$}}\ar@{->}[d]\ar@{->}[dl]\ar@{->}[ddr]\ar@{->}[ddrr] & &
\\
 \framebox{\parbox{64pt}{$(GL(V),U(V))$}}\ar@{->}[d] & \framebox{\parbox{80pt}{$\,\,\,\, (GL(V),GL(V)_{\eps})$}}\ar@{->}[d]\ar@{->}[dl] & &\\
\framebox{\parbox{115pt}{$(GL(V)\times GL(V), \Delta GL(V))$}} &
\framebox{\parbox{80pt}{$(GL(V)_{E/F},GL(V))$}} &
\framebox{\parbox{67pt}{$(U(V)_{E/F}, U(V))$}} &
\framebox{\parbox{92pt} {$(U(V) \times U(V), \Delta U(V))$}}}
$ $\\
\text{Here }V\text{ is a linear or hermitian space over some
finite field extension of }F \text{ and }E \text{ is} \text{some
quadratic extension of } F. }}$$
%
%
\framebox{\parbox{475pt}{
$$ \xymatrix{ \framebox{\parbox{63pt}{$(O(V),O(V)_{\eps})$}}\ar@{->}[d]\\
 \framebox{\parbox{63pt}{$(U(V_E),O(V))$}}\ar@{->}[d]\\
\framebox{\parbox{63pt}{$(GL(V), O(V))$}}}$$
$ $\\
\text{Here }V\text{ is a quadratic space over some finite field
extension of }F \text{ and }E \text{ is} \text{some quadratic
extension of } F. }}}


\end{document}